\DeclarePairedDelimiter\ceil{\lceil}{\rceil}
\DeclarePairedDelimiter\floor{\lfloor}{\rfloor}
\newcommand{\p}[1]{\left(#1\right)}
\newcommand{\s}[1]{\left[#1\right]}
\newcommand{\set}[1]{\left\{#1\right\}}
\newcommand{\pres}[1]{\left\langle#1\right\rangle}
\newcommand{\inv}{^{-1}}
\newcommand{\bigmid}{\ \middle|\ }
\newcommand{\E}{\mathbb{E}}
\newcommand{\ca}{\mathcal}
\newcommand{\bbr}{\mathbb{R}}
\newcommand{\bbz}{\mathbb{Z}}
\newcommand{\bbp}{\mathbb{P}}
\newcommand{\tmix}{t_{\text{mix}}}
\DeclareMathOperator{\Var}{Var}
\DeclareMathSymbol{\derange}{\mathord}{operators}{"3C}
\newcommand{\tpitchfork}{%
  \vbox{
    \baselineskip\z@skip
    \lineskip-.52ex
    \lineskiplimit\maxdimen
    \m@th
    \ialign{##\crcr\hidewidth\smash{$-$}\hidewidth\crcr$\pitchfork$\crcr}
  }%
}
\newtheorem*{rep@theorem}{\rep@title}
\newcommand{\newreptheorem}[2]{%
\newenvironment{rep#1}[1]{%
 \def\rep@title{#2 \ref{##1}}%
 \begin{rep@theorem}}%
 {\end{rep@theorem}}}
\newtheorem{thm}{Theorem}[section]
\newtheorem{prop}[thm]{Proposition}
\newtheorem{cor}[thm]{Corollary}
\newtheorem{lem}[thm]{Lemma}
\newtheorem*{prop*}{Proposition}
\newtheorem*{cor*}{Corollary}
\newtheorem*{lem*}{Lemma}
\newtheorem*{thm*}{Theorem}
\theoremstyle{definition}
\newtheorem*{claim*}{Claim}
\newtheorem*{conj*}{Conjecture}
\theoremstyle{definition}
\newtheorem*{defn*}{Definition}
\newtheorem*{rmk*}{Remark}
\begin{document}

\title{Mixing Time and Cutoff for the k-SEP}
\author{Eyob Tsegaye}
\address{Department of Mathematics, Princeton University, Princeton, NJ 08540}
\email{eyob@princeton.edu}
\maketitle


\begin{abstract}
    We investigate the mixing time of the capacity $k$ simple exclusion process (also called the partial exclusion process) of Schultz and Sandow with $m$ particles on a segment of length $N$. We show that the $k$-SEP exhibits cutoff at time $\frac{1}{2k\pi^2}N^2\log m$. We also introduce a related complete multi-species process that we call the $S_{k,N}$ shuffle and show that this process exhibits cutoff at time $\frac{1}{2k\pi^2}N^2\log (kN)$. This extends the celebrated result of Lacoin, which determined the mixing time of the symmetric simple exclusion process on a segment of length $N$ and the adjacent transposition shuffle, and proved cutoff in both.
\end{abstract}


\section{Introduction}

The exclusion process is a widely studied model of interacting particles on a finite or infinite graph. In the process, particles move from site to site in the graph along the edges with pre-determined rates, but there are fermionic interactions between the particles, so that no two particles can occupy the same site. That is, if one particle tries to move to a vertex that is occupied, it bounces back and returns to the vertex it was previously occupying. In 1994, Schutz and Sandow \cite{schutz-sandow} introduced a generalization of the exclusion process which they called partial exclusion. In this setting, each site can hold at most $k \in \bbz_+$ particles, and the particle jump rate from site $i$ to site $j$ is proportional to $a_i(k-a_j)$ where $a_i$ denotes the number of particles at site $i$. Since then, a wealth of study has gone into establishing duality in this model and variations of it (\cite{cggr-13}\cite{frs-20}\cite{cgrs-14}\cite{acr-20}\cite{cgr-19}\cite{gkrv-09}) as well as studying limits, equilibria, and fluctuations (\cite{am-23}\cite{fgjs-23}\cite{frs-21}\cite{cs-21}\cite{cgr-20}). Throughout the literature, the name of this model varies, with some calling it the partial exclusion process and some calling it the $k$-SEP (or $k$-ASEP if appropriate). We will use the latter to emphasize the dependence of our results on $k$. We should also mention another generalization of the exclusion process to bounded capacity proposed by Keisling in 1998 \cite{keisling} and studied further by Seppalainen \cite{seppalainen}. In this model, the particle jump rate from site $i$ to site $j$ is constant so long as $i$ is not empty and $j$ is not at capacity.

In this paper, we study the total variation mixing time of the $k$-SEP model of Schutz and Sandow on the path of length $N$ with $m$ particles. We establish cutoff for the model at time $\frac{1}{2k\pi^2}N^2\log m$. Like most proofs of total variation cutoff, this requires a detailed analysis of the chain.

We now formally define the $k$-SEP. Given positive integers $k,N$, and $m\leq kN$, let 
$$\Omega_{k,N,m} := \set{\gamma: [N] \to \{0, \ldots, k\} \bigmid \sum_{x=1}^N \gamma(x) = m}, $$
where we use the notation $[n] := \{1, \ldots, n\}$. The function $\gamma$ tells us how many particles are at each site and thus we call $\gamma$ a configuration.

The process on this state space is as follows. For each $x \in [N]$, define $\tau_x^+: \Omega_{k,N,m} \to \Omega_{k,N,m}$ to be the transformation that moves a particle at site $x$ to the right (if possible), and let $\tau_x^-$ be the transformation that moves a particle at site $x$ to the left (if possible). That is, if $\gamma' = \tau_x^+(\gamma)$, then
$$ \gamma'(y) = \begin{cases}
    \gamma(x)-1 & y=x < N,\ \gamma(x+1)<k \\
    \gamma(x+1)+1 & y=x+1 \leq N,\ \gamma(x+1) < k \\
    \gamma(y) & \text{otherwise.}
\end{cases} $$
The transformation $\tau_x^-$ is defined analogously. The generator $\ca{L}$ of the chain is given by 
$$ (\ca{L}_{\text{kSEP}}f)(\gamma) = \sum_{x=1}^{N} \s{p_x\p{f(\tau_x^+ \gamma)-f(\gamma)}+q_x\p{f(\tau_x^- \gamma)-f(\gamma)}}$$
where the rates of transition, $p_x$ and $q_x$, are defined as
$$ p_x = \gamma(x)(k-\gamma(x+1)), \qquad q_x = \gamma(x)(k-\gamma(x-1)). $$

\begin{figure}
    \centering
    \includegraphics[scale=0.5]{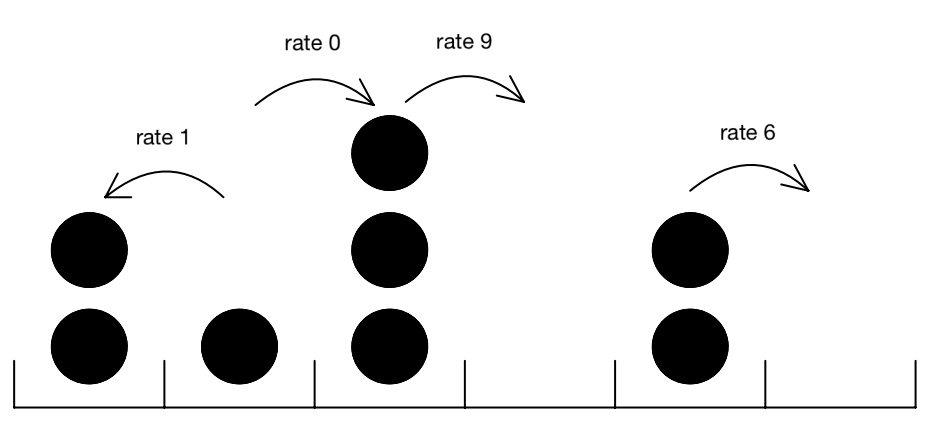}
    \caption{One particular configuration of the $k$-SEP with state space $\Omega_{3,6,8}$. The right-pointing arrows have the rates $p_x$ written above them while the left-pointing arrows have $q_x$.}
    \label{fig:ksep-chain}
\end{figure}

In words, each particle at site $x$ moves to the left with rate $k-\gamma(x-1)$ and to the right with rate $k-\gamma(x+1)$ (see Figure \ref{fig:ksep-chain}). This chain is reversible with respect to the distribution given by
$$\mu^{N,k,m}(\gamma) = \binom{Nk}{m}\inv \prod_{x=1}^N \binom{k}{\gamma(x)}.$$
Note that when $k=1$, this is just the classical symmetric simple exclusion process.

Our first result is that cutoff for the $k$-SEP occurs at time $\frac{1}{2k\pi^2}N^2\log m$. More formally, let $P^{k,N,m}_t$ be the law of the chain associated with the generator $\ca{L}_{\text{kSEP}}$. We define
\begin{align*}
   d^{k,N,m}(t) &:= \max_x \|P^{k,N,m}_t(x, \cdot) - \mu^{k,N,m}\|_{TV} = \max_{x, A} |P_t^{k,N,m}(x, A)-\mu^{k,N,m}(A)| \\
   \tmix^{k,N,m}(\epsilon) &:= \inf_{t\geq 0} \{d^{k,N,m}(t) \leq \epsilon\}.
\end{align*}
Our result is then:
\begin{thm}\label{ksep-cutoff-thm}
    Suppose $m = m(N) \to \infty$ and $m \leq kN/2$. For all $\epsilon \in (0,1)$, we have
    $$\lim_{N\to\infty} \frac{\tmix^{k,N,m}(\epsilon)}{\frac{1}{2k\pi^2}N^2\log m} = 1.$$
\end{thm}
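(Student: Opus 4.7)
The plan is to follow Lacoin's strategy for the simple exclusion process (the $k=1$ case), tracking the $k$-dependence uniformly throughout. The central spectral observation is that, for any function of the form $f(\gamma)=\sum_{x=1}^N g(x)\gamma(x)$, direct summation by parts on the generator yields
$\ca{L}_{\text{kSEP}}f(\gamma) = k\sum_x (\Delta_N g)(x)\gamma(x)$,
where $\Delta_N$ denotes the discrete Laplacian on $[N]$ with Neumann boundary conditions. In particular, $\phi_1(\gamma):=\sum_x \cos(\pi(x-1/2)/N)\gamma(x)$ is an eigenfunction of $\ca{L}_{\text{kSEP}}$ with eigenvalue $-2k(1-\cos(\pi/N))\sim -k\pi^2/N^2$, which already identifies the mixing scale $N^2/(k\pi^2)$.

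For the lower bound, I would apply Wilson's eigenfunction method using $\phi_1$. Starting from the left-packed configuration $\gamma_0$, a direct computation gives $\phi_1(\gamma_0)\asymp m$ for $m\leq kN/2$, and from the explicit product-binomial form of $\mu^{k,N,m}$ one obtains $\Var_{\mu^{k,N,m}}\phi_1 = O(m)$. The carré du champ of $\phi_1$ satisfies $\Gamma(\phi_1,\phi_1)(\gamma) = O(km/N^2)$ pointwise, which by the standard variance evolution for reversible eigenfunctions gives $\Var_{\gamma_0}(\phi_1(X_t)) = O(m)$ uniformly in $t$. Combining with Chebyshev's inequality then yields $\tmix^{k,N,m}(\epsilon)\geq \frac{1}{2|\lambda_1|}\log(m/C_\epsilon) = \frac{N^2\log m}{2k\pi^2}(1-o(1))$.

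For the upper bound, I would construct a graphical coupling via Poisson clocks on edges, producing a monotone coupling that preserves the partial order $\gamma\leq\gamma'$ defined through the height functions $h_\gamma(x):=\sum_{y\leq x}\gamma(y)$. The mixing time is then controlled by the coupling time between the extremal states $\gamma_{\max}$ (packed-left) and $\gamma_{\min}$ (packed-right). Let $A(t):=\sum_x(h_{\gamma_{\max}}(t,x)-h_{\gamma_{\min}}(t,x))\geq 0$; the two coupled chains have merged when $A(t)=0$, and by integrality $\bb{P}(A(t)>0)\leq\E[A(t)]$. Decomposing the initial height difference in the Neumann Fourier basis, each mode decays at rate $k\pi^2 j^2/N^2$, with the dominant ($j=1$) mode carrying mass of order $m$.

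The hard part is obtaining the sharp prefactor $\frac{1}{2k\pi^2}$: the naive first-moment bound $\E[A(t)]\lesssim m e^{-k\pi^2 t/N^2}$ only yields $\frac{1}{k\pi^2}$. Closing the factor-of-two gap requires a refined second-moment analysis of the surviving height difference combined with mode-by-mode control showing that higher-frequency contributions are negligible at the critical time scale, and a concentration argument to pass from $\E[A]=o(1)$ to coalescence of the extremal chains. This parallels the most delicate part of Lacoin's proof; because the $k$-dependence enters uniformly through the single factor $k$ in the generator's action on linear observables, the structural argument extends to the $k$-SEP once this factor is tracked carefully through all quantitative estimates.
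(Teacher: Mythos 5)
Your lower bound sketch is essentially the paper's argument: same eigenfunction $\phi_1=f_1$, same eigenvalue $-2k(1-\cos(\pi/N))$, same Wilson-method variance bound, and the same computation $\phi_1(\gamma_0)\asymp m$ on the packed-left state. That part is fine.

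The upper bound, however, has a genuine gap. You correctly set up the monotone grand coupling, identify the extremal states, and introduce the area $A(t)=\sum_x(\eta^\land_t(x)-\eta^\lor_t(x))$, with $\bb{P}(A(t)>0)\le\E[A(t)]$. But the decisive step — getting from the first-moment prefactor $\frac{1}{k\pi^2}$ (or worse, since $\E[A(t)]\lesssim mN\,e^{-k\pi^2t/N^2}$, with an extra $N$ from the sum over $x$) down to the sharp $\frac{1}{2k\pi^2}$ — is described only as ``a refined second-moment analysis\ldots combined with mode-by-mode control\ldots and a concentration argument.'' That is not what the proof actually requires, and as phrased it is not a proof strategy. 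The paper closes the gap by a three-stage time decomposition built around the \emph{censoring inequality}: (i) censor to decompose the chain into $R$ independent sub-chains and run the rough $L^\infty$ bound on each to make the law constant on semi-skeletons by $t_1$; (ii) run uncensored dynamics to $t_2=(1+\delta/2)\tfrac{N^2}{2k\pi^2}\log m$ and show, using a local CLT for the skeleton, that $P^\land_{t_2}$ and $P^\lor_{t_2}$ are close to $\mu$ — this is precisely where the factor $\tfrac12$ appears, because reaching the equilibrium fluctuation scale $N\sqrt{m}$ costs $e^{-\lambda t}\approx m^{-1/2}$, not $m^{-1}$; (iii) between $t_2$ and $t_3$, exploit that the chains are now near equilibrium to lower-bound the number of active coordinates (the ``bad configuration'' sets $\ca{H}_1,\ca{H}_2$), treat $A(t)$ as a supermartingale, and compare it to a lazy random walk via stopping times $\tau_i'$ at geometric area thresholds to show $A$ hits $0$ in $o(N^2\log m/k)$ additional time.

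The key missing ingredient in your outline is the censoring inequality and the resulting ``reach-equilibrium-first'' step. The area-decay argument is not a moment bound on $A(t)$ at all — it needs the equilibrium control to guarantee enough active sites so that $d(t)+u(t)$ is bounded below, and it needs the decomposition into $R$ independent blocks to get that equilibrium control in the first place. A second-moment bound on $A(t)$ alone cannot see the critical $N\sqrt m$ threshold, and ``mode-by-mode control'' does not help once the dynamics become nonlinear near coalescence. Without these pieces, the upper bound at the sharp constant does not close.

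Separately, a minor point: your claimed bound $\E[A(t)]\lesssim m\,e^{-k\pi^2t/N^2}$ drops a factor of $N$ from summing the height-function bound over the $N$ sites, so even the ``naive'' constant you quote is optimistic; the true naive bound gives roughly $\frac{1}{k\pi^2}\log(mN)$, which is worse than $\frac{1}{k\pi^2}\log m$ unless $m$ is already polynomial in $N$.
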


As the limiting behavior of $\tmix^{k,N,m}(\epsilon)$ is independent of $\epsilon$, this is precisely saying that cutoff occurs in this process. For an overview of cutoff, see \cite{levin-peres-wilmer} chapter 18 and \cite{diaconis-cutoff}.

In studying exclusion processes, and interacting particle systems more generally, one typically has the particles be indistinguishable. However, there are a variety of ``multi-species" models which distinguish different classes of particles (see e.g. \cite{casini-multispecies}, \cite{aggarwal-colored} for examples and more references). In the case where each particle is distinguished (we call this ``complete" multi-species) and when taken on a finite set of sites, the model often corresponds to some natural card shuffle. For example, the complete multi-species SEP on a path would correspond to the adjacent transposition shuffle while the complete multi-species SEP on a star graph would correspond to the star transposition shuffle. We could even make these shuffles asymmetric in a similar sense to exclusion processes and analyze those, see for example \cite{labbe-lacoin}, \cite{lingfu}. Often these card shuffles shed light on the dynamics of the exclusion processes they are related to, and of course the card shuffles are interesting in their own right. With this motivation, we introduce a complete multi-species analog of the $k$-SEP which we call the $S_{k,N}$ shuffle.

Fix a positive integer $k$. Let $\sigma: [N] \to \ca{P}([kN])$ be a map sending each element to a disjoint subset of size $k$, i.e. the inverse of a $k$-to-1 map $[kN] \to [N]$. If $k=1$, this is just a permutation, and we interpret this permutation as a deck of cards where $\sigma(x)$ gives the value of the card at position $x$ in the deck. Therefore, for general $k$, we call this a $k$-permutation and interpret this as a ``deck of packets," where each packet contains $k$ cards in an indistinguishable order. Notationally, we write this set as $S_{k,N}$, so that $S_{1,N} = S_N$, the symmetric group on $N$ elements. Furthermore, for $\sigma \in S_{k,N}$, we let $\sigma(x)_i$ denote the $i$th smallest number in $\sigma(x)$.

During the shuffle, each pair of cards in adjacent packets swaps which packets they are in with rate 1. More formally, for each $x \in [N-1]$ and $i,j \in [k]$, let $\tau_x^{i,j}: S_{k,N} \to S_{k,N}$ swap the $i$th smallest element of $\sigma(x)$ with the $j$th smallest element of $\sigma(x+1)$. That is, if $\sigma(x) = \{a_1,\ldots, a_k\}$ with $a_1 < \ldots < a_k$ and $\sigma(x+1) = \{b_1, \ldots, b_k\}$ with $b_1 < \ldots < b_k$, then $\sigma' := \tau_x^{i,j}(\sigma)$ is defined by $\sigma'(x) = \sigma(x) \cup \{b_j\} - \{a_i\}$ and $\sigma'(x+1) = \sigma(x+1) \cup \{a_i\} - \{b_j\}$, with $\sigma'(y) = \sigma(y)$ for all $y \neq x,x+1$. The generator of this chain is given by
$$ (\ca{L}_{\text{shuffle}} f)(\sigma) = \sum_{x=1}^{N-1}\sum_{i,j=1}^k \s{f(\tau_x^{i,j} \sigma)-f(\sigma)}, $$
so that each transformation $\tau_x^{i,j}$ occurs with rate 1 (see Figure \ref{fig:skn-shuffle}). This chain is reversible with respect to the uniform distribution $\mu^{N,k}(\sigma) = (k!)^N/(kN)!$.

\begin{figure}
    \centering
    \includegraphics[scale=0.5]{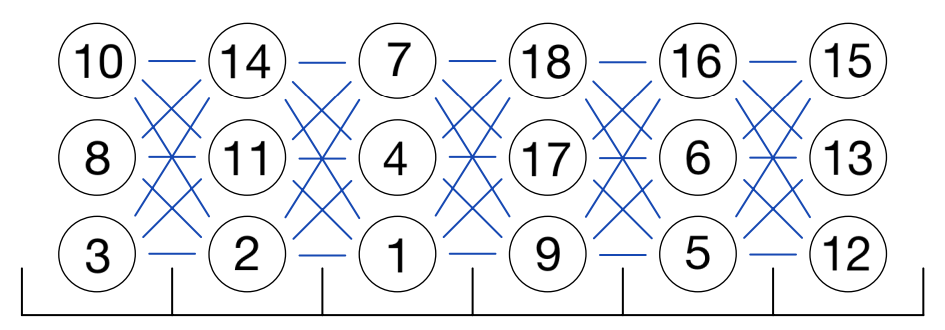}
    \caption{An example of an element $\sigma \in S_{3,6}$ (i.e. a 3-permutation). For example, $\sigma(3) = \{1,4,7\}$. Blue lines indicate rate 1 swaps in the $S_{k,N}$ shuffle. }
    \label{fig:skn-shuffle}
\end{figure}

Our second result is that cutoff for the $S_{k,N}$ shuffle occurs at time $\frac{1}{2k\pi^2}N^2\log kN$. To write this formally, let $P^{k,N}_t$ be the law of the chain associated with the generator $\ca{L}_{\text{shuffle}}$ and define $d^{k,N}(t)$ and $\tmix^{k,N}(\epsilon)$ analogously to $d^{k,N,m}(t)$ and $\tmix^{k,N,m}(\epsilon)$.

\begin{thm}\label{shuffle-cutoff-thm}
    For all $\epsilon \in (0,1)$, we have
    $$\lim_{N\to\infty} \frac{\tmix^{k,N}(\epsilon)}{\frac{1}{2k\pi^2}N^2\log(Nk)} = 1.$$
\end{thm}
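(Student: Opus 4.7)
The plan is to derive both bounds from Theorem \ref{ksep-cutoff-thm} via a projection principle. For each subset $A \subseteq [kN]$, define $\Pi_A : S_{k,N} \to \Omega_{k,N,|A|}$ by $\Pi_A(\sigma)(x) = |\sigma(x) \cap A|$. A direct check of generators shows that if $\sigma_t$ evolves under $\ca{L}_{\text{shuffle}}$, then $\Pi_A(\sigma_t)$ is a $k$-SEP in $\Omega_{k,N,|A|}$. Because knowledge of $\Pi_{[c]}(\sigma)$ for all $c \in [kN-1]$ determines $\sigma$ (via $\mathbf{1}[\sigma(x) \ni c] = \Pi_{[c]}(\sigma)(x) - \Pi_{[c-1]}(\sigma)(x)$), the $S_{k,N}$ shuffle is in a precise sense the simultaneous evolution of $kN-1$ coupled $k$-SEPs at every particle count.

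For the upper bound, fix $\epsilon > 0$ and set $t_+ = (1+\epsilon)\frac{N^2 \log(kN)}{2k\pi^2}$. Run two copies $\sigma_t, \sigma_t'$ of the shuffle under the grand coupling, in which each triple $(x; i, j)$ has a single rate-$1$ Poisson clock driving both copies; projecting through $\Pi_{[c]}$ gives two copies of the $k$-SEP with $c$ particles coupled by the analogous grand coupling. The quantitative mixing estimate underlying Theorem \ref{ksep-cutoff-thm} should give $d^{k,N,c}(t_+) = o((kN)^{-1})$ uniformly in $c$, which transfers (by a standard argument relating mixing to grand-coupling coalescence for reversible chains) to coalescence of the coupled $k$-SEPs by time $t_+$ with probability $1 - o((kN)^{-1})$. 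A union bound over $c$ then gives coalescence of the two shuffles with probability $1 - o(1)$, yielding $\tmix^{k,N}(\epsilon) \leq t_+$.

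For the lower bound, set $t_- = (1-\epsilon)\frac{N^2\log(kN)}{2k\pi^2}$ and let $f_1(x) = \cos(\pi(x-\tfrac12)/N)$, the top eigenfunction of the one-particle $k$-SEP (a rate-$k$ reflecting simple random walk on $[N]$), with eigenvalue $\lambda_1 = 2k(1-\cos(\pi/N)) \sim k\pi^2/N^2$. For each card $c$ define $M_c(\sigma) = f_1(x_c(\sigma))$, where $x_c(\sigma)$ is the packet containing $c$; the projection principle applied to $A = \{c\}$ gives $\ca{L}_{\text{shuffle}} M_c = -\lambda_1 M_c$. Let $\sigma_0$ be the identity $k$-permutation and set
\[
F(\sigma) = \sum_{c=1}^{kN} M_c(\sigma_0)\, M_c(\sigma),
\]
so that $F(\sigma_0) = k\sum_{x=1}^N f_1(x)^2 \sim kN/2$ and $\E_{\sigma_0}[F(\sigma_{t_-})] = e^{-\lambda_1 t_-} F(\sigma_0) \sim \tfrac12 (kN)^{(1+\epsilon)/2}$. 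An exchangeability-based computation gives $\Var_\mu(F) = O(kN)$, and bounding $\Var_{\sigma_0}(F(\sigma_{t_-}))$ reduces via $\ca{L}_{\text{shuffle}}(M_c M_{c'})$ for $c \neq c'$ to a two-particle $k$-SEP correlation estimate, which should also be $O(kN)$. Chebyshev's inequality applied to $\{F > \tfrac14 (kN)^{(1+\epsilon)/2}\}$ then separates $P^{k,N}_{t_-}(\sigma_0, \cdot)$ from $\mu^{N,k}$ in total variation, giving $d^{k,N}(t_-) \to 1$.

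The main obstacle is the quantitative control required in both directions: the upper bound needs a polynomial-rate mixing estimate strong enough to survive the $kN$-fold union bound (rather than just the qualitative $d^{k,N,c} \to 0$ that Theorem \ref{ksep-cutoff-thm} asserts), and the lower bound needs a two-card variance estimate whose only plausible route is through the spectral structure of the two-particle $k$-SEP. Both should be accessible given the detailed chain analysis that presumably appears in the proof of Theorem \ref{ksep-cutoff-thm}, so the cutoff result for the shuffle should require substantial but not conceptually new work beyond that theorem.
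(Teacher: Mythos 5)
Your lower bound sketch is conceptually the same tool the paper uses (Wilson's method), but the paper takes a shorter route: it observes that the $k$-SEP with $m = \lfloor kN/2 \rfloor$ particles is a projection of the shuffle, so $\tmix^{k,N}(\epsilon) \geq \tmix^{k,N,m}(\epsilon)$, and Theorem \ref{lower-bd-thm} immediately gives the bound with $\log m = (1+o(1))\log(kN)$. Your direct Wilson eigenfunction $F$ on the shuffle would also work, and the variance bound $\Var(F(\sigma_t)) = O(kN)$ does in fact follow from the same quadratic-variation martingale estimate the paper uses for $f_1$ (each jump changes $F$ by $O(1/N)$, the total clock rate is $O(k^2N)$, and integrating $e^{2(s-t)\lambda}$ gives $O(kN)$), but the route you sketch through two-card $k$-SEP correlations is heavier than necessary.

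The upper bound proposal has a genuine gap, and it is larger than the ``quantitative control'' you flag. Two things fail. First, the rate needed for the union bound is out of reach with the available estimates. Lemma \ref{wilson-upper-bound-lem} gives $d^{k,N,c}(t) \leq 10c\,e^{-t\lambda_{k,N}}$, and at $t_+ = (1+\delta)\frac{N^2 \log(kN)}{2k\pi^2}$ this is $\approx 10c\,(kN)^{-(1+\delta)/2}$; summing over $c \in [kN-1]$ gives order $(kN)^{(1+\delta)/2}$, which diverges. For $c$ near $kN/2$ the cutoff of the projected $k$-SEP occurs essentially at $t_+/(1+\delta)$, so at $t_+$ you are barely past cutoff with no polynomial rate, and Theorem \ref{ksep-cutoff-thm} itself is purely qualitative. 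Second, and more fundamentally, there is no ``standard argument relating mixing to grand-coupling coalescence for reversible chains.'' Total variation mixing bounds the coalescence probability from \emph{below}, not above, and converting a small TV distance into a small coalescence probability at the correct time scale is precisely the hardest part of the whole paper: the area-decay argument of Section 7 (supermartingale comparison with a random walk, avoidance of the bad configuration set $\mathcal{H}$, the stopping-time ladder $\tau_i$), done there for a single particle count with no explicit rate, let alone one uniform in $c$.

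The paper's actual upper bound (Section 6) is a completely different decomposition that sidesteps both issues. With $R = \lceil 1/\delta \rceil$ fixed, one writes $\|\nu - \mu\| \leq \|\nu - \tilde\nu\| + \|\widehat\nu - \widehat\mu\|$ (Lemma \ref{mix-decompose-lemma}), where the semi-skeleton $\widehat\sigma$ records only the coarse suit counts across $R$ vertical cuts, then alternates censoring across those cut points (which produces $R$ \emph{independent} sub-shuffles, each of which Lemma \ref{wilson-upper-bound-lem} mixes rapidly because $R$ is a constant) with uncensored evolution (which drives the $O(R^2)$-dimensional skeleton to equilibrium via a local CLT). The censoring inequality (Theorem \ref{censoring-thm}) justifies this decomposition. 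The key idea is to reduce the $kN$-dimensional coupling problem to a constant-dimensional one, not to take a union bound over $kN$ particle levels; the latter simply does not close.
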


Again, since the limiting behavior of $\tmix^{k,N}(\epsilon)$ is independent of $\epsilon$, this is precisely saying that cutoff occurs in the $S_{k,N}$ shuffle.

We briefly mention the wonderful result of Chen and Saloff-Coste \cite{chen-saloffcoste} which tells us that the discrete versions of these chains also exhibit cutoff and where their mixing time is.

Our methods are motivated by a celebrated result of Lacoin \cite{lacoin-at} which showed both that the symmetric exclusion process with $m$ particles on an interval of length $N$ exhibits cutoff at time $\frac{1}{2\pi^2}N^2\log m$ and that the adjacent transposition shuffle on $N$ cards exhibits cutoff at time $\frac{1}{2\pi^2}N^2\log N$. This used a novel approach of combining coupling arguments with a censoring inequality, the latter of which had been introduced by Peres and Winkler in the context of monotone spin systems \cite{peres-winkler}. Since then, this use of censoring and coupling arguments has been very fruitful in solving many other mixing time problems. See for example, the $p$-biased adjacent transposition shuffle \cite{labbe-lacoin}, the cyclic adjacent transposition shuffle \cite{nam-nestoridi}, the simple exclusion process with open boundaries \cite{gantert}, and the $S_k$ shuffle block dynamics \cite{priestley}.

We adapt the methods of \cite{lacoin-at} to this problem and establish cutoff for the $k$-SEP and the $S_{k,N}$ shuffle. The methods do not trivialize the problem, as the state space and transition rates are genuinely different, and for example lead to a slightly more delicate area decay argument in the upper bound for the $k$-SEP. One of the biggest differences, which will become clearer, is that because $S_{k,N}$ is not a group, we do not have a bijection between $k$-permutations of a prescribed semi-skeleton and some subgroup of $S_{Nk}$. However, lemma \ref{messy-action-lem} lets us maneuver around this.

We now provide an outline of the paper. In order to prove theorems \ref{ksep-cutoff-thm} and \ref{shuffle-cutoff-thm}, we first prove a lower bound for the mixing time of the $k$-SEP and the $S_{k,N}$ shuffle in section \ref{lower-bd-section}. This is done by explicitly computing eigenvalues of the $k$-SEP chain and using a continuous version of Wilson's method (see e.g. \cite{levin-peres-wilmer}, section 13.5) given in \cite{priestley}. The lower bound for the $S_{k,N}$ shuffle follows immediately by taking $m=\floor{kN/2}$. Afterwards, in sections 4 and 5 we develop machinery similar to \cite{lacoin-at} that will help in proving upper bounds for the two models. Finally, sections 6 and 7 will prove the upper bounds for the $S_{k,N}$ shuffle and $k$-SEP, respectively, using aforementioned techniques of combining censoring and coupling.


\section{Notation and Preliminaries}\label{preliminaries-section}

In this section we introduce notation that will be used throughout the paper and establish preliminary facts about standard objects. First let us make clear how the two chains introduced are related. We have a projection of Markov chains (in the sense of \cite{levin-peres-wilmer}, section 2.3.1) from the $S_{k,N}$ shuffle to the $k$-SEP via the map $\varphi_m: S_{k,N} \to \Omega_{k,N,m}$ by $\varphi_m(\sigma)(x) = |\sigma(x) \cap [m]|$. This projection will be very useful in giving us a quick way to relate properties between the two chains.

It will sometimes be useful to think of $k$-permutations as a collapsed version of permutations in $S_{Nk}$. In particular, define the map $\theta: S_{Nk} \to S_{k,N}$ by $(\theta(\sigma))(x) = \{\sigma((x-1)k+1), \ldots, \sigma(xk)\}$. This is a $(k!)^N$-to-1 surjective map onto $S_{k,N}$.

Often we will deal with a graphical interpretation of $k$-permutations and $k$-SEP configurations rather than the objects themselves. Define the height function $\tilde{\sigma}$ of $\sigma \in S_{k,N}$ by
$$ \tilde{\sigma}(x,y) = \sum_{z=1}^x |\sigma(z)\cap [y]| - \frac{xy}{N} $$
and similarly define the height function $\eta$ of $\gamma \in \Omega_{k,N,m}$ by
$$ \eta(x) = \sum_{z=1}^x |\gamma(z)\cap [m]| - \frac{xm}{N}. $$
\begin{figure}
    \centering
    \includegraphics[scale=0.5]{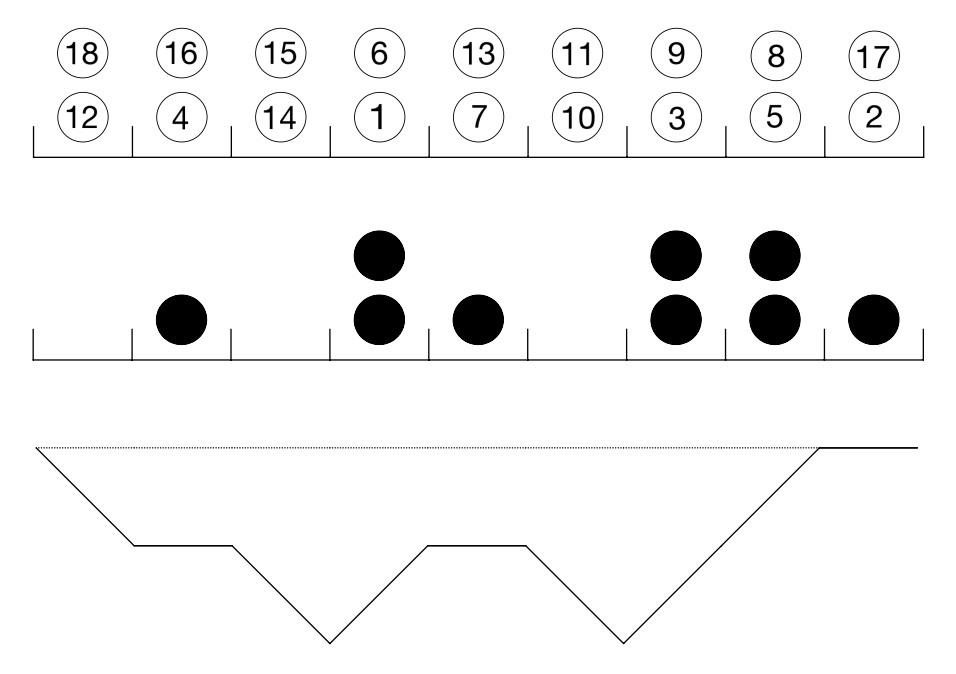}
    \caption{An example showcasing the projection $\varphi$. The top image is an element $\sigma \in S_{2,9}$, the middle is its projection $\varphi_9(\sigma) \in \Omega_{9,2,9}$, and the bottom is the height function $\eta$ associated to $\varphi_9(\sigma)$ (which is also $\tilde{\sigma}(\cdot, 9)$).}
    \label{fig:heightfn}
\end{figure}
We may also think of these as lattice paths for some particular lattice (see e.g. Figure \ref{fig:heightfn}). Note that these maps are injective, in particular
\begin{align*}
    1_{y\in\sigma(x)} &= \tilde{\sigma}(x,y)-\tilde{\sigma}(x-1,y)-\tilde{\sigma}(x,y-1)+\tilde{\sigma}(x-1,y-1) + \frac{1}{n} \\
    \gamma(x) &= \eta(x)-\eta(x-1)+\frac{m}{N}.
\end{align*}
We order the height functions by their partial ordering as real functions, that is $\tilde{\sigma} \leq \tilde{\tau}$ iff $\tilde{\sigma}(x,y) \leq \tilde{\tau}(x,y)$ for all pairs $x,y$. Since the map $\sigma \mapsto \tilde{\sigma}$ is injective, this induces a partial ordering on $S_{k,N}$. In $S_{k,N}$, we let $\mathbf{1}$ denote the maximal element, which is equal to $\theta(id)$ where $id$ is the identity permutation in $S_{Nk}$.

In $\Omega_{k,N,m}$, write $\land$ for the lattice path corresponding to the configuration where all particles are pushed to the left and $\lor$ for the lattice path corresponding to the configuration where all particles are pushed to the right (the symbols are motivated by how the lattice paths look). These correspond to the maximal and minimal configurations in $\Omega_{N,k,m}$. We can explicitly compute $\land(x) = \min(xk, m) - \frac{xm}{N}$.

For the rest of the paper, if not specified, limits are taken as $N\to\infty$ (this applies to little $o$ notation as well). When we say ``with high probability" or ``whp," we mean with probability tending to 1 as $N\to\infty$. Finally, for a Poisson process $\ca{T} = (\ca{T}_n)_{n\geq 0}$, we may say that a ``clock rings" at time $t$ if $t=\ca{T}_n$ for some $n$.


\section{Lower bound}\label{lower-bd-section}

In this section we show the following.

\begin{thm}\label{lower-bd-thm}
    There is a constant $C$ such that for any $\epsilon > 0$ and $m \leq Nk/2$,
    $$\tmix^{N,k,m}(\epsilon) \geq \frac{1}{2k\pi^2}N^2\log m+\frac{CN^2}{k}\log\p{\frac{1}{1-\epsilon}}+o(N^2).$$
\end{thm}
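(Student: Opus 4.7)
The plan is to apply Priestley's continuous-time version of Wilson's method \cite{priestley} to an explicit Fourier eigenfunction of $\mathcal{L}_{\text{kSEP}}$. First I would diagonalize the generator on linear functionals: for any sequence $(a_x)$ on $[N]$ extended by the Neumann convention $a_0 := a_1$ and $a_{N+1} := a_N$, one checks directly that $f(\gamma) := \sum_x a_x \gamma(x)$ satisfies
\[ \mathcal{L}_{\text{kSEP}} f(\gamma) = k\sum_{x=1}^N \gamma(x)(a_{x-1} - 2a_x + a_{x+1}). \]
The key point in the expansion is that the quadratic-in-$\gamma$ contributions from the rightward rate $p_x$ at edge $(x,x+1)$ and the leftward rate $q_{x+1}$ pair up and cancel as $\gamma(x)\gamma(x+1)[(a_{x+1}-a_x) + (a_x - a_{x+1})] = 0$. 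Taking $a_x = \cos(\pi(x-\tfrac12)/N)$ then yields an eigenfunction with eigenvalue $-\lambda$, where $\lambda = 2k(1-\cos(\pi/N)) = (k\pi^2/N^2)(1+o(1))$.

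Next I would compute the two main Wilson inputs. Choosing initial configuration $\gamma_0 = \land$, a closed-form trigonometric sum gives $f(\land) = \tfrac{kN}{\pi}\sin(\pi m/(kN)) + O(1)$, which is comparable to $m$ uniformly for $m \le kN/2$ (using $\sin x \ge 2x/\pi$ on $[0,\pi/2]$). For the carr\'e du champ,
\[ \Gamma f(\gamma) = \sum_{x=1}^{N-1}(a_{x+1}-a_x)^2\bigl[\gamma(x)(k-\gamma(x+1)) + \gamma(x+1)(k-\gamma(x))\bigr], \]
and combining $(a_{x+1}-a_x)^2 \le \pi^2/N^2$ with $\gamma(x)(k-\gamma(x+1)) + \gamma(x+1)(k-\gamma(x)) \le k(\gamma(x)+\gamma(x+1))$ produces the pointwise bound $\sup_\gamma \Gamma f(\gamma) \le 2\pi^2 km/N^2$.

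Feeding these inputs into the continuous Wilson lemma of \cite{priestley} yields a lower bound of the form $\tmix(\epsilon) \ge \frac{1}{2\lambda}\log\bigl(c_\epsilon\, \lambda f(\land)^2/R\bigr)$, where $R := \sup_\gamma \Gamma f$ and $c_\epsilon$ is an explicit factor coming from the Chebyshev-type variance comparison inside Wilson's argument. With $\lambda f(\land)^2/R \asymp m$ and $1/(2\lambda) \asymp N^2/(2k\pi^2)$, this produces the leading term $\frac{1}{2k\pi^2}N^2\log m$; the $\epsilon$-dependent factor $c_\epsilon$ scales linearly in $(1-\epsilon)$, which when divided by $2\lambda$ becomes the correction $\frac{CN^2}{k}\log(1/(1-\epsilon))$ with a negative universal constant $C$.

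The main obstacle is tracking the precise form of the $\epsilon$-correction through the Wilson inequality. This requires simultaneous variance control of $f(X_t)$ via the differential inequality $\frac{d}{dt}\E[f(X_t)^2] = \E[\Gamma f(X_t)] - 2\lambda\E[f(X_t)^2]$ and of $f$ under the stationary distribution $\mu^{k,N,m}$, the latter needing a comparison with the product of $\Bin(k,m/(Nk))$'s that $\mu^{k,N,m}$ is a conditioned version of. Once these variance estimates are matched, the total variation lower bound follows by a standard Chebyshev/union argument, and the remaining bookkeeping is routine.
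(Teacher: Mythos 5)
Your proposal is correct and is essentially the paper's proof: both apply Priestley's continuous-time Wilson lemma (Lemma~\ref{wilsons-lem}) to the lowest Fourier eigenfunction, initialize from $\land$, and bound the time-uniform variance of the eigenfunction to extract the $\frac{1}{2k\pi^2}N^2\log m$ leading term with the $-\frac{1}{2\lambda}\log(1/(1-\epsilon))$ correction. The only differences are cosmetic: you verify the eigenfunction relation directly by expanding $\mathcal{L}_{\text{kSEP}}$ and noting the cancellation of the $\gamma(x)\gamma(x+1)$ cross-terms, whereas the paper obtains the same eigenvalue $-\lambda_{N,k,1} = -2k(1-\cos(\pi/N))$ by lifting to the $S_{k,N}$ shuffle and tracking a single card (Lemma~3.4); and you control $\Var(f(X_t))$ via the carr\'e du champ and the ODE $\frac{d}{dt}\E[f^2] = \E[\Gamma f] - 2\lambda\E[f^2]$, whereas the paper bounds the predictable quadratic variation of the exponential martingale $M_s = e^{(s-t)\lambda}f_1(\eta_s)$ — these are the same estimate in different clothing, and both give $R \asymp m$ after normalization (the paper's $\|f_1\|_\infty \gtrsim mN$, $R \lesssim mN^2$ versus your $\|f\|_\infty \gtrsim m$, $R \lesssim m$ differ only by the scale factor between $f_1(\eta)$ and $f_1(\gamma)$). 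One small note: your closing concern about separately controlling the stationary variance of $f$ via a Binomial comparison is unnecessary, since the uniform-in-$t$ variance bound you already established covers the stationary distribution as a limiting case, and Lemma~\ref{wilsons-lem} only asks for $\Var(\Psi(X_t)) \le R$ for all $t$.
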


We make a quick note that because the $k$-SEP is a projection of the $S_{k,N}$ shuffle, we have that $\tmix^{k,N}(\epsilon) \geq \tmix^{k,N,m}(\epsilon)$ for all $m\leq Nk/2$. In particular, once we prove the above, we will immediately have the following lower bound for the $S_{k,N}$ shuffle mixing time.

\begin{cor}
    There is a constant $C$ such that for any $\epsilon > 0$
    $$\tmix^{N,k}(\epsilon) \geq \frac{1}{2k\pi^2}N^2\log(kN)+\frac{CN^2}{k}\log\p{\frac{1}{1-\epsilon}} + o(N^2).$$
\end{cor}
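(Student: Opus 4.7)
The plan is to deduce this corollary directly from Theorem \ref{lower-bd-thm} using the projection of Markov chains $\varphi_m: S_{k,N} \to \Omega_{k,N,m}$ from Section \ref{preliminaries-section}, exactly as the author previews in the paragraph preceding the statement. No new ingredients are required, only a reduction and some bookkeeping of constants.

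The key input is the general fact that total variation distance to equilibrium is non-increasing under any Markov chain projection: if $\pi: \Omega \to \Omega'$ is such a projection with $\mu' = \pi_*\mu$, then $\vnorm{P'_t(\pi(x),\cdot) - \mu'}_{TV} \leq \vnorm{P_t(x,\cdot) - \mu}_{TV}$ for every starting state $x$ and every $t \geq 0$. A short check (comparing the swap rates $\tau_x^{i,j}$ for the shuffle with the $k$-SEP rates $p_x$ and $q_x$, stratified according to whether the swapped values lie in $[m]$ or not) confirms that $\varphi_m$ is indeed a projection from the $S_{k,N}$ shuffle to the $k$-SEP, and an easy multinomial count shows $(\varphi_m)_*\mu^{N,k} = \mu^{N,k,m}$. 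These two facts together give $d^{k,N,m}(t) \leq d^{k,N}(t)$ for all $t$, and hence $\tmix^{k,N}(\epsilon) \geq \tmix^{k,N,m}(\epsilon)$ for every $m \leq Nk/2$.

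I would then take $m = \floor{kN/2}$, which maximizes $\log m$ subject to the constraint of Theorem \ref{lower-bd-thm}, and apply that theorem to obtain
$$\tmix^{k,N}(\epsilon) \geq \frac{1}{2k\pi^2}N^2\log\floor{kN/2} + \frac{CN^2}{k}\log\p{\frac{1}{1-\epsilon}} + o(N^2).$$
Expanding $\log\floor{kN/2} = \log(kN) - \log 2 + o(1)$ rewrites the first term as $\frac{1}{2k\pi^2}N^2\log(kN) - \frac{\log 2}{2k\pi^2}N^2 + o(N^2)$. The spurious constant-in-$\epsilon$ correction $-\frac{\log 2}{2k\pi^2}N^2 = -\frac{\log 2}{2\pi^2}\cdot\frac{N^2}{k}$ is of the same order as the existing $\frac{CN^2}{k}$ term and can be absorbed into it by slightly shrinking $C$ (legitimate because $\log(1/(1-\epsilon)) > 0$ for $\epsilon > 0$).

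There is no real obstacle. All the substantive work for the lower bound lives inside Theorem \ref{lower-bd-thm} itself (explicit eigenvalues of $\ca{L}_{\text{kSEP}}$ fed into a continuous Wilson's method), and this corollary is the essentially free consequence it is advertised to be; the only thing to watch is the $\log 2$ shift arising from $m \leq kN/2$, which is why the lower bound comes out cleanly in terms of $\log(kN)$ up to a correction already dominated by the $\frac{CN^2}{k}$ term.
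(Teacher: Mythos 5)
Your argument is exactly the paper's intended route: the paper itself only says, in the sentence preceding the corollary, that the projection $\varphi_m$ gives $\tmix^{k,N}(\epsilon)\geq\tmix^{k,N,m}(\epsilon)$ for all $m\leq Nk/2$, and that Theorem~\ref{lower-bd-thm} then yields the corollary ``immediately''; you supply the same projection-monotonicity reasoning and the same choice $m=\floor{kN/2}$. One caveat to your bookkeeping, which the paper also elides: the shift $-\frac{\log 2}{2\pi^2}\cdot\frac{N^2}{k}$ coming from $\log\floor{kN/2}=\log(kN)-\log 2+o(1)$ cannot literally be absorbed into $\frac{CN^2}{k}\log\p{\frac{1}{1-\epsilon}}$ uniformly in $\epsilon$, since $\log\p{\frac{1}{1-\epsilon}}\to 0$ as $\epsilon\to 0$ while the shift stays of order $N^2$; the corollary as displayed is therefore really only accurate up to an additional $O(N^2/k)$ additive term, but since that term is $o\p{N^2\log(kN)}$, it is harmless for the cutoff statement of Theorem~\ref{shuffle-cutoff-thm}, which is the only place the corollary is used.
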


Our main tool in proving theorem \ref{lower-bd-thm} will be the following adaptation of Wilson's lemma for continuous time processes.

\begin{lem} [\cite{priestley}, lemma 4.1] \label{wilsons-lem}
    Let $\Psi$ be an eigenfunction of the generator of a continuous Markov chain $X_t$ with eigenvalue $-\lambda$. If $\Var(\Psi(X_t)) \leq R$ for all $t\geq 0$, then for all $\epsilon > 0$,
    $$ \tmix(1-\epsilon) \geq \frac{1}{\lambda}\log(\|\Psi\|_\infty) - \frac{1}{2\lambda}\log(8R/\epsilon). $$
\end{lem}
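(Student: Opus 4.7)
The plan is to apply the standard two-moment (Chebyshev) test to the real-valued statistic $\Psi(X_t)$: because $\Psi$ is an eigenfunction with eigenvalue $-\lambda$, its mean decays deterministically as $e^{-\lambda t}\Psi(x)$ under $P_x$, while under the stationary law $\mu$ it has mean $0$. The uniform variance hypothesis keeps both distributions tightly concentrated around their means, so a threshold placed between them yields a test event with large TV separation.

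First I would derive the mean identity. From $\mathcal{L}\Psi = -\lambda\Psi$ and the Kolmogorov forward equation (or directly from Dynkin's formula applied to the martingale $\Psi(X_t) - \int_0^t (\mathcal{L}\Psi)(X_s)\,ds$), one obtains $\frac{d}{dt}\E_x[\Psi(X_t)] = -\lambda\,\E_x[\Psi(X_t)]$, hence $\E_x[\Psi(X_t)] = e^{-\lambda t}\Psi(x)$. Taking $X_0 \sim \mu$, stationarity gives $X_t \sim \mu$ for all $t$, so $\E_\mu[\Psi] = e^{-\lambda t}\E_\mu[\Psi]$ for every $t$, forcing $\E_\mu[\Psi] = 0$; likewise the variance hypothesis applied to the initial law $\mu$ gives $\Var_\mu[\Psi] \leq R$.

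Next I would set up the test event. Pick $x_0$ with $|\Psi(x_0)| = M := \|\Psi\|_\infty$; without loss of generality $\Psi(x_0) = M$. For a fixed $t$, let the threshold be $a_t := \tfrac{1}{2}e^{-\lambda t}M$ and take $A := \{y : \Psi(y) > a_t\}$. Chebyshev's inequality applied to each distribution separately gives
$$P_t(x_0, A^c) \leq \frac{\Var_{x_0}[\Psi(X_t)]}{a_t^2} \leq \frac{4R\,e^{2\lambda t}}{M^2}, \qquad \mu(A) \leq \frac{\Var_\mu[\Psi]}{a_t^2} \leq \frac{4R\,e^{2\lambda t}}{M^2},$$
and therefore
$$d(t) \geq P_t(x_0, A) - \mu(A) \geq 1 - \frac{8R\,e^{2\lambda t}}{M^2}.$$
Requiring the right-hand side to exceed $1-\epsilon$ rearranges to $t \leq \tfrac{1}{\lambda}\log M - \tfrac{1}{2\lambda}\log(8R/\epsilon)$. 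For every such $t$ the chain is still at distance greater than $1-\epsilon$ from $\mu$, which is precisely the claimed lower bound on $\tmix(1-\epsilon)$.

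The entire argument is Chebyshev plus one ODE; no step is genuinely hard. The only point that requires any care is transferring the variance bound from the assumption (given for some starting law) to the stationary law, which is handled by the observation above that if $X_0 \sim \mu$ then $X_t \sim \mu$, so the hypothesis, read for this particular initial distribution, yields $\Var_\mu[\Psi] \leq R$ immediately. Everything else is bookkeeping of constants inside the logarithm.
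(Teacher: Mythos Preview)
Your argument is correct and is exactly the standard Wilson-method proof: use the eigenfunction identity $\E_x[\Psi(X_t)]=e^{-\lambda t}\Psi(x)$, note $\E_\mu[\Psi]=0$, place a threshold at half the decayed mean, and apply Chebyshev on both sides. The paper does not give its own proof of this lemma; it simply cites \cite{priestley}, lemma~4.1, and uses it as a black box, so there is nothing to compare against beyond confirming that your write-up matches the usual derivation (it does, including the constants).
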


Thus the goal will first be to gather eigenfunctions of the $k$-SEP. Luckily by first ``looking up" to the $S_{k,N}$ shuffle and then ``looking down" by only following a single card, we can get a good number of eigenfunctions.

Suppose $\gamma\in \Omega_{N,k,m}$ and let $\eta$ be the associated height function. Define the functions $f_j: \Omega_{N,k,m} \to \bbr$ by
$$ f_j(\gamma) = \sum_{x=1}^N \gamma(x)\cos\p{\frac{(2x-1)j\pi}{2N}}, \qquad \qquad f_j(\eta) = \sum_{x=1}^N \eta(x)\sin\p{\frac{xj\pi}{N}}. $$

\begin{lem}
    If $0 \leq j \leq N-1$, then $f_j$ defined as above is an eigenfunction of the $k$-SEP chain with eigenvalue $-\lambda_{N,k,j}$ where $\lambda_{N,k,j} = 2k(1-\cos(j\pi/N)) = k\frac{j^2\pi^2}{N^2}(1+o(1))$ (as $N\to\infty$).
\end{lem}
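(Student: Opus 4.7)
My plan is to verify the eigenvalue identity directly on the $\gamma$-form of $f_j$ (where each transition acts locally and additively on $\gamma$) and then transfer the conclusion to the $\eta$-form via summation by parts. Throughout, write $c_x := \cos\p{\frac{(2x-1)j\pi}{2N}}$. The first step is to apply the generator and observe that each admissible $\tau_x^+$ transition changes $f_j(\gamma)$ by exactly $c_{x+1}-c_x$, and each $\tau_x^-$ by $c_{x-1}-c_x$. Weighting by the rates $p_x = \gamma(x)(k-\gamma(x+1))$ and $q_x = \gamma(x)(k-\gamma(x-1))$ and summing, the bilinear-in-$\gamma$ terms from the two sums cancel after reindexing $x \mapsto x+1$ in one of them---an algebraic expression of the symmetry in the rate prescription. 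What survives is the linear expression $k\sum_{x=1}^{N} \gamma(x) D_x$, where $D_x$ is a discrete second difference of $c$ in the interior and a one-sided difference at $x=1$ and $x=N$.

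Next I would use the product-to-sum identity to get $D_x = c_{x+1}+c_{x-1}-2c_x = 2(\cos(j\pi/N)-1)c_x$ in the interior. The step I expect to be the main obstacle---or at least the one requiring care---is the boundary: one must check that the discrete Neumann conditions $c_0=c_1$ and $c_{N+1}=c_N$ both hold, so that the same interior formula for $D_x$ persists at $x=1$ and $x=N$. These identities are precisely why the phase $(2x-1)/(2N)$ (with a half-integer shift) appears in the definition of $c_x$; the shift encodes the reflecting walls of the $k$-SEP. With these in hand, one concludes $(\ca{L}_{\text{kSEP}} f_j)(\gamma) = -2k(1-\cos(j\pi/N)) f_j(\gamma) = -\lambda_{N,k,j} f_j(\gamma)$, and the asymptotic $\lambda_{N,k,j} = kj^2\pi^2/N^2 \cdot (1+o(1))$ follows by Taylor expansion of $\cos$.

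For the $\eta$-form, I would perform summation by parts using $\gamma(x) = \eta(x)-\eta(x-1)+m/N$ and the boundary condition $\eta(0)=\eta(N)=0$ to express $\sum_x \gamma(x)c_x$ as $2\sin(j\pi/(2N))\sum_x \eta(x)\sin(xj\pi/N) + \frac{m}{N}\sum_{x=1}^N c_x$. A standard geometric-sum computation shows $\sum_{x=1}^N c_x = 0$ for $1 \le j \le N-1$ (the $j=0$ case corresponds to the trivial eigenvalue $\lambda=0$, where both forms are constant on $\Omega_{N,k,m}$ anyway), so the two forms of $f_j$ agree up to a positive scalar multiple, and the $\eta$-form inherits the eigenfunction property with the same eigenvalue.
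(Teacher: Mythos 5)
Your proof is correct, but it takes a genuinely different route from the paper's. You verify the eigenvalue identity by a direct algebraic computation: expand $\ca{L}_{\text{kSEP}}f_j$, observe that the two bilinear-in-$\gamma$ terms arising from $p_x$ and $q_x$ cancel after reindexing, reduce the remainder to $k\sum_x \gamma(x)D_x$ where $D_x$ is a discrete Laplacian of $c_x$ (with the half-integer phase $(2x-1)/(2N)$ providing exactly the discrete Neumann conditions $c_0=c_1$, $c_{N}=c_{N+1}$ needed to make the interior formula persist at the ends), and finish with the product-to-sum identity. The paper instead ``looks up'' to the $S_{k,N}$ shuffle, observes that a single tagged card performs a continuized lazy simple random walk on the path at rate $2k$, invokes the known eigenfunctions $\cos\p{\frac{(2x-1)j\pi}{2N}}$ of that walk, sums over the $m$ tagged cards to produce a function constant on $\varphi_m$-fibers, and projects back down. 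The paper's argument is structural: it explains \emph{why} these cosines arise and simultaneously yields the corresponding eigenfunctions for the $S_{k,N}$ shuffle, which is useful background for the rest of the paper. Your argument is self-contained and purely computational; it makes the rate symmetry (and hence the cancellation of the nonlinear part of the generator) explicit, which is illuminating in a different way. Your handling of the transfer from the $\gamma$-form to the $\eta$-form is also correct: Abel summation using $\eta(0)=\eta(N)=0$ gives $f_j(\gamma) = 2\sin(j\pi/(2N))f_j(\eta) + \frac{m}{N}\sum_x c_x$, and the geometric sum $\sum_{x=1}^N c_x$ indeed vanishes for $1\le j\le N-1$, so the two forms are positive scalar multiples of each other. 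One small point worth being explicit about in a final write-up: the terms $p_N$ and $q_1$ in the generator involve undefined quantities ($\gamma(N+1)$, $\gamma(0)$), but since $\tau_N^+$ and $\tau_1^-$ act as the identity these terms vanish; your ``one-sided difference at the boundary'' claim implicitly relies on this, and spelling it out would make the reindexing step airtight.
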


Now we are ready to prove theorem \ref{lower-bd-thm}.

\begin{proof}[Proof of Theorem \ref{lower-bd-thm}]
    Writing $\eta_t$ for the Markov process on the height functions, the fact that $f_1$ is an eigenvector gives us that $M_s := e^{(s-t)\lambda_N}f_1(\eta_s)$ is a martingale for any $t$. By construction $M_t = f_1(\eta_t)$. Therefore, if $\eta_0 = \land$, then $\Var(f_1(\eta^\land_t)) = \Var(M_t) = \E\s{\pres{M}^2_s}$.
    
    Note that each time a Poisson clock rings, $f_1$ changes by at most a constant $C$. Furthermore, a change will occur with rate at most $2km$ (each of the $m$ particles hops left or right with rate $\leq 2k$). Therefore, $\E\s{\pres{M}^2_s} \leq Cmk\int_0^t e^{2(s-t)\lambda_{N,k}}dt \leq CmN^2$.

    Finally, we see that $f_1(\land) = \sum_{x=1}^N \land(x)\sin(\frac{x\pi}{N}) \geq \frac{1}{2}\sum_{x=N/4}^{3N/4}\land(x) \geq \frac{1}{2}\sum_{x=N/4}^{3N/4} m/4 = mN/16$. The second inequality follows from the expression for $\land(x)$. Thus, $\|f_1\|_\infty \geq mN/16$.

    
    The lower bound in theorem \ref{lower-bd-thm} thus follows when taking $\Psi = f_1$, $R = CmN^2$, and $\lambda = 2k(1-\cos(\pi/N)) = (1+o(1))k\pi^2/N^2$ in lemma \ref{wilsons-lem}.
\end{proof}


\section{Grand Coupling and Properties}

\subsection{Construction}\label{grand-coupling-section}

Our goal for this section is to construct a grand coupling of the $S_{k,N}$ shuffle which preserves the order on $S_{k,N}$. To this end, with each $x \in \{1,\ldots, N-1\}$ we associate $k^2$ independent Poisson processes $\ca{T}^{x,i,j} = (\ca{T}_n^{x,i,j})_{n\geq 0}$ where $i,j \in [k]$ each with rate 2. Let $(U_n^{x,i,j})_{n \geq 0}$ be an independent iid field of Bernoulli variables with parameter 1/2. At time $t=\ca{T}_n^{x,i,j}$ (for $n\geq 1$), we update $\sigma_t$ via one of two actions depending on the value of $U_n^{x,i,j}$.

\begin{itemize}
    \item If $U_n^{x,i,j} = 1$ and $\sigma_{t^-}(x+1)_j < \sigma_{t^-}(x)_i$ or $U_n^{x,i,j} = 0$ and $\sigma_{t^-}(x+1)_j > \sigma_{t^-}(x)_i$, then $\sigma_t = \tau_x^{i,j}(\sigma_t)$
    \item Otherwise we do nothing.
\end{itemize}

Putting this action into words, we pick out the $i$th smallest card in packet $x$ and the $j$th smallest card in packet $x+1$. The Bernoulli variable $U$ will tell us whether to sort or reverse sort these cards by transposing them (that is, by taking the cards out of their packets and putting them in the other packet).

We highlight what this does to the height function $\tilde{\sigma}$. Note that the only values in $\{\tilde{\sigma}(z, y)\}$ that change are those for which $z=x$. Suppose $|\sigma(x) \cap [y]| = a$ and $|\sigma(x+1) \cap [y]| = b$ and $t = \ca{T}_n^{x,i,j}$. If $U_n^{x,i,j} = 1$ and $i > a$ and $j \leq b$, then $\tilde{\sigma}_t(x, y) = \tilde{\sigma}_{t^-}(x,y)+1$. If $U_n^{x,i,j} = 0$ and $i \leq a$ and $j > b$, then $\tilde{\sigma}_t(x,y) = \tilde{\sigma}_{t^-}(x,y)-1$. Otherwise, nothing happens and $\tilde{\sigma}_t(x,y) = \tilde{\sigma}_{t^-}(x,y)$.

\subsection{Monotonicity of the Coupling}

We now verify that this coupling does in fact preserve the ordering given in section \ref{preliminaries-section}. That is, if $\xi \in S_{k,N}$ and we write $\sigma_t^\xi$ for the dynamics starting at $\xi$, then the following holds.

\begin{lem}\label{grand-monotonicity}
    If $\xi \geq \xi'$, then $\sigma_t^\xi \geq \sigma_t^{\xi'}$ for all $t\geq 0$.
\end{lem}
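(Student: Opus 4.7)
The plan is to reduce monotonicity to a per-update local check. The coupled dynamics are piecewise constant with jumps only at clock ringing times, so by induction on these times it suffices to show: if $\sigma_{t^-}^\xi \geq \sigma_{t^-}^{\xi'}$ and a clock $\ca{T}^{x,i,j}$ rings at time $t$, then $\sigma_t^\xi \geq \sigma_t^{\xi'}$. From the description of the coupling in Section \ref{grand-coupling-section}, only the entries $\tilde{\sigma}(x,y)$ in column $x$ can change at time $t$, so I just need to verify that for every $y \in [Nk]$ the inequality $\tilde{\sigma}^\xi_t(x,y) \geq \tilde{\sigma}^{\xi'}_t(x,y)$ survives the update.

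Fix such a $y$ and set $a = |\sigma^\xi_{t^-}(x)\cap[y]|$, $b = |\sigma^\xi_{t^-}(x+1)\cap[y]|$, with $a', b'$ defined identically for $\xi'$. Telescoping the identity for $1_{y\in\sigma(x)}$ given in Section \ref{preliminaries-section} yields $|\sigma(x)\cap[y]| = \tilde{\sigma}(x,y)-\tilde{\sigma}(x-1,y)+y/N$. In the \emph{tight} case $\tilde{\sigma}^\xi_{t^-}(x,y) = \tilde{\sigma}^{\xi'}_{t^-}(x,y)$, this identity together with the inductive hypothesis applied at the columns $x-1$ and $x+1$ (which are untouched by the clock) immediately gives $a \leq a'$ and $b \geq b'$.

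The structural feature that makes the coupling work is that the shared Bernoulli variable $U_n^{x,i,j}$ forces the increments $\Delta^\xi := \tilde{\sigma}^\xi_t(x,y) - \tilde{\sigma}^\xi_{t^-}(x,y)$ and $\Delta^{\xi'}$ to lie in $\{0,1\}$ (if $U_n^{x,i,j}=1$) or $\{-1,0\}$ (if $U_n^{x,i,j}=0$), so $\Delta^\xi - \Delta^{\xi'} \geq -1$ always. Hence the non-tight case, where $\tilde{\sigma}^\xi_{t^-}(x,y) \geq \tilde{\sigma}^{\xi'}_{t^-}(x,y)+1$, is automatic. In the tight case I want $\Delta^\xi \geq \Delta^{\xi'}$. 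Using the explicit update rule from Section \ref{grand-coupling-section}: when $U_n^{x,i,j}=1$, $\Delta=+1$ iff $i>a$ and $j\leq b$, and the inclusions $\{i>a'\}\subseteq\{i>a\}$, $\{j\leq b'\}\subseteq\{j\leq b\}$ give $\Delta^{\xi'}=1\Rightarrow\Delta^{\xi}=1$; dually, when $U_n^{x,i,j}=0$, $\Delta=-1$ iff $i\leq a$ and $j>b$, and the reversed inclusions give $\Delta^\xi=-1\Rightarrow\Delta^{\xi'}=-1$. Either way, $\Delta^\xi\geq\Delta^{\xi'}$, closing the induction.

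The only real obstacle is the tight-case bookkeeping; once one extracts $a\leq a'$ and $b\geq b'$ from the pre-update order at the untouched neighbors $(x\pm 1, y)$, the case analysis is immediate, and the sign constraint from the shared $U$ handles the non-tight case for free.
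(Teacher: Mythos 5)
Your proof is correct and follows essentially the same route as the paper: induction over clock-ring times, reduction to column $x$, extraction of $a\leq a'$ and $b\geq b'$ from the inductive hypothesis at the untouched neighbors $(x\pm1,y)$ in the tight case, and the shared-$U$ observation that increments differ by at most one in the favorable direction. Your phrasing via $\Delta^\xi$ and $\Delta^{\xi'}$ is a cosmetic repackaging of the paper's ``$\tilde{\sigma}^\xi$ cannot decrease while $\tilde{\sigma}^{\xi'}$ increases'' observation, so there is no substantive difference.
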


\begin{proof}
    This is proven inductively and follows \cite{lacoin-at}, proposition 3.1. Assume that $t = \ca{T}_n^{x,i,j}$. Suppose $U_n^{x,i,j} = 1$. We only have to prove that $\tilde{\sigma}_t^\xi(x,y) \geq \tilde{\sigma}_t^{\xi'}(x,y)$ for all $y$, since all other values stay fixed.

    Note that from our description of how the grand coupling affects the height function, it is impossible for $\tilde{\sigma}_t^\xi(x,y)$ to decrease while $\tilde{\sigma}_t^{\xi'}(x,y)$ increases. Therefore, if $\tilde{\sigma}_t^\xi(x,y) < \tilde{\sigma}_t^{\xi'}(x,y)$, then we must have that $\tilde{\sigma}_{t^-}^\xi(x,y) = \tilde{\sigma}_{t^-}^{\xi'}(x,y)$. However, we also know that $\tilde{\sigma}_{t^-}^\xi(x-1,y) \geq \tilde{\sigma}_{t^-}^{\xi'}(x-1,y)$ and $\tilde{\sigma}_{t^-}^\xi(x+1,y) \geq \tilde{\sigma}_{t^-}^{\xi'}(x+1,y)$. Therefore, if we say that $|\sigma_{t^-}^\xi(x) \cap [y]| = a$, $|\sigma_{t^-}^{\xi'}(x) \cap [y]| = a'$, $|\sigma_{t^-}^\xi(x+1) \cap [y]| = b$, and $|\sigma_{t^-}^{\xi'}(x+1) \cap [y]| = b'$, then $a' \geq a$ and $b \geq b'$. As we saw in the grand coupling, the only way $\tilde{\sigma}_{t^-}^{\xi'}(x,y)$ can increase is if $U_n^{x,i,j}=1$ and $i > a'$ and $j \leq b'$. However, this also implies that $i > a$ and $j \leq b$, and therefore $\tilde{\sigma}_{t^-}^\xi(x,y)$ also increases. Similarly, if $\tilde{\sigma}_{t^-}^{\xi'}(x,y)$ decreases, then $\tilde{\sigma}_{t^-}^\xi(x,y)$ also decreases. In particular, monotonicity of the chain is preserved.
\end{proof}

\subsection{FKG and Censoring Inequalities}

We have a useful FKG-type inequality for $k$-permutations. Here, an increasing function on $S_{k,N}$ means that $f(\sigma) \geq f(\tau)$ if $\sigma \geq \tau$ in $S_{k,N}$.

\begin{thm} [\cite{lacoin-at}, proposition 3.5]
    For any pair of increasing functions $f,g: S_{k,N} \to \bbr$, we have that $\mu(f\cdot g) \geq \mu(f)\mu(g)$, where $\mu$ is the uniform measure on $S_{k,N}$.
\end{thm}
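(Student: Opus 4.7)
The plan is to deduce the $S_{k,N}$ version of FKG from the corresponding statement on the symmetric group $S_{Nk}$ (the $k=1$ case), transported through the collapsing map $\theta: S_{Nk} \to S_{k,N}$ introduced in Section \ref{preliminaries-section}. Equip $S_{Nk}$ with its own height function order as in Lacoin: for $\sigma \in S_{Nk}$ set $H_\sigma(x,y) = \sum_{z=1}^x 1_{\sigma(z)\leq y} - xy/(Nk)$, and declare $\sigma \leq \sigma'$ iff $H_\sigma \leq H_{\sigma'}$ pointwise.

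The first step is to show that $\theta$ is order-preserving. Unwinding the definition of $\theta(\sigma)(x) = \{\sigma((x-1)k+1),\ldots,\sigma(xk)\}$, one computes
$$ \tilde{\theta(\sigma)}(x,y) \;=\; \sum_{z=1}^{x} |\theta(\sigma)(z)\cap [y]| - \frac{xy}{N} \;=\; \sum_{w=1}^{xk} 1_{\sigma(w)\leq y} - \frac{xy}{N} \;=\; H_\sigma(xk,y). $$
Thus the $S_{k,N}$-height function of $\theta(\sigma)$ is nothing more than a restriction of the $S_{Nk}$-height function of $\sigma$ to the sublattice $\{(xk,y) : x \in [N],\, y \in [Nk]\}$. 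In particular, $\sigma \leq \sigma'$ in $S_{Nk}$ implies $\theta(\sigma) \leq \theta(\sigma')$ in $S_{k,N}$.

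Next, given increasing functions $f,g: S_{k,N} \to \bbr$, define the pullbacks $F := f\circ \theta$ and $G := g\circ \theta$. By step one these are increasing on $S_{Nk}$. Since $\theta$ is uniformly $(k!)^N$-to-$1$, the pushforward of the uniform measure $\nu$ on $S_{Nk}$ along $\theta$ equals $\mu$, so
$$ \nu(F) = \mu(f), \qquad \nu(G) = \mu(g), \qquad \nu(FG) = \mu(fg). $$
Applying the FKG inequality of \cite{lacoin-at} on $S_{Nk}$ with its uniform measure gives $\nu(FG) \geq \nu(F)\nu(G)$, and the three identities above translate this directly into $\mu(fg) \geq \mu(f)\mu(g)$.

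The only real content is the identity $\tilde{\theta(\sigma)}(x,y) = H_\sigma(xk,y)$ that ensures monotonicity of $\theta$; this is a short bookkeeping check but is the place where one must be careful about the factor-$k$ rescaling in the first coordinate and the different normalizations ($xy/N$ versus $xy/(Nk)$). Once this is in hand, the rest is formal, and the proof reduces the problem cleanly to the single-species FKG inequality already established by Lacoin.
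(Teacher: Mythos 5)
Your proof is correct, and it takes a genuinely different route from the paper's. The paper re-runs the Holley--Preston-style argument directly on $S_{k,N}$: it shows $\mu(A\cap B)\geq\mu(A)\mu(B)$ for increasing events via a coupling of the grand dynamics with a reflected chain kept inside $A$, then integrates over level sets to get the functional FKG inequality. You instead \emph{reduce} the $S_{k,N}$ statement to the already-established $S_{Nk}$ statement through the collapsing map $\theta$. The crux, which you verify correctly, is the identity
\[
\widetilde{\theta(\sigma)}(x,y) \;=\; \sum_{w=1}^{xk} 1_{\sigma(w)\le y} - \frac{xy}{N} \;=\; H_\sigma(xk,y),
\]
so the height function of $\theta(\sigma)$ is the restriction of $H_\sigma$ to the sublattice $\{(xk,y)\}$, and hence $\theta$ is order-preserving. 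Combined with the facts that $\theta$ is exactly $(k!)^N$-to-$1$ (so it pushes uniform forward to uniform) and that pullbacks $f\circ\theta$, $g\circ\theta$ are therefore increasing on $S_{Nk}$, the $S_{Nk}$ FKG inequality transfers verbatim. This is cleaner and shorter; the trade-off is that it is specific to the situation where one has an order-preserving, measure-preserving surjection from a space where FKG is already known, whereas the paper's coupling argument is self-contained and is the pattern that generalizes when no such collapsing map is available. Both are valid proofs.
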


We would also like to introduce a censoring inequality which will help us prove an upper bound on the mixing time. The general idea is that censoring updates can only make the system mix slower. This is a concept, first formalized in \cite{peres-winkler}, that is intuitive, but false for general Markov chains (see \cite{holroyd}).

We define a \textbf{censoring scheme} as a c\`adl\`ag function $\ca{C}: \bbr^+ \to \ca{P}([N-1] \times [k] \times [k])$. Given a censoring scheme $\ca{C}$ define the censored dynamics as those introduced in section \ref{grand-coupling-section}, but if $t = \ca{T}_n^{x,i,j}$, then we only update $\sigma_t$ if $(x, i, j) \in \ca{C}(t)$. Given a distribution $\nu$ on $S_{k,N}$, we let $P_t^{\nu, \ca{C}}$ denote the law of the censored dynamics at time $t$ with initial distribution $\nu$.

\begin{thm}\label{censoring-thm}
    If the probability distribution $\nu$ is an increasing function of $S_{k,N}$ and $\ca{C}$ is a censoring scheme, then
    $$ \|P_t^{\nu,\ca{C}}-\mu\| \leq \|P_t^\nu-\mu\|. $$
\end{thm}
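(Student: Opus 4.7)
My plan is to adapt the Peres--Winkler censoring argument, following the strategy of \cite{lacoin-at}. The three ingredients are: (1) a reduction to elementary censoring schemes; (2) preservation of the property of having an increasing density along the dynamics; and (3) a monotonicity comparison for a single censored update, combined with the fact that the TV distance between $\mu$ and a measure with increasing density is attained on an increasing event.

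For (1), I would approximate a general c\`adl\`ag $\ca{C}$ by piecewise constant schemes with finitely many jumps, using right-continuity of $\ca{C}$ together with continuity of the Markov semigroup in $t$. Then, by induction on the number of time intervals on which updates are censored and on the number of triples censored per interval, I reduce to the case where $\ca{C}$ differs from the no-censoring scheme only on a single interval $[s_1,s_2]$ and only on a single triple $(x_0,i_0,j_0)$.

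For (2), I would show: if $f = d\nu/d\mu$ is increasing on $S_{k,N}$ (equivalently, since $\mu$ is uniform, if $\nu$ itself is increasing), then $f_t^\ca{C} := dP_t^{\nu,\ca{C}}/d\mu$ is increasing for every $t \geq 0$. Each single-update semigroup $e^{s\ca{L}^{x,i,j}}$ is reversible with respect to $\mu$, so $f_t^\ca{C}$ is obtained by applying the time-ordered composition of single-update semigroups to $f$. Each factor preserves increasing functions, by restricting the grand monotone coupling of Lemma \ref{grand-monotonicity} to the single clock $(x,i,j)$: $(e^{s\ca{L}^{x,i,j}}f)(\sigma) = \E[f(\sigma_s^\sigma)]$ is increasing in $\sigma$ whenever $f$ is.

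For (3), monotonicity of the density $f_t^\ca{C}$ implies that $\|P_t^{\nu,\ca{C}}-\mu\|_{TV}$ is attained on an increasing event $A^* = \{f_t^\ca{C} \ge 1\}$, and likewise for the uncensored version. Hence the TV comparison reduces to comparing $P_t^{\nu,\ca{C}}(A)$ and $P_t^\nu(A)$ across increasing events $A$. After the reduction in (1), the key moment is time $s_2$: for $t > s_2$ both processes evolve under the same full semigroup, which is monotone and so preserves the ordering on increasing events. Thus one must compare the two processes at $t=s_2$, which amounts to showing that inserting (or omitting) the single update $\tau_{x_0}^{i_0,j_0}$ moves $\mu(g \cdot 1_A)$ in a prescribed direction for every increasing $A$ and every increasing density $g$. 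The main obstacle is this final step: it is a discrete integration by parts on the lattice $S_{k,N}$ against the explicit swap structure of $\tau_x^{i,j}$. In \cite{lacoin-at} the analogous step exploited the group structure of $S_N$; here, since $S_{k,N}$ is not a group, I would instead rely on the combinatorial description of $\tau_x^{i,j}$ together with the FKG inequality and the auxiliary manipulation packaged in Lemma \ref{messy-action-lem} announced in the introduction.
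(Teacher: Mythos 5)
Your overall Peres--Winkler framework is the right one, and steps (1) and (2) are essentially what the paper does: the paper conditions on the realization of the Poisson clocks (which automatically yields a finite deterministic sequence of updates, avoiding the piecewise-constant approximation you propose, though either route works), shows that a single update sends an increasing density to an increasing density, and shows that omitting updates moves the measure monotonically in the relevant stochastic order. Your step (3) — that for an increasing density the TV distance to $\mu$ is achieved on the increasing event $\{f_t^{\ca{C}} \geq 1\}$, so the comparison reduces to comparing probabilities of increasing events — is also exactly the mechanism the paper uses (via the analogue of Lacoin's Lemma A.4).

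The genuine gap is in your diagnosis of the final single-update comparison. You worry that ``the analogous step exploited the group structure of $S_N$'' and propose to substitute FKG plus Lemma~\ref{messy-action-lem}. This misidentifies where the difficulty lies. Lacoin's censoring argument (and the paper's adaptation of it) never uses the group structure of $S_N$ in this step; it works entirely at the level of the poset of height functions and reversibility of the single-bond generator. The crucial inequality $\nu \succeq \nu_x^{i,j}$ is proved by the elementary two-point Chebyshev (rearrangement) bound
\begin{equation*}
g(\sigma_{x,i,j}^+)\nu(\sigma_{x,i,j}^+)+g(\sigma_{x,i,j}^-)\nu(\sigma_{x,i,j}^-) \;\geq\; \tfrac{1}{2}\bigl(g(\sigma_{x,i,j}^+)+g(\sigma_{x,i,j}^-)\bigr)\bigl(\nu(\sigma_{x,i,j}^+)+\nu(\sigma_{x,i,j}^-)\bigr),
\end{equation*}
which holds because $\sigma_{x,i,j}^+ \geq \sigma_{x,i,j}^-$ and both $g$ and the density $\nu$ (equivalently $\nu/\mu$, since $\mu$ is uniform) are increasing, so the two sequences $\bigl(g(\sigma_{x,i,j}^\pm)\bigr)$ and $\bigl(\nu(\sigma_{x,i,j}^\pm)\bigr)$ are similarly ordered. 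Summing over $\sigma$ gives $\nu(g)\geq\nu_x^{i,j}(g)$. Neither FKG nor Lemma~\ref{messy-action-lem} enters here; those are needed for a different part of the paper, namely the monotone projection results for skeletons and semi-skeletons (Section~4.4), where the failure of $S_{k,N}$ to be a group actually does create an obstruction. If you try to route the censoring comparison through FKG and Lemma~\ref{messy-action-lem} you will be overcomplicating a step that is in fact purely local and order-theoretic.
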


This theorem is proved in several parts.

Let $\sigma_{x,i,j}^+$ be the $k$-permutation $\sigma$ after the $i$th smallest card in packet $x$ and the $j$th smallest card in packet $x+1$ are sorted. Let $\sigma_{x,i,j}^-$ be the same but with the cards reverse sorted. The distribution of $\nu$ after the clock $\ca{T}_{x,i,j}$ rings is precisely $\nu_x^{i,j}(\sigma) = (\nu(\sigma_{x,i,j}^+)+\nu(\sigma_{x,i,j}^-))/2$.

\begin{lem}
    Let $\nu$ be a probability distribution on $S_{k,N}$. If $\nu$ is increasing, then $\nu_x^{i,j}$ is increasing and $\nu \succeq \nu_x^{i,j}$.
\end{lem}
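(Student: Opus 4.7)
The plan has two parts. For the monotonicity statement, I would first observe that each of the deterministic maps $\sigma \mapsto \sigma_{x,i,j}^\pm$ is order-preserving on $S_{k,N}$. This is essentially free from the coupling work already done: conditioning on the grand coupling at a single ring of the clock $\ca{T}^{x,i,j}$ with $U=1$ (respectively $U=0$) realizes the map $+$ (respectively $-$) simultaneously from every starting configuration, so the case analysis in the proof of lemma \ref{grand-monotonicity} immediately shows that the partial order is preserved. Given that, if $\sigma \leq \tau$ then $\nu(\sigma_{x,i,j}^\pm) \leq \nu(\tau_{x,i,j}^\pm)$ by the assumed monotonicity of $\nu$, and averaging yields $\nu_x^{i,j}(\sigma) \leq \nu_x^{i,j}(\tau)$.

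For the stochastic dominance, I would partition $S_{k,N}$ into the two-element orbits $\{\rho^+,\rho^-\} = \{\sigma_{x,i,j}^+,\sigma_{x,i,j}^-\}$ with $\rho^+ > \rho^-$ strictly. The strict ordering is a direct height-function check: writing $a<b$ for the two swap values, $\tilde\rho^\pm$ agree outside packet $x$, and on packet $x$ one has
\begin{align*}
\tilde\rho^+(x,y) - \tilde\rho^-(x,y) = \mathbf{1}\{a \leq y < b\},
\end{align*}
which is nonnegative and strictly positive for $y \in \{a,\ldots,b-1\}$. On each orbit the function $\nu_x^{i,j}$ redistributes the $\nu$-mass equally, assigning $(\nu(\rho^+)+\nu(\rho^-))/2$ to each endpoint, while $\nu(\rho^+) \geq \nu(\rho^-)$ because $\nu$ is increasing. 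For any increasing $f \colon S_{k,N} \to \bbr$ I would then telescope
\begin{align*}
\sum_\sigma f(\sigma)\bigl(\nu(\sigma) - \nu_x^{i,j}(\sigma)\bigr) = \sum_{\{\rho^+,\rho^-\}} \frac{\nu(\rho^+)-\nu(\rho^-)}{2}\bigl(f(\rho^+)-f(\rho^-)\bigr),
\end{align*}
and each summand is nonnegative since both factors are nonnegative (from monotonicity of $\nu$ and of $f$ respectively, together with $\rho^+>\rho^-$), which gives $\nu \succeq \nu_x^{i,j}$.

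The step I expect to be the main obstacle is confirming that $\sigma \mapsto \{\sigma_{x,i,j}^+,\sigma_{x,i,j}^-\}$ genuinely partitions $S_{k,N}$ into two-element pairs, i.e.\ that the sort/reverse-sort operation is involutive in the sense that $\{(\sigma^+)^+,(\sigma^+)^-\}=\{\sigma^+,\sigma^-\}$. This is transparent in the Lacoin $k=1$ case \cite{lacoin-at}, where a swap is tautologically its own inverse, but in the $k$-packet generalization it requires a careful check that the re-ranking of cards inside each packet interacts well with the swap $\tau_x^{i,j}$; once that bookkeeping is done, the monotonicity inherited from lemma \ref{grand-monotonicity} and the pairwise telescoping above complete both halves of the lemma.
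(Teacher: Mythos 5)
Your proof splits into two parts, and they fare quite differently.

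The monotonicity of $\nu_x^{i,j}$ is fine, and is exactly the paper's argument: lemma \ref{grand-monotonicity} already shows the maps $\sigma \mapsto \sigma_{x,i,j}^\pm$ are order-preserving (they are what a single clock ring with $U=1$, resp.\ $U=0$, does under the grand coupling), and averaging preserves monotonicity of $\nu$.

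The stochastic dominance is where you and the paper part ways. The paper applies the Chebyshev sum inequality to the pair $(g,\nu)$ evaluated at $\sigma^\pm$ and then sums over $\sigma$; you instead try to \emph{partition} $S_{k,N}$ into two-element orbits $\{\rho^+,\rho^-\}$ and telescope over the partition. You correctly flag that this requires the sort/reverse-sort operation to be involutive, i.e.\ $\{(\sigma^+)^+,(\sigma^+)^-\}=\{\sigma^+,\sigma^-\}$. That concern is not just bookkeeping to be checked; it is \emph{false} for $k\geq 2$, and this is the gap. Take $k=2$, focus on a single pair of packets $(x,x+1)$, with $\sigma(x)=\{3,4\}$, $\sigma(x+1)=\{1,2\}$, and $(i,j)=(2,1)$. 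Then $\sigma(x)_2=4 > 1 = \sigma(x+1)_1$, so $\sigma^-=\sigma$ and $\sigma^+$ swaps $4$ and $1$, giving $\sigma^+(x)=\{1,3\}$, $\sigma^+(x+1)=\{2,4\}$. But now $\sigma^+(x)_2 = 3 > 2 = \sigma^+(x+1)_1$: applying the rule again to $\sigma^+$ picks out a \emph{different} pair of cards $(3,2)$, not the pair $(4,1)$ you started with. Thus $(\sigma^+)^+=(\{1,2\},\{3,4\})$, which is neither $\sigma^+$ nor $\sigma^-$, so $\{(\sigma^+)^+,(\sigma^+)^-\}\neq\{\sigma^+,\sigma^-\}$ and the map $\sigma\mapsto\{\sigma^+,\sigma^-\}$ does \emph{not} partition $S_{k,N}$ into pairs. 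The root cause is that $\tau_x^{i,j}$ is not an involution for $k>1$: after a swap, the ranks of the swapped cards within their new packets change, so the ``same'' $(i,j)$ now refers to different cards. In Lacoin's $k=1$ case this issue is invisible because an adjacent transposition is its own inverse, which is why the orbit picture is clean there. Since your telescoping identity
\begin{align*}
\sum_\sigma f(\sigma)\bigl(\nu(\sigma) - \nu_x^{i,j}(\sigma)\bigr) = \sum_{\{\rho^+,\rho^-\}} \frac{\nu(\rho^+)-\nu(\rho^-)}{2}\bigl(f(\rho^+)-f(\rho^-)\bigr)
\end{align*}
is derived by summing over the alleged two-element orbits, it does not survive once the partition fails; without it, the dominance conclusion is not established by your argument. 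To repair the argument one must either re-parametrize the update so that it genuinely acts involutively on pairs (e.g.\ index the swap by the two card \emph{labels} rather than by ranks, and verify the resulting dynamics still has the right generator and monotonicity), or else follow the paper's route of bounding $\nu(g)-\nu_x^{i,j}(g)$ directly, which sidesteps the need for a pairwise partition of the state space but still requires care with how the maps $\sigma\mapsto\sigma^\pm$ distribute mass.
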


\begin{proof}
    If $\sigma \geq \xi$, then the proof of lemma \ref{grand-monotonicity} shows that $\sigma_{x,i,j}^+ \geq \xi_{x,i,j}^+$ and $\sigma_{x,i,j}^- \geq \xi_{x,i,j}^-$. Therefore, $\nu_x^{i,j}(\sigma) \geq \nu_x^{i,j}(\xi)$ since $\nu$ is increasing. The rest of the proof follows precisely as in \cite{lacoin-at}, lemma A.2.    
\end{proof}

\begin{lem}[\cite{lacoin-at}, Lemma A.4]
    Let $\nu$ and $\nu'$ be probability distributions on $S_{k,N}$. If $\nu$ is increasing and $\nu \preceq \nu'$, then $\|\nu-\mu\| \leq \|\nu'-\mu\|$.
\end{lem}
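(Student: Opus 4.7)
The plan is to exploit the fact that the uniform measure $\mu$ is constant together with the standard characterization of total variation distance as a maximum over events. The central observation is that the set
$$ A := \set{\sigma \in S_{k,N} : \nu(\sigma) \geq \mu(\sigma)} $$
is an \emph{up-set} (an increasing subset of $S_{k,N}$). This is because $\mu$ is uniform, so $\sigma \mapsto \nu(\sigma) - \mu(\sigma)$ differs from the increasing function $\nu$ by a constant and hence is itself increasing. Therefore if $\sigma \in A$ and $\xi \geq \sigma$, then $\nu(\xi) \geq \nu(\sigma) \geq \mu(\sigma) = \mu(\xi)$, so $\xi \in A$.

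Next I would use the well-known identity
$$ \|\nu - \mu\|_{TV} = \nu(A) - \mu(A), $$
which holds because $A$ is precisely the set on which $\nu - \mu$ is non-negative. For the right-hand side $\|\nu' - \mu\|_{TV}$, I use only the one-sided bound
$$ \|\nu' - \mu\|_{TV} \geq \nu'(A) - \mu(A), $$
which is immediate from the supremum definition of total variation distance.

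The final step is to invoke stochastic domination. Since $A$ is an up-set, $\mathbf{1}_A$ is an increasing function on $S_{k,N}$, so the hypothesis $\nu \preceq \nu'$ yields $\nu(A) \leq \nu'(A)$. Chaining the three inequalities,
$$ \|\nu - \mu\|_{TV} = \nu(A) - \mu(A) \leq \nu'(A) - \mu(A) \leq \|\nu' - \mu\|_{TV}, $$
which is the desired conclusion. (Note in particular that $\nu'(A) - \mu(A) \geq \nu(A) - \mu(A) \geq 0$, so there is no sign issue in the last step.)

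There is no real obstacle here: the monotonicity assumption on $\nu$ is exactly what is needed to force the optimal TV-witnessing event $A$ to be an up-set, and the hypothesis $\nu \preceq \nu'$ is tailor-made for comparing measures of up-sets. The only subtle point is confirming that the set $A$ attaining the TV supremum for the pair $(\nu,\mu)$ is itself an up-set, and this is where the uniformity of $\mu$ is essential: without it, the naive level set of $\nu - \mu$ need not be monotone.
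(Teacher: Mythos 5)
Your proof is correct and matches the paper's argument essentially verbatim: both take $A = \{\sigma : \nu(\sigma) \geq \mu(\sigma)\}$, observe that $A$ is an up-set because $\mu$ is constant and $\nu$ is increasing, write $\|\nu-\mu\| = \nu(A)-\mu(A)$, and then apply stochastic domination to $\mathbf{1}_A$ to conclude. No substantive difference.
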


\begin{lem}
    Let $\nu_0$ be an increasing probability distribution on $S_{k,N}$ and fix a deterministic sequence $(x_1, i_1, j_1), \ldots, (x_m,i_m,j_m)$ with entries in $[N-1]\times [k]\times[k]$. Let $\nu$ be the distribution obtained from $\nu_0$ after successive updates at $(x_1,i_1,j_1),\ldots,(x_m,i_m,j_m)$ and let $\nu'$ be the same but with finitely many updates omitted. Then $\nu \preceq \nu'$.
\end{lem}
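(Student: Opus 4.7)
The plan is to proceed by induction on the number of omitted updates. The base case of zero omissions is trivial since $\nu = \nu'$. For the inductive step, suppose the claim holds whenever at most $q$ updates are omitted. If $\nu'$ arises from $\nu_0$ by the schedule $(x_1, i_1, j_1), \ldots, (x_m, i_m, j_m)$ with $q+1$ omissions at indices $\ell_1 < \cdots < \ell_{q+1}$, introduce an intermediate distribution $\nu''$ obtained from $\nu_0$ by the same schedule but omitting only the first $q$ indices $\ell_1, \ldots, \ell_q$. The induction hypothesis immediately gives $\nu \preceq \nu''$, so it remains to show $\nu'' \preceq \nu'$; that is, adding one more omission (at $\ell_{q+1}$) can only push the distribution upward in the stochastic order.

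Let $\rho$ denote the distribution of the $k$-permutation just before the potential update at index $\ell_{q+1}$ under either procedure; both procedures agree up to that point, so this is well-defined. Iterating the preceding lemma, which states that a single update preserves the property of being increasing, and starting from the increasing distribution $\nu_0$, we conclude that $\rho$ is increasing. Applying that lemma one more time at $(x,i,j) := (x_{\ell_{q+1}}, i_{\ell_{q+1}}, j_{\ell_{q+1}})$ yields $\rho_x^{i,j} \preceq \rho$. Hence immediately after index $\ell_{q+1}$, the distribution used in the $\nu'$ procedure (which skips the update and stays at $\rho$) stochastically dominates the one used in the $\nu''$ procedure (which transitions to $\rho_x^{i,j}$). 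Note also that the remaining schedule $\ell_{q+1}+1, \ldots, m$ contains no further omissions under either procedure, so both apply exactly the same deterministic tail.

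To propagate the domination through this common tail, invoke Strassen to draw coupled samples $\xi' \sim \rho$ and $\xi \sim \rho_x^{i,j}$ with $\xi \leq \xi'$ almost surely, and then evolve both by re-using the \emph{same} Bernoulli variables at each step of the remaining schedule. The monotonicity argument underlying Lemma \ref{grand-monotonicity} is entirely local: it verifies that any single shared update preserves the pointwise ordering of height functions. Consequently it applies verbatim to the present deterministic schedule, and the ordering $\xi \leq \xi'$ is preserved throughout. The final coupled samples are distributed as $\nu''$ and $\nu'$ respectively, so $\nu'' \preceq \nu'$; combined with the induction hypothesis this yields $\nu \preceq \nu'$. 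The only subtle point is this propagation step, where one must check that Lemma \ref{grand-monotonicity}'s reasoning carries over from Poisson-driven dynamics to a deterministic schedule of updates, but this is immediate since the proof proceeds update by update using only the outcome of a single shared Bernoulli at each step.
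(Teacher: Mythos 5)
Your proof is correct. It relies on exactly the two facts the paper's (Lacoin's Lemma A.5) argument uses — that a single update of an increasing law $\rho$ satisfies $\rho_x^{i,j}\preceq\rho$ and remains increasing, and that a shared update applied to two stochastically ordered increasing laws preserves that order (which you establish inline via Strassen plus the local monotonicity proof of Lemma \ref{grand-monotonicity}) — but it organizes them differently. The paper marches forward through the schedule maintaining the invariant $\nu_r\preceq\nu_r'$ at every index $r$, handling omitted steps with the single-update lemma and shared steps with the coupling. You instead induct on the number of omissions, reducing to the case of a single omission and then splitting the schedule into a common prefix, the one omitted step, and a common omission-free tail through which you propagate the domination by an explicit monotone coupling. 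Both approaches are sound and of comparable length; your reduction-to-one-omission structure has the small advantage that the coupling argument only ever needs to be run on a clean tail with no further case analysis, whereas the paper's invariant-maintenance has to distinguish omitted from shared updates at each index. Your remark that the monotonicity proof of Lemma \ref{grand-monotonicity} is local and therefore transfers from the Poisson-clock dynamics to a deterministic update schedule is the right observation and addresses the only point that might otherwise give a reader pause.
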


\begin{proof}
    The proof is identical to that of \cite{lacoin-at}, lemma A.5.
\end{proof}

From the last lemma, theorem \ref{censoring-thm} follows exactly as in \cite{lacoin-at}, section A.2.

\subsection{Monotonicity of Projections}

In what follows, we sometimes want to lose a little bit of information but still have monotonicity be preserved. Let $R\in \bbz_+$ be arbitrary. Let $x_j = \ceil{jN/R}$ and $y_j = k\ceil{jN/R}$. Given $\sigma \in S_{k,N}$, define the semi-skeleton $\widehat{\sigma}: \{0,\ldots, N\} \times \{0,\ldots, R\} \to \bbr$ by $\widehat{\sigma}(x, j) = \tilde{\sigma}(x, y_j)$. We define the skeleton $\bar{\sigma}: \{0,\ldots,R\}^2 \to \bbr$ by $\bar{\sigma}(i,j) = \tilde{\sigma}(x_i,y_j)$. We call $\widehat{S}_{k,N}$ the set of admissible semi-skeletons and $\bar{S}_{k,N}$ the set of admissible skeletons (the images of $S_{k,N}$ under the respective maps).

Before getting into the subject of this section, we first establish a key property relating the semi-skeleton back to the k-permutations with that semi-skeleton.

First we remark that the information contained in the semi-skeleton $\widehat{\sigma}$ is exactly the value of the sets $\sigma\inv(\{y_{i-1}+1, \ldots, y_i\})$. Keeping the analogy from the introduction about $\sigma$ encoding a deck of packets of cards, $\widehat{\sigma}$ tells us how many cards of each suit each packet contains (when the deck is in order, each packet only contains one kind of suit, though several packets may contain only spades for example). Therefore, the missing information is: among each suit, which cards are where (see Figure \ref{fig:semiskel}).

\begin{figure}
    \centering
    \includegraphics[scale=0.5]{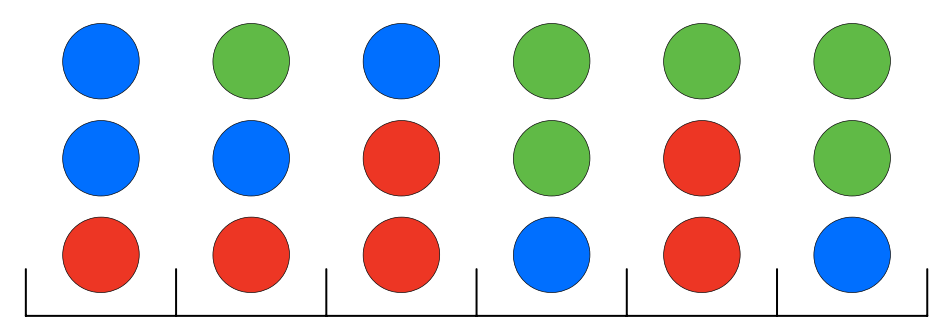}
    \caption{The information obtained from the semi-skeleton associated to the element $\sigma \in S_{3,6}$ given in Figure \ref{fig:skn-shuffle} with $R = 3$. In the image, red represents the numbers $\{1,\ldots, 6\}$, blue represents $\{7,\ldots, 12\}$, and green represents $\{13,\ldots,18\}$.}
    \label{fig:semiskel}
\end{figure}

Define an action of $S_{Nk}$ on $S_{k,N}$ as follows. For $\omega \in S_{k,N}$ we can think of its most ordered pre-image $\sigma_{\omega} \in S_{Nk}$ under $\theta$ (the ordering on $S_{Nk}$ given by that on $S_{Nk,1}$). Thus, given $\sigma \in S_{Nk}$, we define its action on $\omega$ by $\sigma \cdot \omega = \theta(\sigma \circ \sigma_{\omega})$ (see Figure \ref{fig:sigma-action} for an example). Note that while we suggestively call this an action, it is not a group action. However, this action allows us to pick out $k$-permutations which have the same semi-skeleton as follows.

\begin{figure}
    \centering
    \includegraphics[scale=0.5]{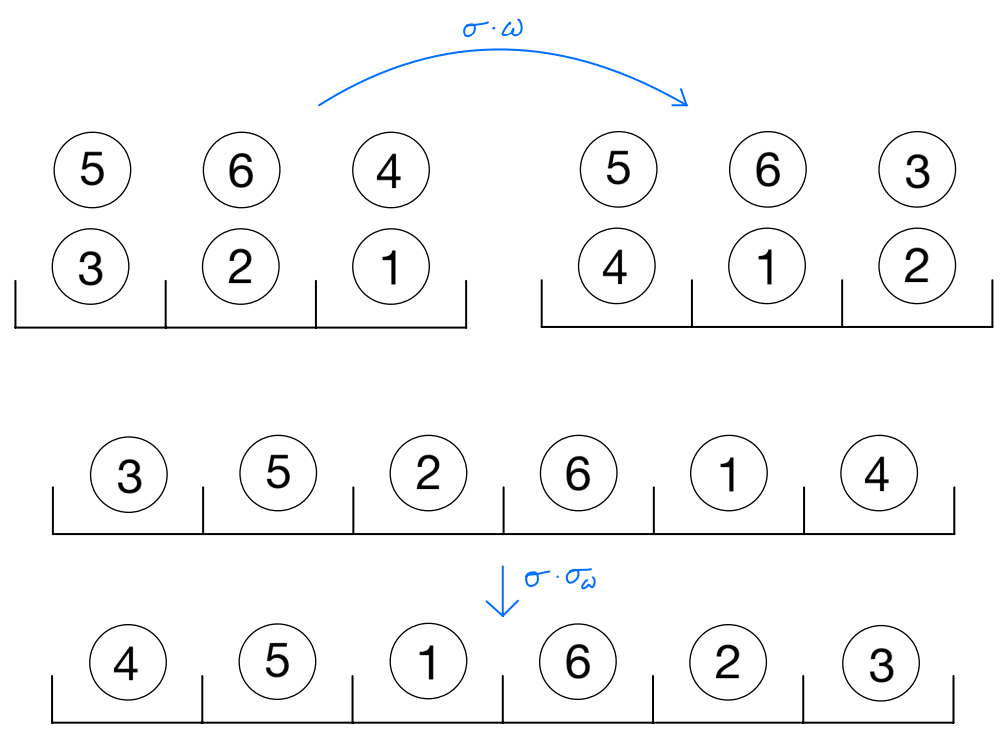}
    \caption{Example of the action $\sigma\cdot \omega$ with $R=3$. In the image $\omega \in S_{2,3}$ is taken to have $\omega(1) = \{3,5\}, \omega(2) = \{2, 6\}, \omega(3) = \{1,4\}$. Further we take $\sigma = (1\ 2)(3\ 4) \in \tilde{S}$. The top process is the action $\sigma \cdot \omega$ itself while the bottom is the underlying action $\sigma \circ \sigma_\omega$.}
    \label{fig:sigma-action}
\end{figure}

\begin{lem}\label{messy-action-lem}
    Let $\tilde{S} \cong \bigotimes_{i=1}^R S_{\Delta y_i}$ be the largest subgroup of $S_{Nk}$ which fixes the sets $\{y_{i-1}+1,\ldots,y_i\}$ for all $i\geq 1$. Also, let $G \cong S_k^N$ be the largest subgroup of $S_{Nk}$ which fixes the sets $\{xk+1,\ldots, (x+1)k\}$ for all $x \geq 0$. For any $k$-permutations $\omega$ and $\omega'$, we have that
    $$ |\sigma \in \tilde{S} : \sigma \cdot \omega = \omega'| = \begin{cases}
        0 & \text{if } \widehat{\omega}' \neq \widehat{\omega} \\
        |\tilde{S} \cap \sigma_\omega G\sigma_\omega\inv| & \text{if } \widehat{\omega}' = \widehat{\omega}.
    \end{cases} $$
\end{lem}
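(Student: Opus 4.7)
The plan is to recast the equation $\sigma \cdot \omega = \omega'$ as a coset membership condition in $S_{Nk}$ and then reduce the general count to the $\omega = \omega'$ case by an explicit translate. First I would unpack the definitions: a direct computation shows $(\sigma \cdot \omega)(x) = \sigma(\omega(x))$, and the fiber $\theta^{-1}(\omega')$ consists of exactly those $\tau \in S_{Nk}$ mapping each block $\{(x-1)k+1,\ldots,xk\}$ setwise onto $\omega'(x)$. Since $\sigma_{\omega'}\in \theta^{-1}(\omega')$ and any other element of the fiber differs from it by a permutation within each block, $\theta^{-1}(\omega') = \sigma_{\omega'} G$. Hence $\sigma \cdot \omega = \omega'$ iff $\sigma \circ \sigma_\omega \in \sigma_{\omega'} G$ iff $\sigma \in \sigma_{\omega'} G \sigma_\omega^{-1}$, so the cardinality in question is exactly $|\tilde{S} \cap \sigma_{\omega'} G \sigma_\omega^{-1}|$.

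The vanishing case is then immediate: any $\sigma \in \tilde{S}$ fixes each band $B_i := \{y_{i-1}+1,\ldots,y_i\}$ setwise, hence also fixes each union $[y_j] = B_1 \cup \cdots \cup B_j$ setwise. This gives $|\sigma(\omega(x)) \cap [y_j]| = |\omega(x) \cap [y_j]|$, so $\widehat{\sigma \cdot \omega} = \widehat{\omega}$. Consequently if $\widehat{\omega'} \neq \widehat{\omega}$ then no $\sigma \in \tilde{S}$ can satisfy $\sigma \cdot \omega = \omega'$, matching the first branch of the claim.

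The main (and only mildly delicate) step is, when $\widehat{\omega'} = \widehat{\omega}$, to exhibit a specific witness $\sigma_* \in \tilde{S}$ with $\sigma_* \cdot \omega = \omega'$. The equality of semi-skeletons rephrases as $|\omega(x) \cap B_i| = |\omega'(x) \cap B_i|$ for every $x$ and $i$, so within each band $B_i$ the two partitions $\{\omega(x) \cap B_i\}_x$ and $\{\omega'(x) \cap B_i\}_x$ of $B_i$ have matching block sizes. For each $i$, pick any bijection $\sigma_i$ of $B_i$ sending $\omega(x) \cap B_i$ to $\omega'(x) \cap B_i$ for every $x$ (choose, arbitrarily, a bijection between these two subsets for each $x$ and concatenate over $x$). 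Setting $\sigma_* := \prod_i \sigma_i \in \tilde{S} \cong \prod_i S_{\Delta y_i}$ gives $\sigma_*(\omega(x)) = \bigsqcup_i \sigma_i(\omega(x) \cap B_i) = \bigsqcup_i (\omega'(x) \cap B_i) = \omega'(x)$, so $\sigma_* \cdot \omega = \omega'$.

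Finally, given such a $\sigma_*$, the relation $\sigma_* \circ \sigma_\omega \in \sigma_{\omega'} G$ gives $\sigma_{\omega'} G = \sigma_* \sigma_\omega G$, whence $\sigma_{\omega'} G \sigma_\omega^{-1} = \sigma_*\bigl(\sigma_\omega G \sigma_\omega^{-1}\bigr)$. Because $\sigma_* \in \tilde{S}$, left multiplication by $\sigma_*^{-1}$ is a bijection $\tilde{S} \to \tilde{S}$ carrying $\tilde{S} \cap \sigma_{\omega'} G \sigma_\omega^{-1}$ onto $\tilde{S} \cap \sigma_\omega G \sigma_\omega^{-1}$, yielding the claimed equality of cardinalities. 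The main obstacle is the construction of $\sigma_*$; everything else is an orbit-stabilizer style bookkeeping argument, adapted to the fact that $\cdot$ is not literally a group action but the preimage description $\theta^{-1}(\omega') = \sigma_{\omega'} G$ is enough to rescue the coset manipulation.
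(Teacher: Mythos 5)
Your proof is correct and follows the paper's overall strategy closely: both arguments reduce the count to $|\tilde{S}\cap\sigma_{\omega'}G\sigma_\omega^{-1}|$ via the observation that $\theta^{-1}(\omega')=\sigma_{\omega'}G$, and both finish by exhibiting some $\sigma_*\in\tilde{S}$ with $\sigma_*\sigma_\omega G=\sigma_{\omega'}G$, so that left-translation by $\sigma_*^{-1}$ is a bijection from $\tilde{S}\cap\sigma_{\omega'}G\sigma_\omega^{-1}$ onto $\tilde{S}\cap\sigma_\omega G\sigma_\omega^{-1}$.

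The one genuine difference is in how the witness $\sigma_*$ is produced. The paper takes the canonical choice $\sigma_*=\sigma_{\omega'}\sigma_\omega^{-1}$ and argues that the definition of the \emph{most ordered} pre-image, together with the equality $\widehat{\omega}=\widehat{\omega}'$, forces $\sigma_{\omega'}^{-1}(B_i)=\sigma_\omega^{-1}(B_i)$ for each band $B_i=\{y_{i-1}+1,\ldots,y_i\}$, hence $\sigma_{\omega'}\sigma_\omega^{-1}\in\tilde{S}$. You instead build $\sigma_*$ band-by-band, picking for each $i$ an arbitrary bijection of $B_i$ carrying $\omega(x)\cap B_i$ to $\omega'(x)\cap B_i$ for every $x$ (possible because equal semi-skeletons give equal block sizes), and taking the product over $i$. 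Your construction avoids any appeal to the particular ordering convention underlying $\sigma_\omega$, which makes the argument a little more self-contained and robust; the paper's choice buys a slightly shorter statement (the witness is already named) at the cost of an extra bookkeeping lemma about how the ``most ordered'' representative interacts with semi-skeletons. Either way, the coset-translation step is identical.
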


\begin{proof}
    First note that $\widehat{\sigma \cdot \omega} = \widehat{\omega}$ for every $\sigma \in \tilde{S}$ because of how we defined $\tilde{S}$, so for the rest of the proof, we can assume $\widehat{\omega}' = \widehat{\omega}$. The lemma requires some understanding of the action $\sigma \cdot \omega$ and thus $\theta$. If we have $\sigma_1,\sigma_2 \in S_{Nk}$ such that $\theta(\sigma_1) = \theta(\sigma_2)$, then by definition $\sigma_1(\{xk+1,\ldots, (x+1)k\}) = \sigma_2(\{xk+1,\ldots,(x+1)k\})$ for all $0\leq x \leq N-1$. However, this means that $\sigma_1\inv\sigma_2$ leaves the sets $\{xk+1,\ldots,(x+1)k\}$ invariant so that $\sigma_1G = \sigma_2G$. Conversely, it is straightforward that $\theta$ is constant on cosets of $G$. Therefore, $\theta$ gives a bijection between the cosets of $G$ and $S_{k,N}$.
    
    Now, we want to produce some $\sigma \in \tilde{S}$ such that $\sigma \cdot \omega = \omega'$ i.e. such that $\theta(\sigma \circ \sigma_\omega) = \omega'$. From our analysis, we know there is a unique coset $\sigma'G$ such that $\theta(\sigma'G) = \omega'$. We can let $\sigma_{\omega'}$ be the coset representative here, and thus we want to find $\sigma \in \tilde{S}$ such that $\sigma\sigma_\omega \in \sigma_{\omega'}G$. Therefore, we can alternatively express $\{\sigma \in \tilde{S} : \sigma \cdot \omega = \omega'\} = \tilde{S} \cap \sigma_{\omega'}G\sigma_{\omega}\inv$.
    
    Our next claim is that $\sigma_{\omega'}\sigma_{\omega}\inv \in \tilde{S}$. Indeed, since $\widehat{\omega}' = \widehat{\omega}$, we have that $\widehat{\sigma}_\omega(xk, i) = \widehat{\omega}(x,i) = \widehat{\omega}'(x,i) = \widehat{\sigma}_{\omega'}(xk,i)$ for all $x$ and $i$. But because of how we picked $\sigma_\omega$ and $\sigma_\omega'$, this last fact implies that $\widehat{\sigma}_\omega(x, i) = \widehat{\sigma}_{\omega'}(x, i)$ for all $x$ and $i$. Now from each semi-skeleton we can get the information $|\sigma(x) \cap \{y_{i-1}+1,\ldots, y_i\}|$, and thus $\sigma_{\omega'}\inv(\{y_{i-1}+1,\ldots, y_i\}) = \sigma_\omega\inv(\{y_{i-1}+1,\ldots, y_i\})$ for all $i$.
    
    Therefore, $\tilde{S} \cap \sigma_{\omega'}G\sigma_\omega\inv$ is just the coset $(\sigma_{\omega'}\sigma_\omega\inv)(\tilde{S}\cap \sigma_\omega G \sigma_\omega\inv)$ and it is now immediate that $|\sigma \in \tilde{S} : \sigma \cdot \omega = \omega'| = |\tilde{S} \cap \sigma_{\omega'}G\sigma_\omega\inv| = |\tilde{S}\cap\sigma_{\omega}G\sigma_\omega\inv|$ for every $\omega'$ such that $\widehat{\omega}' = \widehat{\omega}$.
\end{proof}

Now proceeding to the subject of this section, we equip $\bar{S}_{k,N}$ and $\widehat{S}_{k,N}$ with the point-wise partial ordering for real functions (for $f,g: X \to \bbr$, we say $f \geq g$ if $f(x) \geq g(x)$ for all $x$).

Given a probability measure $\nu$ on $S_{k,N}$, we write $\widehat{\nu}$ and $\bar{\nu}$ for the projections on $\widehat{S}_{k,N}$ and $\bar{S}_{k,N}$ respectively. Also, we write $\bar{\nu}_{i,j}$ for the image measure on the projection $\sigma \mapsto \bar{\sigma}(i,j)$.

\begin{thm}
    We have the following.
    \begin{enumerate}[label=(\roman*)]
        \item If $\bar{\sigma}^1 \geq \bar{\sigma}^2$ in $S_{k,N}$, then $\mu(\cdot \mid \bar{\sigma} = \bar{\sigma}^1) \succeq \mu(\cdot \mid \bar{\sigma} = \bar{\sigma}^2)$.
        \item If $(i,j) \in \{0,\ldots,R\}^2$ and $z_1 \geq z_2$ are two admissible values for $\bar{\sigma}(i,j)$, then we have $\mu(\cdot \mid \bar{\sigma}(i,j) = z_1) \succeq \mu(\cdot \mid \bar{\sigma}(i,j) = z_2)$.
        \item If $\nu$ is an increasing probability measure on $S_{k,N}$, then the density $\bar{\nu}/\bar{\mu}$ is an increasing function on $\bar{S}_{k,N}$.
        \item If $\nu$ is an increasing probability measure on $S_{k,N}$, then $\bar{\nu}_{i,j}/\bar{\mu}_{i,j}$ is an increasing function on the set of admissible values for $\bar{\sigma}(i,j)$.
    \end{enumerate}
\end{thm}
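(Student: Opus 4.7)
The four statements form a hierarchy: (iii) and (iv) are standard consequences of (i) and (ii) respectively, and (ii) reduces to (i) by marginalization, so the substantive content is (i).

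For (iii), let $f := d\nu/d\mu$; since $\nu$ is an increasing probability measure, $f$ is an increasing function on $S_{k,N}$, and
\[
\frac{\bar{\nu}(\bar{\sigma}^0)}{\bar{\mu}(\bar{\sigma}^0)} \;=\; \E_\mu\!\left[f \bigm| \bar{\sigma}=\bar{\sigma}^0\right].
\]
By (i), the conditional measures $\mu(\cdot \mid \bar{\sigma}=\bar{\sigma}^0)$ are stochastically monotone in $\bar{\sigma}^0$, so their expectations of the increasing function $f$ are monotone, yielding (iii). Statement (iv) follows from (ii) in exactly the same way. For (ii), write
\[
\mu(\cdot \mid \bar{\sigma}(i,j)=z) \;=\; \sum_{\bar{\sigma}^0:\,\bar{\sigma}^0(i,j)=z} \mu(\bar{\sigma}=\bar{\sigma}^0 \mid \bar{\sigma}(i,j)=z)\,\mu(\cdot \mid \bar{\sigma}=\bar{\sigma}^0),
\]
and combine (i) with stochastic monotonicity in $z$ of the mixing distribution $\mu(\bar{\sigma} \mid \bar{\sigma}(i,j)=z)$ on $\bar{S}_{k,N}$, which itself follows from the same dynamical argument used for (i) but restricted to the skeleton lattice.

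The plan for (i) is the following. Fix admissible $\bar{\sigma}^1 \geq \bar{\sigma}^2$ and construct a monotone coupling of $\mu(\cdot \mid \bar{\sigma}=\bar{\sigma}^1)$ and $\mu(\cdot \mid \bar{\sigma}=\bar{\sigma}^2)$ using the grand coupling of section \ref{grand-coupling-section}. First, produce ordered representatives $\xi^1 \geq \xi^2$ with $\bar{\xi}^\ell = \bar{\sigma}^\ell$: each level set $\{\sigma : \bar{\sigma}=\bar{\sigma}^0\}$ admits a unique maximum under the height-function order (the ``most sorted'' $k$-permutation compatible with $\bar{\sigma}^0$), and these maxima inherit the partial order of their skeletons. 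Starting from $(\xi^1,\xi^2)$, run the grand coupling with a censoring rule that keeps each chain on its level set while preserving the partial order $\sigma^1_t \geq \sigma^2_t$. The restricted dynamics on each level set is irreducible and reversible with respect to $\mu$, so each marginal converges to $\mu(\cdot \mid \bar{\sigma}=\bar{\sigma}^\ell)$ as $t\to\infty$; the limiting joint law is the desired monotone coupling.

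The main obstacle is designing the censoring to simultaneously keep each chain on its level set \emph{and} preserve the order. The delicate case is a proposed swap $\tau_x^{i,j}$ that preserves $\bar{\sigma}^1$ but violates $\bar{\sigma}^2$ (or vice versa): applying it to only one chain can break monotonicity. Using the explicit local effect of a swap on $\tilde{\sigma}^\ell(x,y)$ described in section \ref{grand-coupling-section}---an update at column $x$ alters $\tilde{\sigma}^\ell(x,y)$ only in a specific band of $y$-values, and skeleton changes correspond precisely to the crossings with rows $y=y_{j^*}$---one narrows down the bad configurations and applies a symmetric joint-censoring that resolves them; reversibility of $\mu$ on each level set ensures that the marginal stationary distributions are unchanged, and irreducibility of the jointly censored dynamics is verified by a connectivity argument on skeleton-preserving swap sequences.
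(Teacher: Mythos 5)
Your high-level structure for (iii) and (iv) (integrate the stochastic dominances in (i), (ii) against the increasing density $\nu/\mu$) matches the paper, and your mixture reduction of (ii) to (i) is a reasonable alternative, though it imports an extra sub-lemma (stochastic monotonicity in $z$ of the mixing law on $\bar{S}_{k,N}$) that the paper sidesteps by just re-running its argument for (ii). The real divergence --- and the real problem --- is in (i).

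You propose to prove (i) by running a reflected version of the grand coupling directly on the level sets $\{\sigma \in S_{k,N} : \bar{\sigma} = \bar{\sigma}^\ell\}$. There are two natural censoring rules here and both fail. If you censor all clocks at the boundary columns $x_1,\dots,x_{R-1}$ (the obvious way to freeze the skeleton), the chain is \emph{not} irreducible on $\{\bar{\sigma} = \bar{\sigma}^0\}$: cards can never cross a block boundary, so the exact set of card labels in each block $\{x_{\ell-1}+1,\dots,x_\ell\}$ is an invariant of the dynamics, a much finer conserved quantity than the skeleton. The stationary law of this censored chain is $\mu$ conditioned on the block contents, not $\mu(\cdot\mid\bar{\sigma}=\bar{\sigma}^0)$. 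If instead you use the adaptive censoring (``cancel a swap iff it would move $\bar{\sigma}$''), the cancellation decision is configuration-dependent and can differ between the two coupled chains. You identify this as the obstacle but then wave at a ``symmetric joint-censoring'' and assert both order-preservation \emph{and} that ``reversibility of $\mu$ on each level set ensures that the marginal stationary distributions are unchanged.'' These two desiderata conflict: a joint censoring rule that cancels for both chains whenever either would leave its level set does preserve the order, but then the marginal transition rates of each chain depend on the state of the other, so neither marginal is the reflected $\mu$-reversible chain on its own level set, and neither marginal stationary law is identified. The competing option --- cancel each chain independently --- restores the correct marginals but, precisely in the case you flag (update cancelled for one chain and not the other, with $\tilde{\sigma}^1(x,y)=\tilde{\sigma}^2(x,y)$ at some $y$ in the affected band), can invert the order. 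Nothing in your sketch resolves this; it is not a detail to be filled in, it is the crux.

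The paper avoids the issue entirely by factoring through the semi-skeleton. It first proves $\mu(\cdot\mid\widehat{\sigma}=\xi_1)\succeq\mu(\cdot\mid\widehat{\sigma}=\xi_2)$ for $\xi_1\ge\xi_2$ by an explicit \emph{static} coupling: draw one $\rho$ uniformly from the suit-preserving subgroup $\tilde S\subset S_{Nk}$, pick for each $\xi$ the canonical within-suit-sorted representative $\sigma_\xi$, and show that $\rho\cdot\sigma_{\xi_1}\ge\rho\cdot\sigma_{\xi_2}$. Lemma \ref{messy-action-lem} is exactly what guarantees this produces the right conditional law; no dynamics and no censoring are involved, so the card-identity freezing problem never appears. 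It then proves $\widehat{\mu}(\cdot\mid\bar{\sigma}=\bar{\sigma}^1)\succeq\widehat{\mu}(\cdot\mid\bar{\sigma}=\bar{\sigma}^2)$ by a reflected dynamics, but on the \emph{semi-skeleton} chain $\widehat{S}_{k,N}$, where censoring the columns $x_\ell$ does give an irreducible, $\widehat{\mu}$-reversible chain on each skeleton level set (semi-skeletons do not record card identities, so there is no frozen-block invariant), and the grand coupling's monotonicity passes through unchanged because both chains are censored identically. Statement (i) then follows by the two-step conditional-expectation argument ($f\mapsto\widehat f\mapsto$ expectation over $\widehat\mu(\cdot\mid\bar\sigma)$) that the paper spells out. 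In short, the decomposition through $\widehat{S}_{k,N}$ is not a cosmetic intermediate step --- it is what makes the reflected-chain argument sound. Your proposal collapses the two levels into one and inherits an obstruction it does not overcome.
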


Note that the latter two items are just a product of integrating the first two items against the increasing function $\nu/\mu$. We prove (i) and remark that essentially the same proof works for (ii). To this end, we introduce two lemmas.

\begin{lem}\label{monotone-lem-1}
    If $\xi_1 \geq \xi_2$ in $\widehat{S}_{k,N}$ then $\mu(\cdot \mid \widehat{\sigma} = \xi_1) \succeq \mu(\cdot \mid \widehat{\sigma} = \xi_2)$.
\end{lem}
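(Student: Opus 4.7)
The plan is to construct an explicit monotone coupling of $\mu(\cdot \mid \widehat{\sigma} = \xi_1)$ and $\mu(\cdot \mid \widehat{\sigma} = \xi_2)$ using the action of $\tilde{S}$ on $S_{k,N}$ introduced before Lemma \ref{messy-action-lem}. That lemma gives that, for any fixed $\omega \in S_{k,N}$, the push-forward of the uniform measure on $\tilde{S}$ under $\sigma \mapsto \sigma \cdot \omega$ is exactly $\mu(\cdot \mid \widehat{\sigma} = \widehat{\omega})$. So it will suffice to produce representatives $\omega_1, \omega_2$ with $\widehat{\omega}_l = \xi_l$ such that $\sigma \cdot \omega_1 \geq \sigma \cdot \omega_2$ for every $\sigma \in \tilde{S}$; sampling a single uniform $\sigma$ and outputting the pair $(\sigma \cdot \omega_1, \sigma \cdot \omega_2)$ then realizes the desired monotone coupling.

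The right choice is to take $\omega_l$ to be the ``column-ordered'' representative of its fiber: within each block $J$ (the labels $\{y_{J-1}+1,\ldots,y_J\}$), place the smallest block-$J$ cards in the leftmost packets and fill left-to-right according to the block counts $c^l_{x,J}$ dictated by $\xi_l$. Since every $\sigma \in \tilde{S}$ fixes each block setwise, a direct unfolding of $\sigma \cdot \omega_l(x) = \sigma(\omega_l(x))$ yields, for $y$ with $y_{J-1}<y\leq y_J$, a formula of the shape
\[
\widetilde{\sigma\cdot\omega_l}(x,y) = \xi_l(x,J-1) + \abs{T_y \cap [c^{l,<}_{x,J}]} - \frac{x(y-y_{J-1})}{N},
\]
where $c^{l,<}_{x,J} = \xi_l(x,J)-\xi_l(x,J-1)+x\Delta y_J/N$ is the block-$J$ prefix count and $T_y \subseteq [\Delta y_J]$ is the subset determined by the restriction of $\sigma$ to block $J$. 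Crucially, $T_y$ is the \emph{same} subset in both copies; only the prefix counts and the boundary value depend on $l$.

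Writing $E_x := \xi_1(x,J-1) - \xi_2(x,J-1) \geq 0$ and $D_x := \xi_1(x,J) - \xi_2(x,J) \geq 0$, the target inequality $\widetilde{\sigma \cdot \omega_1}(x,y) \geq \widetilde{\sigma \cdot \omega_2}(x,y)$ reduces to $E_x + (\abs{T_y \cap [c^{1,<}_{x,J}]} - \abs{T_y \cap [c^{2,<}_{x,J}]}) \geq 0$. Since $c^{1,<}_{x,J} - c^{2,<}_{x,J} = D_x - E_x$, a short case split on its sign finishes the argument: when $c^{1,<}_{x,J} \geq c^{2,<}_{x,J}$ the parenthesized term is nonnegative, giving a total of at least $E_x \geq 0$; otherwise it is bounded below by $-(c^{2,<}_{x,J} - c^{1,<}_{x,J}) = -(E_x - D_x)$, leaving a total of at least $D_x \geq 0$.

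The main obstacle, and the reason a careful choice of representatives is needed, is that the action $\sigma \cdot \omega$ is \emph{not} order-preserving in $\omega$ for generic representatives; it is easy to find $\sigma \in \tilde{S}$ that inverts the height ordering of two unstructured ordered pairs. The argument succeeds only because the column-ordered choice forces the within-block randomness of $\sigma$ to appear through the same subset $T_y$ in both copies, converting the monotonicity statement into a simple subset-intersection inequality controlled entirely by the semi-skeleton slacks $E_x$ and $D_x$.
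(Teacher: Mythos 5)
Your proposal is correct and follows essentially the same route as the paper: both sample a single uniform element of $\tilde{S}$, act on a within-block-sorted (``column-ordered'') representative of each fiber, invoke Lemma \ref{messy-action-lem} to identify the resulting law with the conditional measure, and then verify monotonicity by the same two-case comparison of block prefix counts (your $E_x$, $D_x$, $c^{l,<}_{x,J}$ match the paper's $A_1-A_2$, $A_1+B_1-A_2-B_2$, $B_i$). Your closing remark about why the column-ordered representative is essential is a nice explicit articulation of a point the paper leaves implicit.
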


\begin{proof}
    We will prove this by giving an explicit coupling, our motivation being lemma \ref{messy-action-lem}. Indeed, pick $\rho \in \tilde{S}$ uniformly at random. Then for any $\xi$ pick $\sigma \in S_{k,N}$ such that $\widehat{\sigma} = \xi$ and such that $\sigma\inv(y_{i-1}+1,\ldots,y_i)$ is in order for all $i$ (that is, $\sigma\inv(a) \leq \sigma\inv(b)$ if $y_{i-1} < a < b \leq y_i$). Then we let $\sigma^\rho_{\xi} = \rho \cdot \sigma$. By lemma \ref{messy-action-lem}, $\sigma^\rho_\xi$ is uniform in $\mu(\cdot \mid \widehat{\sigma} = \xi)$ as desired.

    Secondly we would like to show that $\sigma^\rho_{\xi_1} \geq \sigma^\rho_{\xi_2}$. For convenience, write $\sigma_1 = \sigma^\rho_{\xi_1}$ and $\sigma_2 = \sigma^\rho_{\xi'}$. What we need to check is that $\tilde{\sigma}_1(x, y) \geq \tilde{\sigma}_2(x,y)$, or equivalently, $\sum_{z=1}^x |\sigma_1(z) \cap [y]| \geq \sum_{z=1}^x |\sigma_2(z) \cap [y]|$ for all $x,y$. WLOG let $y_{j-1} < y \leq y_j$. Let $A_i = \sum_{z=1}^{x} |\sigma_i(z) \cap [y_{j-1}]|$, let $B_i = \sum_{z=1}^x |\sigma_i(z) \cap \{y_{j-1}+1, \ldots, y_j\}|$, and let $C_i = \sum_{z=1}^x |\sigma_i(z) \cap \{y_{j-1}+1, \ldots, y\}|$. Framed in this way, we must check that $A_1+C_1 \geq A_2+C_2$, that is, that $A_1-A_2 \geq C_2-C_1$. Since $\xi_1 \geq \xi_2$, we know that $A_1 \geq A_2$ and $A_1+B_1 \geq A_2+B_2$, so that $A_1-A_2 \geq 0$ and $A_1-A_2 \geq B_2-B_1$.

    From our coupling, we may recall that $\tilde{S} \cong \bigotimes_{i=1}^R S_{\Delta y_i}$ and therefore write $\rho = (\rho_1, \ldots, \rho_R)$. Noting that $(\sigma_{\xi_i}^\rho)\inv(\{y_{j-1}+1,\ldots, y_j\})$ is governed precisely by $\rho_j$, we can express $C_i$ as $\sum_{z=1}^{B_i} 1_{\rho_j(z) \leq y} = |\rho_j([B_i]) \cap [y]|$. Therefore, $C_2-C_1 \leq 0 \leq A_1-A_2$ if $B_1 \geq B_2$, and otherwise $C_2-C_1 = |\sigma_j(\{B_1+1, \ldots, B_2\}) \cap [y]| \leq B_2-B_1 \leq A_1-A_2$. So indeed, $\sigma_\xi \geq \sigma_{\xi'}$.
\end{proof}

\begin{lem}\label{monotone-lem-2}
    If $\bar{\sigma}^1 \geq \bar{\sigma}^2$ in $\bar{S}_{k,N}$ then $\widehat{\mu}(\cdot \mid \bar{\sigma} = \bar{\sigma}^1) \succeq \widehat{\mu}(\cdot \mid \bar{\sigma} = \bar{\sigma}^2)$.
\end{lem}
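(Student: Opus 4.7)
My plan is to construct an explicit monotone coupling of $\widehat{\mu}(\cdot\mid\bar{\sigma}=\bar{\sigma}^1)$ and $\widehat{\mu}(\cdot\mid\bar{\sigma}=\bar{\sigma}^2)$, paralleling the strategy of Lemma \ref{monotone-lem-1}. First I would reduce to coupling within a single $x$-block: writing $n_j(x):=|\sigma(x)\cap(y_{j-1},y_j]|$, the semi-skeleton is equivalent to the count matrix $(n_j(x))_{x\in[N],\,j\in[R]}$, and a direct count shows the conditional density given the skeleton is proportional to $\prod_{x,j}1/n_j(x)!$ on the set of matrices with row sums $k$ and block sums $M_{i,j}=\bar{\sigma}(i,j)-\bar{\sigma}(i-1,j)+L_iy_j/N$ (where $L_i=x_i-x_{i-1}$) prescribed by the skeleton. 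This factorizes over $x$-blocks, so it suffices to couple block-by-block and then use independence across blocks.

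Within block $i$, the conditional distribution of $(n_j(x))_{x_{i-1}<x\le x_i,\,j}$ equals the pushforward of the uniform distribution on words $W\in[R]^{kL_i}$ with letter $j$ appearing $M_{i,j}$ times, under the map that reads $n_j(x)$ as the count of letter $j$ in the $(x-x_{i-1})$-th chunk of $k$ consecutive positions. Such a word can be sampled by picking uniform $\pi\in S_{kL_i}$ and setting $W_\ell=j$ iff $D_{j-1}<\pi(\ell)\le D_j$, where $D_j:=\sum_{j'\le j}M_{i,j'}$. The coupling is then to use the same $\pi$ for both $\bar{\sigma}^1$ and $\bar{\sigma}^2$, with their respective thresholds $D^1_j$ and $D^2_j$.

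To verify the resulting semi-skeletons satisfy $\widehat{\sigma}^1\ge\widehat{\sigma}^2$ pointwise, I would write $\widehat{\sigma}^\ell(x,j)=\bar{\sigma}^\ell(i-1,j)+V^{\ell,j}_x-(x-x_{i-1})y_j/N$ with $V^{\ell,j}_x:=|\{\ell'\le k(x-x_{i-1}):\pi(\ell')\le D^\ell_j\}|$, so that $\widehat{\sigma}^1(x,j)-\widehat{\sigma}^2(x,j)=\Delta_j+V^{1,j}_x-V^{2,j}_x$ where $\Delta_j:=\bar{\sigma}^1(i-1,j)-\bar{\sigma}^2(i-1,j)\ge 0$. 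If $D^1_j\ge D^2_j$ then $V^{1,j}_x\ge V^{2,j}_x$ and the difference is $\ge\Delta_j\ge 0$. If $D^1_j<D^2_j$ then $V^{2,j}_x-V^{1,j}_x\le D^2_j-D^1_j=\Delta_j-\Delta'_j$ with $\Delta'_j:=\bar{\sigma}^1(i,j)-\bar{\sigma}^2(i,j)\ge 0$, so the difference is $\ge\Delta'_j\ge 0$.

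The main obstacle is that the cell-level multiplicities $M^\ell_{i,j}$ need not be componentwise ordered even when the skeletons are, so a naive coupling at the cell level fails outright. The key observation that makes the shared-$\pi$ coupling work is that any deficit $D^2_j-D^1_j$ in the cumulative letter count on the word side is always absorbed by the surplus $\Delta_j$ at the left endpoint of the block, which is guaranteed by the right-boundary inequality $\bar{\sigma}^1(i,j)\ge\bar{\sigma}^2(i,j)$. Independence across blocks then assembles the block-wise couplings into the desired monotone coupling realizing the stochastic domination.
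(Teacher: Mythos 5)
Your proof is correct, but it takes a genuinely different route from the paper's. The paper proves Lemma \ref{monotone-lem-2} dynamically: it identifies the maximal element $\xi^i$ of each set $\widehat{S}^i := \{\xi : \bar\xi = \bar\sigma^i\}$ (by sorting cards within each $x$-block and checking $\xi^1 \ge \xi^2$), then runs the censored grand coupling---ignoring all clocks at the block boundaries $x_\ell$---started from $\xi^1$ and $\xi^2$. These reflected chains stay inside $\widehat S^1, \widehat S^2$, are irreducible and reversible with respect to $\widehat\mu(\cdot\mid\bar\xi=\bar\sigma^i)$, and remain ordered for all $t$ by Lemma \ref{grand-monotonicity}; sending $t\to\infty$ and invoking the domination lemma yields the claim. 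You instead build a one-shot static coupling: you compute that the conditional law on count matrices $(n_j(x))$ given the skeleton has density $\propto \prod_{x,j} 1/n_j(x)!$, observe that this factorizes over $x$-blocks, and inside each block realize the conditional law as a deterministic image of a single uniform $\pi\in S_{kL_i}$ with block-dependent thresholds $D^\ell_j$. The monotonicity check then becomes a short arithmetic verification using $D^2_j - D^1_j = \Delta_j - \Delta'_j$. Your route is closer in spirit to the paper's own proof of Lemma \ref{monotone-lem-1} (which also couples via a shared uniform random group element) than to its proof of \ref{monotone-lem-2}, and it avoids invoking irreducibility, reversibility, and convergence of the reflected dynamics, at the cost of working out the conditional density and its block factorization explicitly. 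Both arguments are sound; yours is arguably the more elementary and self-contained of the two.
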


\begin{proof}
    The proof is nearly identical to that of \cite{lacoin-at}, equation (A.10).
\end{proof}

With these two lemmas, the proof of the theorem is rather straightforward.

\begin{proof}
    Let $f$ be an increasing function on $S_{k,N}$. We define its projection $\widehat{f}$ on $\widehat{S}_{k,N}$ by averaging over the relevant $k$-permutations, i.e. $\widehat{f}(\xi) = \mu(f(\sigma) \mid \widehat{\sigma} = \xi)$. By lemma \ref{monotone-lem-1}, we know that $\widehat{f}$ is an increasing function on $\widehat{S}_{k,N}$. Finally, if $\bar{\sigma}^1 \geq \bar{\sigma}^2$ in $\bar{S}_{k,N}$, then by lemma \ref{monotone-lem-2}, we have $\mu(f(\sigma) \mid \bar{\sigma} = \bar{\sigma}^1) = \widehat{\mu}(\widehat{f}(\xi) \mid \bar{\xi} = \bar{\sigma}^1) \geq \widehat{\mu}(\widehat{f}(\xi) \mid \bar{\xi} = \bar{\sigma}^2) = \mu(f(\sigma) \mid \bar{\sigma} = \bar{\sigma}^2)$.
\end{proof}


\section{Tools for the Upper Bound}

\subsection{Heat Equation}

Let us compute the expected drift of the height function $\partial_t \E[\tilde{\sigma}_t(x,y)]$. Suppose that $|\sigma_{t^-}(x) \cap [y]| = a$ and $|\sigma_{t^-}(x+1) \cap [y]| = b$. Now suppose $t = \ca{T}^{x,i,j}$ for some $i,j$ (this is the only case where $\tilde{\sigma}(x,y)$ will change). If $(M_1, M_2) = (|\sigma_{t}(x) \cap [y]|, |\sigma_{t}(x+1) \cap [y]|)$ then we have
$$ (M_1, M_2) = \begin{cases}
    (a+1, b-1) & \text{if } i<a, j\leq b, U^{x,i,j} = 1 \\
    (a-1, b+1) & \text{if } i\leq a, j > b, U^{x,i,j} = 0 \\
    (a,b) & \text{otherwise.}
\end{cases} $$

\begin{figure}
    \centering
    \includegraphics[scale=0.5]{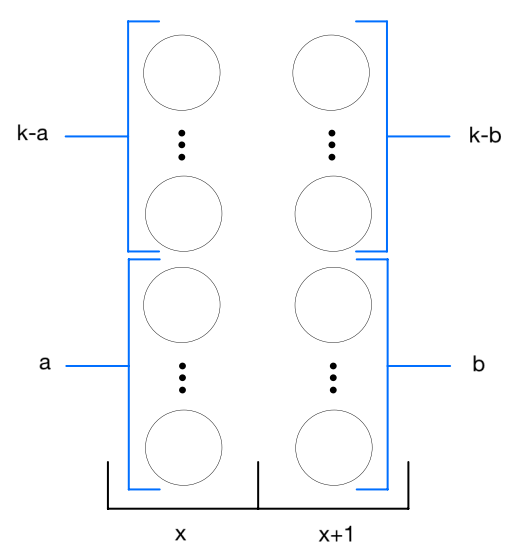}
    \caption{If one of the clocks at site $x$ rings, $|\sigma(x) \cap [y]|$ will change only if an element $\leq y$ is picked at one of the sites $\{x,x+1\}$, and an element $> y$ is picked at the other.}
    \label{fig:heat}
\end{figure}

See Figure \ref{fig:heat} for clarification. Thus, $M_1 = \tilde{\sigma}_t(x,y)-\tilde{\sigma}_t(x-1,y)+\frac{y}{N}$ increases by 1 with rate $b(k-a)$ and decreases by 1 with rate $a(k-b)$. In particular, if we let $f = \E[\tilde{\sigma}_t(x,y)]$, then $\partial_t f = b(k-a)-a(k-b) = k(b-a)$. However, $b$ is precisely $\tilde{\sigma}_t(x+1,y)-\tilde{\sigma}_t(x,y)+\frac{y}{N}$ while $a$ is $\tilde{\sigma}_t(x,y)-\tilde{\sigma}_t(x-1,y) + \frac{y}{N}$. Therefore, $\partial_t f = k\E[\tilde{\sigma}_t(x+1,y)-2\tilde{\sigma}_t(x,y)+\tilde{\sigma}_t(x-1,y)] = k\Delta_x f$ where $\Delta_x g = g(x+1)-2g(x)+g(x-1)$ is the discrete Laplacian acting on the $x$ coordinate.

By fixing $y$ and considering the average height function $f(x,t) = \E[\tilde{\sigma}_t(x,y)]$ as only a function of $x$ and $t$, this means that it actually satisfies the following heat equation:
$$ \begin{cases}
    \partial_t f = k\Delta_x f & \text{for } x=1,\ldots,N-1 \text{ and } t>0 \\
    f(0, t) = f(N, t) = 0 & \text{for } t\geq 0 \\
    f(x, 0) = \tilde{\sigma}_0(x,y) & \text{for } x=0,\ldots,N.
\end{cases} $$

We thus get the following result.

\begin{prop}\label{heat-prop}
    For all $\sigma_0 \in S_{k,N}$ and $t \geq 0$, we have
    $$ \max_x \E[\tilde{\sigma}_t(x,y)] \leq 4\min(y, kN-y)e^{-\lambda_{k,N}t}$$
    where $\lambda_{k,N} = 2k(1-\cos(\pi/N)) = k\pi^2/N^2 (1+o(1))$ (as $N \to \infty$). In particular,
    $$ \max_{x,y} \E[\tilde{\sigma}_t(x,y)] \leq 2kNe^{-\lambda_{k,N}t}. $$
    And further, when $\sigma_0 = \mathbf{1}$, we have
    $$ \E[\tilde{\sigma}_t(x,y)] \geq \frac{\min(y, kN-y)}{\pi}\sin\p{\frac{x\pi}{N}}e^{-\lambda_{k,N}t}. $$
\end{prop}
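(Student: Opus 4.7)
The expected height $f(x,t) := \E[\tilde{\sigma}_t(x,y)]$ satisfies the discrete heat equation $\partial_t f = k\Delta_x f$ on $\{1,\ldots,N-1\}$ with Dirichlet boundary data (zero at $x=0$ and $x=N$), as derived in the preceding discussion. The overall plan is to exploit this through a killed random walk representation for the upper bound and explicit sine sub/supersolution comparisons for both bounds. The elementary pointwise estimate
\[
|\tilde{\sigma}_0(x,y)| \leq \min\!\left(\frac{x(kN-y)}{N},\, \frac{y(N-x)}{N}\right) \leq \min(y, kN-y),
\]
which follows from $\max(0, y-(N-x)k) \leq \sum_{z=1}^x |\sigma_0(z) \cap [y]| \leq \min(xk, y)$, will serve as the basic bound on the initial data.

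For the upper bound I will represent $f(x,t) = \E_x\!\left[\tilde{\sigma}_0(X_t, y)\mathbf{1}_{\tau > t}\right]$, where $X_t$ is the continuous-time nearest-neighbor walk on $\{1,\ldots,N-1\}$ with rate $k$ per edge and $\tau$ is its first exit time to $\{0, N\}$. This reduces the task to showing $P_x(\tau > t) \leq 4 e^{-\lambda_{k,N} t}$. I will split into two regimes: the bound is trivial when $4 e^{-\lambda_{k,N} t} \geq 1$ since $P_x(\tau > t) \leq 1$, and for larger $t$ I will use the spectral formula
\[
P_x(\tau > t) = \frac{2}{N}\sum_{j\text{ odd}} \cot\!\left(\frac{j\pi}{2N}\right) \sin\!\left(\frac{j\pi x}{N}\right) e^{-\lambda_{k,N,j} t},
\]
obtained from $\sum_y p_t(x,y)$ for the heat kernel $p_t$, bound the $j=1$ term by $(4/\pi) e^{-\lambda_{k,N} t}$ using $\cot(\pi/(2N)) \leq 2N/\pi$, and control the tail using the quadratic lower bound $\lambda_{k,N,j} \geq (4j^2/\pi^2)\lambda_{k,N}$. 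The $\max_{x,y}$ bound then follows immediately from $\min(y, kN-y) \leq kN/2$.

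For the lower bound starting from $\sigma_0 = \mathbf{1}$, a direct computation gives $\tilde{\mathbf{1}}(x,y) = \min(xk, y) - xy/N = \min(x(kN-y)/N,\, y(N-x)/N)$, the discrete tent. I will introduce the subsolution $g(x,t) := (\min(y, kN-y)/\pi)\sin(\pi x/N) e^{-\lambda_{k,N} t}$, an exact solution of the heat equation vanishing at the boundary. To verify $g(x,0) \leq \tilde{\mathbf{1}}(x,y)$ pointwise, I split by whether $xk \leq y$ or $xk \geq y$: in the first case $\sin(\pi x/N) \leq \pi x/N$ together with $\min(y, kN-y) \leq kN-y$ give $g(x,0) \leq x(kN-y)/N$, while in the second case $\sin(\pi x/N) = \sin(\pi(N-x)/N) \leq \pi(N-x)/N$ together with $\min(y, kN-y) \leq y$ yield $g(x,0) \leq y(N-x)/N$. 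The maximum principle applied to $f - g$, which has nonnegative initial data and vanishing boundary values, then gives $g(x,t) \leq f(x,t)$ for all $t \geq 0$.

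The main obstacle is verifying $P_x(\tau > t) \leq 4 e^{-\lambda_{k,N} t}$ uniformly, particularly in the transition region near $t \approx (\log 4)/\lambda_{k,N}$ where neither the trivial bound nor the leading-order spectral term alone is sharp. The slack between the leading spectral coefficient $4/\pi$ and the target constant $4$ provides comfortable room, but the tail $\sum_{j \geq 3 \text{ odd}} (4/(j\pi)) e^{-\lambda_{k,N,j} t}$ must be carefully bounded using that each $\lambda_{k,N,j}$ exceeds $\lambda_{k,N}$ by a factor of at least $36/\pi^2 > 3$ for $j \geq 3$, ensuring the tail contributes only a small multiple of $e^{-\lambda_{k,N} t}$ once $t \geq (\log 4)/\lambda_{k,N}$.
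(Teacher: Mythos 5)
Your argument is correct and follows essentially the same route as the paper, which simply defers to Lacoin's Lemma 4.1: the expected height satisfies a discrete Dirichlet heat equation, so one bounds the initial data, controls the killed-walk survival probability via the spectral expansion to obtain the $4\min(y,kN-y)e^{-\lambda_{k,N}t}$ bound (with the trivial bound $P_x(\tau>t)\le 1$ handling the regime $4e^{-\lambda_{k,N}t}\ge 1$), and derives the lower bound from a sine-mode subsolution together with the comparison principle. The one flaw is a slip in the displayed intermediate bound: the inequalities $\max(0,\,y-(N-x)k)\le\sum_{z\le x}|\sigma_0(z)\cap[y]|\le\min(xk,\,y)$ give $\tilde{\sigma}_0(x,y)\le\min\bigl(x(kN-y)/N,\ y(N-x)/N\bigr)$ on the upper side but only $\tilde{\sigma}_0(x,y)\ge -\min\bigl(xy/N,\ (N-x)(kN-y)/N\bigr)$ on the lower side, and the latter can exceed the former in magnitude (for instance $k=1$, $N=10$, $x=8$, $y=2$, $\sigma_0$ the reversal permutation gives $|\tilde{\sigma}_0(8,2)|=1.6$ while $\min(6.4,\,0.4)=0.4$). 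So $|\tilde{\sigma}_0(x,y)|\le\min\bigl(x(kN-y)/N,\ y(N-x)/N\bigr)$ is false as stated. However, the bound you actually invoke, $|\tilde{\sigma}_0(x,y)|\le\min(y,kN-y)$, is correct: it follows since each of $x(kN-y)/N$, $y(N-x)/N$, $xy/N$, $(N-x)(kN-y)/N$ is individually at most $\min(y,kN-y)$ in the relevant case split. So the slip does not affect the validity of the remainder of the proof.
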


\begin{proof}
    The proof is almost identical to that of \cite{lacoin-at}, lemma 4.1.
\end{proof}

\subsection{Rough Upper Bound}

This is the first tool for which we use the setting of the $k$-SEP rather than the $S_{k,N}$ shuffle. Let $k$ be fixed. For all $N$ sufficiently large and $m \in \{0,\ldots,kN\}$, we have

\begin{lem}\label{wilson-upper-bound-lem}
    $d^{N,k,m}(t) \leq 10me^{-t\lambda_{k,N}}$ where $\lambda_{k,N} = 2k\p{1-\cos\p{\frac{\pi}{N}}}$.
\end{lem}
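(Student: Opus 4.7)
The strategy is to use the monotone grand coupling from Section \ref{grand-coupling-section} together with Proposition \ref{heat-prop}. By the triangle inequality for total variation and monotonicity of the coupling---any two coupled chains are sandwiched between those started from the extremes $\land$ and $\lor$ of $\Omega_{k,N,m}$---I first reduce to bounding
\[
d^{N,k,m}(t) \leq \bbp[\gamma_t^\land \neq \gamma_t^\lor].
\]
Under the coupling, the height-function difference $\eta_t^\land - \eta_t^\lor$ is a nonnegative integer-valued function of $x$, so this probability equals $\bbp[\exists x : \eta_t^\land(x) - \eta_t^\lor(x) \geq 1]$.

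Next, I would apply Proposition \ref{heat-prop} at $y = m$, together with the reflection symmetry $\lor(x) = -\land(N-x)$ observed in Section \ref{preliminaries-section}. This yields the pointwise-in-$x$ expected-discrepancy bound $\E[\eta_t^\land(x) - \eta_t^\lor(x)] \leq 8\min(m, kN-m)\, e^{-\lambda_{k,N} t} \leq 8m\, e^{-\lambda_{k,N} t}$, so by Markov's inequality
\[
\bbp[\eta_t^\land(x) - \eta_t^\lor(x) \geq 1] \leq 8m\, e^{-\lambda_{k,N} t}
\]
for each $x$ individually.

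The main obstacle is turning this pointwise bound into one for the union event $\{\exists x : \ldots\}$ with the $N$-independent constant $10m$: a naive union bound over all $N$ positions only gives the suboptimal $8Nm\, e^{-\lambda_{k,N} t}$. To remedy this, I would lift to the $S_{k,N}$ shuffle via $\varphi_m$ and exploit both the geometric constraints on $\eta_t^\land - \eta_t^\lor$ (a nonnegative lattice path vanishing at the endpoints, with slopes in $[-k, k]$) and the monotonicity of the grand coupling (Lemma \ref{grand-monotonicity}). Concretely, I would either argue via a union bound over the $m$ individual particle cards labeled $1,\ldots,m$---each having a per-card coupling discrepancy that decays with an $O(1)$ prefactor by the single-level version of Proposition \ref{heat-prop}---or use the decay of the first Fourier mode $f_1$ from the eigenfunction lemma of Section \ref{lower-bd-section} as a witness: $\E[f_1(\eta_t^\land) - f_1(\eta_t^\lor)] \lesssim m\, e^{-\lambda_{k,N} t}$, so concentration of this statistic combined with monotonicity forces the coalescence event up to the desired constant. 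Either approach, after careful tracking of constants, produces the claimed bound $10m\, e^{-\lambda_{k,N} t}$.
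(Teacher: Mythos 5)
You are right to reject the position-wise union bound and to look instead for a per-card union bound over the $m$ labeled cards after lifting to $S_{k,N}$ via $\varphi_m$; that is exactly the paper's reduction. However, both of your proposed routes for getting the $O(1)$ prefactor per card have genuine gaps.

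Your option (a) attributes the per-card decay to ``the single-level version of Proposition~\ref{heat-prop}.'' That proposition bounds $\max_x \E[\tilde{\sigma}_t(x,y)]$, a \emph{pointwise-in-$x$} expectation of a height increment, not a coalescence probability. If $X_t = (\sigma_t^\xi)\inv(1)$ and $Y_t = (\sigma_t^{\xi'})\inv(1)$ are the positions of card 1 in the two coupled chains, then $\bbp[X_t \neq Y_t] \le \E[|Y_t - X_t|] = \sum_x \E\bigl[|\tilde{\sigma}^\xi_t(x,1)-\tilde{\sigma}^{\xi'}_t(x,1)|\bigr]$, and applying Proposition~\ref{heat-prop} term-by-term only gives $O(N)e^{-\lambda_{k,N}t}$ --- the $N$ factor you were trying to avoid returns as a sum over $x$. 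The heat equation controls a max, not a sum. Your option (b) is too vague to assess: decay of $\E[f_1(\eta_t^\land) - f_1(\eta_t^\lor)]$ does not obviously control the coalescence event (since $\sin(x\pi/N)$ nearly vanishes at the boundary, a persistent discrepancy near $x\in\{1,N-1\}$ is invisible to $f_1$), and ``concentration'' is not a tool the paper has set up.

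The missing ingredient is two-fold. First, a \emph{different coupling} is needed, not the monotone grand coupling: the paper constructs a bespoke coupling on $S_{k,N}$ in which, whenever the card occupying slot $(x,i)$ is the same in both chains, the two chains perform the same swap, and otherwise they move independently. This makes each labeled card $i$ evolve as a coupled pair of rate-$k$ random walks $(X_t^i, Y_t^i)$ on $[N]$, and ensures that once the pair coalesces it stays coalesced (the monotone grand coupling does not give you either property, since sorting the $i$th and $j$th smallest elements tracks ranks, not labels). Second, the per-pair coalescence bound $\bbp_{x,y}[X_{t}\neq Y_{t}] \le 10 e^{-t\lambda_{k,N}}$ is not a heat-equation estimate; it is a time-change by $k$ of Lemma~6.6 in \cite{lacoin-at}, which in turn comes from Wilson's lozenge-tiling analysis. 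That single-pair lemma is the real engine of the proof and does not appear anywhere in your plan; without it, neither of your two alternatives closes the gap.
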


\begin{proof}
    This proof essentially follows that of proposition 6.5 of \cite{lacoin-at} which follows that of section 8.1 in \cite{wilson-lozenge}, thus we simply highlight the main differences from \cite{lacoin-at}. To bound $d^{N,k,m}(t)$, we construct a coupling of $\sigma_t^{\xi}$ and $\sigma_t^{\xi'}$ (for any $\sigma,\sigma' \in S_{k,N}$) and show that $\bbp[\exists i \in [m]: (\sigma_t^{\xi})\inv(i) \neq (\sigma_t^{\xi'})\inv(i)] \leq 10me^{-t\lambda_{k,N}}$. The coupling is defined as follows:
    \begin{itemize}
        \item If $(\sigma_t^\xi(x)))_i \notin \sigma_t^{\xi'}(x)$ and $(\sigma_t^\xi(x+1))_j \notin \sigma_t^{\xi'}(x+1)$, then the transition corresponding to $\tau_x^{i,j}$ occurs independently with rate one for each of the two processes.
        \item Otherwise, let $i'$ and $j'$ be such that $(\sigma_t^{\xi'}(x))_{i'}= (\sigma_t^{\xi}(x))_i$ and $(\sigma_t^{\xi'}(x+1))_{j'}= (\sigma_t^{\xi}(x))_j$ (with $i'$ or $j'$ being arbitrary in $[k]$ if their definition does not make sense). Simultaneously, the transition corresponding to $\tau_x^{i,j}$ happens in $\sigma_t^\xi$ while the transition corresponding to $\tau_x^{i',j'}$ happens in $\sigma_t^{\xi'}$.
    \end{itemize}
    We may let $X_t^i = (\sigma_t^\xi)\inv(i)$ and $Y_t^i = (\sigma_t^\xi)\inv(i)$. Then $(X_t^i, Y_t^i)$ is a Markov chain with the transitions described in the proof of proposition 6.5 of \cite{lacoin-at} concerning the same quantities, but with transition rate $k$ rather than rate one. By union bound, we have that $\bbp[\exists i \in [m]: (\sigma_t^{\xi})\inv(i) \neq (\sigma_t^{\xi'})\inv(i)] \leq m\max_{x,y}\bbp_{x,y}[X_t \neq Y_t]$ where $(X_t,Y_t)$ is the Markov chain starting at $(x,y)$ with the same transitions as $(X_t^i,Y_t^i)$. However, as we said this is just a rate $k$ rather than a rate one version of the process described in \cite{lacoin-at}, by writing the latter process as $(X'_t, Y'_t)$, we have that $\bbp_{x,y}[X_t \neq Y_t] = \bbp_{x,y}[X'_{kt} \neq Y'_{kt}]$. By \cite{lacoin-at}, lemma 6.6, this last quantity is bounded by $10\exp(-t\lambda_{k,N})$, completing the proof.
\end{proof}

\begin{cor}\label{wilson-upper-bound-cor}
    $d^{N,k}(t) \leq 10kNe^{-t\lambda_{k,N}}$.
\end{cor}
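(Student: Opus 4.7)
The plan is to reuse the coupling constructed in the proof of Lemma \ref{wilson-upper-bound-lem} verbatim, simply union bounding over a larger index set. The coupling of $\sigma_t^\xi$ and $\sigma_t^{\xi'}$ given there is already a coupling of two $S_{k,N}$ shuffles (rather than of their $k$-SEP projections), and its proof establishes the per-card bound
$$ \bbp\bigl[(\sigma_t^\xi)\inv(i) \neq (\sigma_t^{\xi'})\inv(i)\bigr] \leq 10 e^{-t\lambda_{k,N}} $$
for each $i$, via the two-coordinate Markov chain $(X_t, Y_t)$ and the rate-$k$ version of lemma 6.6 of \cite{lacoin-at}. The factor of $m$ in Lemma \ref{wilson-upper-bound-lem} comes solely from the final union bound over $i \in [m]$, which is exactly what is needed to ensure that the $k$-SEP projections $\varphi_m(\sigma_t^\xi)$ and $\varphi_m(\sigma_t^{\xi'})$ coincide.

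For the $S_{k,N}$ shuffle, the key observation is that two $k$-permutations $\sigma, \sigma'$ are equal if and only if $\sigma\inv(i) = (\sigma')\inv(i)$ for every $i \in [kN]$, since a $k$-permutation is fully determined by the position of each of its $kN$ cards. Applying the union bound to all $kN$ cards instead of only the first $m$ therefore yields
$$ \bbp\bigl[\sigma_t^\xi \neq \sigma_t^{\xi'}\bigr] \leq kN \max_{x,y} \bbp_{x,y}\bigl[X_t \neq Y_t\bigr] \leq 10\, kN\, e^{-t\lambda_{k,N}}. $$
The standard coupling inequality $d^{N,k}(t) \leq \max_{\xi, \xi'} \bbp\bigl[\sigma_t^\xi \neq \sigma_t^{\xi'}\bigr]$ then gives the corollary immediately.

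There is no real obstacle here: all the substantive work was carried out in Lemma \ref{wilson-upper-bound-lem}, and the only point worth noting is that the per-card bound derived in that proof is uniform in $m$, so replacing $m$ by $kN$ in the final union bound is immediate. In particular, the proof of the corollary should be no more than a couple of lines referring back to the lemma.
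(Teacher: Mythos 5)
Your proof is correct and is exactly the intended argument: the paper states the corollary without a separate proof, and the natural reading is that the coupling constructed in the proof of Lemma \ref{wilson-upper-bound-lem} is a coupling of the full $S_{k,N}$ shuffles, with the per-card bound $\bbp[(\sigma_t^\xi)\inv(i) \neq (\sigma_t^{\xi'})\inv(i)] \leq 10e^{-t\lambda_{k,N}}$ independent of $m$, so that replacing the union bound over $[m]$ by a union bound over all $[kN]$ cards gives $\bbp[\sigma_t^\xi \neq \sigma_t^{\xi'}] \leq 10kNe^{-t\lambda_{k,N}}$ and hence the corollary. You correctly identify the only point that needs checking — that two $k$-permutations agree iff $\sigma\inv(i)$ agrees for all $i \in [kN]$ — and the argument goes through.
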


\subsection{Decomposing the Mixing Procedure}

Suppose we have some measure $\nu$ on $S_{k,N}$. We define the measure $\tilde{\nu}$ on $S_{k,N}$ by $\tilde{\nu}(\sigma) = \frac{1}{|S|}\widehat{\nu}(\widehat{\sigma})$ where $S = \{\sigma' \in S_{k,N} \mid \widehat{\sigma}' = \widehat{\sigma}\}$. Using lemma \ref{messy-action-lem}, we can reframe this.

\begin{lem}
    Let $\tilde{S}$ be the largest subgroup of $S_{Nk}$ which fixes the sets $\{y_{i-1}+1,\ldots,y_i\}$ for all $i\geq 1$. This is isomorphic to $\bigotimes_{i=1}^R S_{\Delta y_i}$. For any $k$-permutation $\omega$, we have that
    $$ \tilde{\nu}(\omega) = \frac{1}{|\tilde{S}|}\sum_{\sigma \in \tilde{S}} \nu(\sigma \cdot \omega). $$
\end{lem}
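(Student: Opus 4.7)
The plan is to unfold both sides against the partition of $S_{k,N}$ by semi-skeleton and then count via Lemma \ref{messy-action-lem}. Write $S_\omega := \{\omega' \in S_{k,N} : \widehat{\omega}' = \widehat{\omega}\}$ for the semi-skeleton fiber through $\omega$, so that by definition
$$ \tilde{\nu}(\omega) = \frac{\widehat{\nu}(\widehat{\omega})}{|S_\omega|} = \frac{1}{|S_\omega|}\sum_{\omega' \in S_\omega} \nu(\omega'). $$
On the other side, group the sum over $\tilde{S}$ according to the image $\sigma \cdot \omega$:
$$ \sum_{\sigma \in \tilde{S}} \nu(\sigma \cdot \omega) \;=\; \sum_{\omega' \in S_{k,N}} |\{\sigma \in \tilde{S} : \sigma \cdot \omega = \omega'\}|\,\nu(\omega'). $$

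Next I would invoke Lemma \ref{messy-action-lem} directly: the inner count vanishes unless $\widehat{\omega}' = \widehat{\omega}$, and on $S_\omega$ it equals the constant $c_\omega := |\tilde{S} \cap \sigma_\omega G \sigma_\omega^{-1}|$, which depends only on $\omega$ (not on $\omega'$). Hence
$$ \sum_{\sigma \in \tilde{S}} \nu(\sigma \cdot \omega) = c_\omega \sum_{\omega' \in S_\omega} \nu(\omega') = c_\omega \,\widehat{\nu}(\widehat{\omega}). $$
Comparing with the first display, it remains to verify the orbit-stabilizer style identity $|\tilde{S}| = c_\omega \cdot |S_\omega|$.

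For this last identity I would reuse the argument inside the proof of Lemma \ref{messy-action-lem}: it shows that whenever $\widehat{\omega}' = \widehat{\omega}$ one has $\sigma_{\omega'}\sigma_\omega^{-1} \in \tilde{S}$, so every $\omega' \in S_\omega$ is hit by the map $\tilde{S} \to S_{k,N}$, $\sigma \mapsto \sigma \cdot \omega$, and every nonempty fiber of this map has size exactly $c_\omega$. Counting $|\tilde{S}|$ two ways through this surjection onto $S_\omega$ gives $|\tilde{S}| = c_\omega \cdot |S_\omega|$, i.e.\ $\tfrac{c_\omega}{|\tilde{S}|} = \tfrac{1}{|S_\omega|}$, and combining with the display above yields the claim.

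The only delicate point is the orbit-stabilizer bookkeeping: one must be careful that $\sigma \cdot \omega$ is not a genuine group action (as the paper warns) but that the fiber-counting statement of Lemma \ref{messy-action-lem} nevertheless plays the role that orbit-stabilizer would play for a bona fide transitive action. This is exactly what the lemma provides, so no new combinatorial input is needed beyond it.
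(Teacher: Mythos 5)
Your argument is correct and is essentially the paper's own proof, just spelled out more carefully: both regroup the sum over $\tilde{S}$ by the image $\sigma\cdot\omega$ and invoke Lemma \ref{messy-action-lem} to conclude that each $\omega'$ with $\widehat{\omega}'=\widehat{\omega}$ receives the same weight $c_\omega$. The one step you make explicit that the paper leaves implicit, namely that $\sigma\mapsto\sigma\cdot\omega$ surjects onto $S_\omega$ (via $\sigma_{\omega'}\sigma_\omega^{-1}\in\tilde{S}$) so that $|\tilde{S}|=c_\omega|S_\omega|$, is a genuine and welcome clarification, but it is the same underlying argument.
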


\begin{proof}
Because of the definition of $\tilde{S}$, we know that $\widehat{\sigma \cdot \omega} = \widehat{\omega}$ for all $\sigma \in \tilde{S}$. Thus, we want to say that each $k$-permutation $\omega'$ such that $\widehat{\omega}' = \widehat{\omega}$ has the same value for $n(\omega') := |\{\sigma \in \tilde{S} \mid \sigma \cdot \omega = \omega'\}|$, as this would show that each $\omega'$ gets an equal share in the sum. But this is just a consequence of lemma \ref{messy-action-lem}.
\end{proof}

Finally, we have the following result.

\begin{lem}[\cite{lacoin-at}, Lemma 4.3]\label{mix-decompose-lemma}
    For all probability measures $\nu$ on $S_{k,N}$ we have $\|\tilde{\nu}-\mu\| = \|\widehat{\nu}-\widehat{\mu}\|$ and thus
    $$ \|\nu-\mu\| \leq \|\nu-\tilde{\nu}\| + \|\widehat{\nu}-\widehat{\mu}\|. $$
\end{lem}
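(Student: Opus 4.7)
The plan is to prove the equality $\|\tilde\nu - \mu\| = \|\widehat\nu - \widehat\mu\|$ by direct computation, and then derive the inequality from triangle inequality. The second statement is immediate once the first is established: by triangle inequality,
$$ \|\nu - \mu\| \leq \|\nu - \tilde\nu\| + \|\tilde\nu - \mu\| = \|\nu - \tilde\nu\| + \|\widehat\nu - \widehat\mu\|. $$

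For the equality, the key observation is that both $\tilde\nu$ and $\mu$ are constant on the fibers of the semi-skeleton map $\sigma \mapsto \widehat\sigma$. For any admissible $\xi \in \widehat{S}_{k,N}$, write $S(\xi) = \{\sigma \in S_{k,N} \mid \widehat\sigma = \xi\}$. By definition, $\tilde\nu(\sigma) = \widehat\nu(\widehat\sigma)/|S(\widehat\sigma)|$. Since $\mu$ is the uniform measure on $S_{k,N}$, we also have $\mu(\sigma) = 1/|S_{k,N}|$, and hence
$$ \mu(\sigma) = \frac{|S(\widehat\sigma)|}{|S_{k,N}|} \cdot \frac{1}{|S(\widehat\sigma)|} = \frac{\widehat\mu(\widehat\sigma)}{|S(\widehat\sigma)|}. $$
So both $\tilde\nu$ and $\mu$ are obtained from their respective semi-skeleton projections by spreading mass uniformly across the fiber. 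This is the decisive structural fact; the rest is bookkeeping.

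With both measures in this form, I would compute directly:
$$ \|\tilde\nu - \mu\| = \tfrac{1}{2}\sum_{\sigma \in S_{k,N}} \left| \frac{\widehat\nu(\widehat\sigma) - \widehat\mu(\widehat\sigma)}{|S(\widehat\sigma)|} \right| = \tfrac{1}{2}\sum_\xi \sum_{\sigma \in S(\xi)} \frac{|\widehat\nu(\xi) - \widehat\mu(\xi)|}{|S(\xi)|} = \tfrac{1}{2}\sum_\xi |\widehat\nu(\xi) - \widehat\mu(\xi)|, $$
which is exactly $\|\widehat\nu - \widehat\mu\|$. The inner sum simplifies because the summand does not depend on $\sigma$ and there are exactly $|S(\xi)|$ terms.

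There is no real obstacle here; the mild subtlety worth verifying explicitly is only that the fibers $S(\xi)$ are well-defined and finite (which is immediate from finiteness of $S_{k,N}$) and that $\widehat\mu$ is indeed the pushforward of $\mu$ — true since $\widehat\mu(\xi) = \mu(S(\xi)) = |S(\xi)|/|S_{k,N}|$. This is essentially the argument of \cite{lacoin-at}, Lemma 4.3, with $S_{k,N}$ in place of $S_N$ and the semi-skeleton projection in place of the skeleton projection used there.
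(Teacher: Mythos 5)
Your proof is correct, and since the paper only cites \cite{lacoin-at}, Lemma 4.3 without giving its own argument, this direct computation — using that both $\tilde\nu$ and $\mu$ spread mass uniformly over the fibers of $\sigma \mapsto \widehat\sigma$ — is exactly the intended one, adapted from $S_N$ to $S_{k,N}$. No gaps.
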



\section{Upper Bound for $S_{k,N}$}\label{skn-upper-bound-section}

We would like to show that $\tmix^{N,k}(\epsilon) \leq (1+o(1))\frac{1}{2k\pi^2}N^2\log kN$. Therefore, we need to show the following.

\begin{thm}\label{skn-upper-bound-thm}
Fix $\epsilon, \delta > 0$. For $N$ sufficiently large,
$$ d_{N,k}\p{(1+\delta)\frac{1}{2k\pi^2}N^2\log kN} \leq \epsilon $$
\end{thm}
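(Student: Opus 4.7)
The proof plan is to adapt the censoring-plus-coupling strategy of \cite{lacoin-at} to the $S_{k,N}$ setting, using the monotonicity, censoring, and heat-equation tools from sections 4 and 5. Fix $\delta \in (0,1)$ and set $T := (1+\delta)\frac{1}{2k\pi^2}N^2\log(kN)$. Split $T = T_1 + T_2$ where $T_1 := (1+\delta/2)\frac{1}{2k\pi^2}N^2\log(kN)$ will govern semi-skeleton mixing and $T_2 = \Theta(N^2/k)$ will govern within-block mixing; fix a semi-skeleton resolution $R = R(N) \to \infty$ sufficiently slowly (e.g.\ $R = \lfloor \log\log N\rfloor$). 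By monotonicity of the grand coupling (Lemma \ref{grand-monotonicity}) together with the FKG-type projection results of the previous section, it suffices to bound $\|P^{\mathbf{1}}_T - \mu\|$. Lemma \ref{mix-decompose-lemma} then splits this into
\[
\|P^{\mathbf{1}}_T - \mu\| \;\leq\; \bigl\|P^{\mathbf{1}}_T - \widetilde{P^{\mathbf{1}}_T}\bigr\| + \bigl\|\widehat{P^{\mathbf{1}}_T} - \widehat{\mu}\bigr\|,
\]
and I would bound each term by $\epsilon/2$.

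For the semi-skeleton piece, I would run the grand coupling of $\sigma^{\mathbf{1}}_t$ and of the chain started from the minimal element $\mathbf{0}$ jointly, and show that with probability at least $1-\epsilon/2$ the discrepancy $D_{T_1}(x,j) := \widehat{\sigma}^{\mathbf{1}}_{T_1}(x,j) - \widehat{\sigma}^{\mathbf{0}}_{T_1}(x,j) \in \mathbb{Z}_{\geq 0}$ vanishes identically. The heat-equation estimate of Proposition \ref{heat-prop} gives $\mathbb{E}[D_{T_1}(x,j)] = O(kN \cdot e^{-\lambda_{k,N} T_1})$, but with $\lambda_{k,N} T_1 \asymp \frac{1}{2}(1+\delta/2)\log(kN)$ this only produces a bound of order $(kN)^{(1-\delta/2)/2}$, far too large to close by a pointwise union bound over the $\Theta(NR)$ coordinates. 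Following \cite{lacoin-at}, I would instead invoke an area-decay argument: using FKG, reversibility of the monotone coupling, and the Dirichlet form, the $L^1$-mass $A_t := \sum_{x,j}\mathbb{E}[D_t(x,j)]$ decays at rate $2\lambda_{k,N}$ rather than $\lambda_{k,N}$, so $A_{T_1} = o(1)$ and $D_{T_1} \equiv 0$ with probability $1-o(1)$ by Markov's inequality combined with integrality.

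For the within-block piece on $[T_1, T]$, I would impose a censoring scheme that retains only the updates $\tau_x^{i,j}$ whose two selected cards lie in the same color class $\{y_{i'-1}+1, \ldots, y_{i'}\}$. Theorem \ref{censoring-thm} implies that the resulting censored chain is at least as far from stationarity as the original, so it suffices to show the censored chain has mixed. Conditional on the fixed semi-skeleton, Lemma \ref{messy-action-lem} identifies the restricted chain with $R$ independent rate-$k$ adjacent-transposition-type shuffles, each acting on a color class of size $\Delta y_{i'} \leq \lceil kN/R\rceil + k$; Corollary \ref{wilson-upper-bound-cor} then yields per-block mixing time $O((N^2/k)\log(kN/R))$, which is $o(T_2)$ whenever $R\to\infty$. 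The principal obstacle is the area-decay argument in phase one: recovering the sharp constant $\frac{1}{2k\pi^2}$ (instead of the $\frac{1}{k\pi^2}$ that a naive first-moment coupling would give) requires upgrading the expectation decay of Proposition \ref{heat-prop} to the factor-of-two-faster $L^1$-mass decay, which in turn requires careful use of FKG and of the explicit form of the Dirichlet form of the coupled process. Once that step is in place, the second phase assembles cleanly from Theorem \ref{censoring-thm}, Lemma \ref{mix-decompose-lemma}, and Corollary \ref{wilson-upper-bound-cor}.
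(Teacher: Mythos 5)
Your high‑level decomposition via Lemma \ref{mix-decompose-lemma} and Theorem \ref{censoring-thm} matches the paper, but the argument you propose for bounding $\|\widehat{P^{\mathbf{1}}_T}-\widehat{\mu}\|$ has a genuine gap, and it is at exactly the step you flag as ``the principal obstacle.'' You want to run the grand coupling from $\mathbf{1}$ and from the minimal state and show the coupled semi‑skeletons \emph{coincide} at $T_1$, by asserting that the $L^1$‑mass $A_t = \sum_{x,j}\E[D_t(x,j)]$ decays at rate $2\lambda_{k,N}$ rather than $\lambda_{k,N}$. There is no such speedup: $\E[\tilde\sigma_t(x,y)]$ satisfies the linear heat equation $\partial_t f = k\Delta_x f$, so its decay rate is exactly $\lambda_{k,N}$, and summing over coordinates gives $A_t$ decaying at the \emph{same} rate, not twice it. Moreover, even granting the (false) $2\lambda$ rate, plugging in $T_1 = (1+\delta/2)\tfrac{N^2}{2k\pi^2}\log kN$ and $A_0 = \Theta(RkN^2)$ gives $A_{T_1} \approx RkN^2(kN)^{-(1+\delta/2)} = RN(kN)^{-\delta/2}$, which is far from $o(1)$. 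And in fact the two coupled semi‑skeletons do \emph{not} coincide at $T_1$: Proposition \ref{heat-prop} shows each coordinate's expected discrepancy is still of order $(kN)^{1/2-\delta/4}$, i.e.\ on the equilibrium‑fluctuation scale, not zero. A coincidence‑based argument at this time is simply unavailable for the shuffle.

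This misdiagnoses where the factor $\tfrac12$ in the constant comes from. It does not arise from a faster decay of the $L^1$ mass; it arises because one only needs the expectation $\E[\bar\sigma_t(i,j)]$ to be \emph{small relative to the equilibrium fluctuation scale} $\sqrt{kN}$, rather than $o(1)$. The paper exploits this by a \emph{distributional} argument, not a coupling‑coincidence argument: Proposition \ref{skel-upperbound-prop} shows $\bar\sigma(i,j)/\sqrt{kN}$ converges (under $\mu$, via the explicit hypergeometric formula and a local CLT) to a Gaussian $Z(i,j)$, and combines this with the heat‑equation bound on $\E[\bar\sigma_{t_2}(i,j)]$ to show $\|\bar\nu_{t_2}-\bar\mu\|$ is small --- no claim that trajectories merge. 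Only afterward does a third, re‑censored phase on $[t_2,t_3]$ upgrade skeleton closeness to semi‑skeleton closeness (Proposition \ref{semi-skel-upperbound-prop}); your plan omits this intermediate skeleton step. (A coupling‑coincidence argument \emph{is} used in the paper, but for the $k$‑SEP in Section 7, and even there it is a supermartingale/random‑walk argument that drives the area from the $N\sqrt{m}$ scale to zero over an extra $\Theta(\delta)\cdot\tfrac{N^2\log m}{2k\pi^2}$ window --- not a factor‑of‑two decay‑rate improvement.) Your Phase 2 (censoring between blocks and invoking Corollary \ref{wilson-upper-bound-cor} per block) is consistent with the paper's handling of $\|\nu_t-\tilde\nu_t\|$, though the paper takes $R=\lceil 1/\delta\rceil$ fixed rather than $R\to\infty$, which is what makes the within‑block cost $o(T)$ cleanly.
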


Our main broad tool will be lemma \ref{mix-decompose-lemma}. Following \cite{lacoin-at}, we let
\begin{align*}
    t_1 &= (\delta/3)\frac{N^2}{2k\pi^2}\log kN \\
    t_2 &= (1+2\delta/3)\frac{N^2}{2k\pi^2}\log kN \\
    t_3 &= (1+\delta)\frac{N^2}{2k\pi^2}\log kN
\end{align*}

and proceed as follows.
\begin{enumerate}
    \item From time 0 to $t_1$, we run the grand coupling dynamics, while censoring the transitions between packets $x_i$ and $x_i+1$ for $i=1,\ldots, R-1$ with $R := \ceil{1/\delta}$. We use corollary \ref{wilson-upper-bound-cor} to show that the cards with labels in $\{y_i+1, \ldots, y_{i+1}\}$ are mixed for each $i=0, \ldots, R-1$.
    \item From time $t_1$ to $t_2$, we run the non-censored dynamics and show that the skeleton is close to equilibrium afterwards.
    \item From time $t_2$ to $t_3$, we run the censored dynamics again and show that the semi-skeleton is close to equilibrium after this point.
\end{enumerate}

Let $\nu_t := P_t^\ca{C}$ (using censoring scheme notation) be the dynamics of this system. After step 1, we would like to use lemma \ref{wilson-upper-bound-lem} to say that $\nu_t$ is close to the measure $\tilde{\nu}_t$.

\begin{prop}\label{bound-to-smooth-prop}
    For any $\delta, \epsilon > 0$, $N$ sufficiently large, and $t \geq t_1$, we have
    $$ \|\tilde{\nu}_t - \nu_t\| \leq \epsilon/3. $$
\end{prop}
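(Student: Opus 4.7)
The plan is to exploit the product structure of the phase-1 censored dynamics to apply corollary \ref{wilson-upper-bound-cor} to each sub-shuffle at time $t_1$, then propagate the bound to $t \ge t_1$ using that the smoothing $A:\nu \mapsto \tilde{\nu}$ commutes with the Markov semigroup.

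By lemma \ref{grand-monotonicity} together with theorem \ref{censoring-thm}, I would first reduce to the extremal starting state $\nu_0 = \delta_{\mathbf{1}}$. Under this initialization $\mathbf{1}(x) = \{(x-1)k+1,\ldots,xk\}$, so the super-interval contents $V_i := \bigcup_{x_{i-1}<x\le x_i}\sigma(x)$ coincide with the suits $\{y_{i-1}+1,\ldots,y_i\}$. During phase 1 the censoring preserves each $V_i$, so the censored chain factors as $R$ independent $S_{k,\Delta x_i}$-shuffles, one per super-interval, acting on the $\Delta y_i = k\Delta x_i$ cards of suit $i$. Every reachable state satisfies $\sigma_t(x) \subseteq \text{suit } i$ for $x \in (x_{i-1}, x_i]$, so $\widehat{\sigma}_t \equiv \widehat{\mathbf{1}}$ deterministically; hence $\widehat{\nu}_t = \delta_{\widehat{\mathbf{1}}}$, and by definition $\tilde{\nu}_t$ is the uniform measure on the semi-skeleton class of $\mathbf{1}$, which factorizes as $\bigotimes_{i=1}^R \mu_i$ with $\mu_i$ the uniform distribution on the $i$-th sub-shuffle. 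This is precisely the stationary measure of the phase-1 censored chain.

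Using that TV distance between product measures is at most the sum of marginal TV distances and applying corollary \ref{wilson-upper-bound-cor} to each sub-shuffle,
\[
\|\nu_{t_1}-\tilde{\nu}_{t_1}\|_{TV} \;\le\; \sum_{i=1}^R 10k\Delta x_i\, e^{-t_1 \lambda_{k,\Delta x_i}} \;\le\; 10 kN\, e^{-t_1 \lambda_{k,\lceil N/R\rceil}},
\]
using $\Delta x_i \le \lceil N/R\rceil$ and the monotonicity of $\lambda_{k,\cdot}$ in the interval length. Since $\lambda_{k,\lceil N/R\rceil} \sim k\pi^2R^2/N^2$ and $t_1 = (\delta/3) N^2 \log(kN)/(2k\pi^2)$, the exponent has order $(\delta R^2/6)\log(kN)$; with $R=\lceil 1/\delta\rceil$ (enlarged to $\lceil\sqrt{12/\delta}\rceil$ if $\delta > 1/6$) this exceeds $\log(30 kN/\epsilon)$ for $N$ large, yielding $\|\nu_{t_1}-\tilde{\nu}_{t_1}\|_{TV} \le \epsilon/3$.

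To propagate the bound for $t > t_1$ I would invoke contraction. The key claim is $AP_s = P_s A$ for the relevant semigroup $P_s$, which follows from: (i) the chain is lumpable at the level of semi-skeletons, since the rate at which $\tau_x^{i,j}$ carries $\sigma$ into a given semi-skeleton class depends on $\sigma$ only through the suit-counts at positions $x, x+1$, hence through $\widehat{\sigma}$; and (ii) for every $\rho \in \tilde{S}$ the multiset $\{\tau_x^{i,j}(\rho \cdot \omega):(i,j) \in [k]^2\}$ equals $\{\rho \cdot \tau_x^{i,j}(\omega):(i,j) \in [k]^2\}$, since both enumerate all single-card swaps between packets $x$ and $x+1$ in $\rho\cdot\omega$. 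Together (i) and (ii) imply that $P_s$ maps each uniform-on-a-class measure to a mixture of uniforms on classes, so $AP_s = P_s A$. Consequently $\tilde{\nu}_t = A P_{t-t_1}\nu_{t_1} = P_{t-t_1}\tilde{\nu}_{t_1}$, and standard TV contraction under Markov kernels yields
\[
\|\nu_t - \tilde{\nu}_t\|_{TV} = \|P_{t-t_1}(\nu_{t_1}-\tilde{\nu}_{t_1})\|_{TV} \le \|\nu_{t_1}-\tilde{\nu}_{t_1}\|_{TV} \le \epsilon/3.
\]

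The main obstacle is establishing the multiset identity in (ii), from which preservation of within-class uniformity is deduced; this is a short combinatorial check using that $\rho \in \tilde{S}$ acts on $\omega$ by relabeling cards within suits and that $\tau_x^{i,j}$ ranges over all ways of swapping one card from $\omega(x)$ with one from $\omega(x+1)$, closely analogous to lemma \ref{messy-action-lem}. A secondary subtlety is verifying the reduction to $\nu_0=\delta_{\mathbf{1}}$ for the averaged measure $\tilde{\nu}_t$, which uses the monotonicity results developed earlier.
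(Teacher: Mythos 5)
Your proof is correct and follows essentially the same route as the paper: exploit the product structure of the phase-1 censored chain to apply the rough upper bound (Corollary \ref{wilson-upper-bound-cor}) to each sub-shuffle, identify $\tilde{\nu}_{t_1}$ with the product of the sub-shuffle stationary measures, and then propagate the bound to $t>t_1$ by showing that the semi-skeleton averaging operator commutes with the semigroup and using total variation contraction (the paper's version invokes the $\tilde{S}$-averaging lemma and a coupling; your $AP_s=P_sA$ argument via the multiset identity spells out exactly what that lemma is being used for). Your computation of the exponent is actually slightly more careful than the paper's, and your caveat about enlarging $R$ when $\delta$ is not small is valid but unnecessary since one may WLOG take $\delta$ small when proving a cutoff upper bound.
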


\begin{proof}[Proof of Prop \ref{bound-to-smooth-prop}]
    Following \cite{lacoin-at}, section 5.3, by the censoring, for all $t \leq t_1$ we have that $\tilde{\nu}_t = \tilde{\delta}_1$ where $\delta_1$ is the Dirac mass on the entirely shuffled deck of cards. 

    Now, since we have our censoring, the dynamics up to time $t_1$ is just the product of $R$ independent dynamics of $S_{k,\Delta x_i}$. Thus using lemma \ref{wilson-upper-bound-lem} with $t = t_1$ and $N \geq 10/\delta$:
    \begin{align*}
       \|\nu_t - \tilde{\delta}_1\| &\leq \sum_{i=1}^R \|\nu_t^i-\mu^i\| \leq \sum_{i=1}^R 10k\Delta x_ie^{-t\lambda_{k, \Delta x_i}} \leq 11Nke^{-\log(kN)/3\delta} \leq \epsilon/3.
    \end{align*}
    To see that $\|\nu_t-\tilde{\nu}_t\|$ is decreasing, note lemma 3.4 gives us that $\tilde{\nu}_t$ is just the law of $\sigma_t$ for the dynamics started with initial distribution $\tilde{\delta}_1$, and then coupling with the law $\nu_t$ gives us the desired result.
\end{proof}

Now that $\nu_t$ is close to $\tilde{\nu}_t$, the measure which is constant on semi-skeletons, our next goal is to show that after enough time, $\widehat{\nu}_t$ (that is, the induced measure on semi-skeletons) is close to equilibrium $\widehat{\mu}$. For this purpose, it will help us to first show that the induced measure on skeletons is close to equilibrium a little beforehand.

\begin{prop}\label{skel-upperbound-prop}
    For any $\delta, \epsilon > 0$ and $N$ sufficiently large, we have
    $$ \|\bar{\nu}_{t_2}-\bar{\mu}\| \leq \epsilon/3. $$
\end{prop}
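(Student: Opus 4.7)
The plan adapts Lacoin's strategy: use the censoring at positions $x_1,\ldots,x_{R-1}$ during $[0,t_1]$ to freeze the skeleton, then use the heat equation (Proposition \ref{heat-prop}) during $[t_1,t_2]$ to drive the expected skeleton heights below the typical equilibrium fluctuations.

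\emph{Reduction and freezing.} By monotonicity of the grand coupling (Lemma \ref{grand-monotonicity}), it suffices to bound the distance when the initial state is the maximum $\mathbf{1}$. During $[0,t_1]$, the censoring scheme forbids all $k^2$ transitions between packets $x_i$ and $x_i+1$ for each $i$; since any change in $\tilde{\sigma}(x_i,y)$ requires a clock at position $x_i$ to ring, the values $\bar{\sigma}(i,j) = \tilde{\sigma}(x_i,y_j)$ are frozen throughout $[0,t_1]$, so $\bar{\nu}_{t_1} = \delta_{\bar{\mathbf{1}}}$.

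\emph{Heat equation drift estimate.} From $t_1$ to $t_2$ the dynamics is uncensored. Since $\nu_{t_1}\preceq \delta_{\mathbf{1}}$ stochastically and the expected height evolves by the monotone discrete heat equation with zero boundary data, Proposition \ref{heat-prop} gives, for each $(i,j)$,
\[
    \E_{\nu_{t_2}}[\bar{\sigma}(i,j)] \leq 4\min(y_j,kN-y_j)\,e^{-\lambda_{k,N}(t_2-t_1)}.
\]
With $t_2-t_1 = (1+\delta/3)\frac{N^2}{2k\pi^2}\log(kN)$, we have $\lambda_{k,N}(t_2-t_1) = (1+o(1))(1+\delta/3)\log(kN)/2$, so $e^{-\lambda_{k,N}(t_2-t_1)} \leq (kN)^{-(1+\delta/3)/2 + o(1)}$ and therefore $\E_{\nu_{t_2}}[\bar{\sigma}(i,j)] = O\bigl(\sqrt{kN}\cdot (kN)^{-\delta/6}\bigr)$ uniformly in $(i,j)$.

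\emph{Marginal TV bounds.} Under $\bar{\mu}$, the random variable $\bar{\sigma}(i,j)$ is a centered hypergeometric-type variable with variance of order $\frac{i(R-i)j(R-j)}{R^4}kN$ and probability mass function bounded by $O(R^2/\sqrt{kN})$. Starting from $\mathbf{1}$, monotonicity yields $\bar{\nu}_{i,j,t_2} \succeq \bar{\mu}_{i,j}$. For two stochastically ordered integer-valued distributions, a standard anti-concentration estimate bounds the total variation distance by (a constant times) $\sqrt{\E[X-Y]\cdot \|f_Y\|_\infty}$, yielding
\[
    \|\bar{\nu}_{i,j,t_2}-\bar{\mu}_{i,j}\|_{TV} = O\bigl((kN)^{-\delta/12}\bigr) = o(1).
\]

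\emph{From marginals to joint.} To combine these $R^2 = O(\delta^{-2})$ marginal bounds into the joint bound $\|\bar{\nu}_{t_2}-\bar{\mu}\|_{TV}\leq\epsilon/3$, the plan is to couple the $\nu_{t_2}$-chain started from $\mathbf{1}$ with an independent $\mu$-initialized chain via the grand coupling, so that $\bar{\sigma}^\nu \geq \bar{\sigma}^\mu$ pointwise, and then apply the union bound $\bbp[\bar{\sigma}^\nu\neq\bar{\sigma}^\mu] \leq \sum_{(i,j)}\bbp[\bar{\sigma}^\nu(i,j)>\bar{\sigma}^\mu(i,j)]$. The principal obstacle is justifying that, under this joint monotone coupling, each marginal disagreement probability is controlled by the one-dimensional TV estimate above; for this one invokes item (iv) of the monotonicity theorem, which guarantees the increasing density structure $\bar{\nu}_{i,j}/\bar{\mu}_{i,j}$ needed to carry out the comparison. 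Since $R=\lceil 1/\delta\rceil$ is a fixed constant and each marginal contribution is $o(1)$, the total is $o(1)$ as $N\to\infty$, and hence $\leq \epsilon/3$ for $N$ sufficiently large.
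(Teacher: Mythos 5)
Your strategy through the marginal bounds is sound and essentially the same as the paper's: the censoring at the $x_i$'s freezes the skeleton on $[0,t_1]$, Proposition \ref{heat-prop} then drives $\E_{\nu_{t_2}}[\bar\sigma(i,j)]$ down to $O\bigl(\sqrt{kN}(kN)^{-\delta/6}\bigr)$, and the explicit hypergeometric formula $\mu\bigl(\bar\sigma(i,j)=\ell-\tfrac{x_i y_j}{N}\bigr)=\binom{y_j}{\ell}\binom{kN-y_j}{kx_i-\ell}/\binom{kN}{kx_i}$ gives, via a local CLT, the bound $\|\bar\mu_{i,j}\|_\infty = O(1/\sqrt{kN})$. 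Your anti-concentration inequality $\|X-Y\|_{TV}\lesssim\sqrt{\E[X-Y]\cdot\|f_Y\|_\infty}$ is in fact correct under the likelihood-ratio ordering provided by item (iv), and it gives a clean one-coordinate bound of order $(kN)^{-\delta/12}$.

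The gap is in the final step, from marginals to the joint skeleton. You pass to the grand coupling, write $\bbp[\bar\sigma^\nu\neq\bar\sigma^\mu]\leq\sum_{i,j}\bbp[\bar\sigma^\nu(i,j)>\bar\sigma^\mu(i,j)]$, and assert that each summand is "controlled by the one-dimensional TV estimate," citing item (iv). This does not follow. The marginal TV distance is the \emph{infimum} over couplings of the marginal disagreement probability; for a fixed coupling (here the grand coupling) the disagreement probability $\bbp[\bar\sigma^\nu(i,j)>\bar\sigma^\mu(i,j)]$ can be far larger than $\|\bar\nu_{i,j}-\bar\mu_{i,j}\|_{TV}$. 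The only bound the monotone coupling gives directly is $\bbp[\bar\sigma^\nu(i,j)>\bar\sigma^\mu(i,j)]\leq\E[\bar\sigma^\nu(i,j)]-\E[\bar\sigma^\mu(i,j)]=O\bigl(\sqrt{kN}(kN)^{-\delta/6}\bigr)$, which diverges. For each $(i,j)$ there does exist a monotone coupling achieving the marginal TV as disagreement probability, but these couplings for different $(i,j)$ are mutually inconsistent and cannot be merged into a single coupling of the full laws, so the union bound cannot be fed by them. Item (iv) is what makes the one-dimensional anti-concentration work, but it says nothing about the grand coupling being close to the optimal marginal coupling.

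The paper (following Lacoin's Section 5.4, and as spelled out concretely for the analogous $k$-SEP statement in Section 7.1) closes this step differently: one works with the scalar volume functional $v(\bar\sigma)=\sum_{i,j}\bar\sigma(i,j)$ and the increasing events $\ca{A}=\{\bar\sigma(i,j)\geq A\sqrt{kN}\ \forall (i,j)\}$, $\ca{B}=\{\bar\sigma(i,j)<A\sqrt{kN}\ \forall (i,j)\}$, uses the Gaussian limit (coming from the hypergeometric local CLT) to show $\bar\mu(\ca{A})\geq\delta_1>0$ and $\bar\mu(\ca{B})\geq 1-\delta_2$ with $\delta_1,\delta_2$ controllable, and then uses the FKG property of $\bar\mu$ together with the increasing-density structure of $\bar\nu/\bar\mu$ and the heat-equation bound on $\bar\nu(v)$ to bound $\|\bar\nu-\bar\mu\|$ directly, with no per-coordinate union bound. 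You would need to replace your final paragraph by an argument of this type.
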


\begin{proof}[Proof of Prop \ref{skel-upperbound-prop}]
    The main difference here is that we show
    $$ \frac{\bar{\sigma}(i,j)}{\sqrt{kN}} \xrightarrow[N\to \infty]{} Z(i,j) \quad \text{ in distribution} $$
    where $Z(i,j)$ is a Gaussian of mean 0 and variance $\frac{i}{R}(1-\frac{i}{R})\frac{j}{R}(1-\frac{j}{R})$. This can be shown by verifying that for a $k$-permutation $\sigma$,
    $$ \mu\p{\bar{\sigma}(i,j) = \ell-\frac{x_iy_j}{N}} = \frac{\binom{y_j}{\ell}\binom{kN-y_j}{kx_i-\ell}}{\binom{kN}{kx_i}}, $$
    and using this to establish a local central limit theorem. All of the other steps needed to prove this proposition follow exactly as in \cite{lacoin-at}, section 5.4.
\end{proof}

Now that the skeleton measure is close to equilibrium, we can censor again and use similar arguments to proposition $\ref{bound-to-smooth-prop}$ to obtain the following at time $t_3$.

\begin{prop}\label{semi-skel-upperbound-prop}
    For any $\delta, \epsilon > 0$ and $N$ sufficiently large, we have
    $$ \|\widehat{\nu}_{t_3}-\widehat{\mu}\| \leq 2\epsilon/3. $$
\end{prop}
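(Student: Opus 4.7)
The plan is to mimic the structure of Proposition \ref{bound-to-smooth-prop}, using the block decomposition induced by censoring to upgrade skeleton mixing at time $t_2$ into semi-skeleton mixing at time $t_3$. First I would observe that the censoring scheme used on $[t_2,t_3]$ blocks every transition at each position $x_1,\ldots,x_{R-1}$, so the block-partition $\pi(\sigma) := (\sigma(\{x_{i-1}+1,\ldots,x_i\}))_{i=1}^R$ is preserved pathwise; this is strictly stronger than skeleton preservation, so in particular $\bar{\nu}_{t_3} = \bar{\nu}_{t_2}$. Combined with Proposition \ref{skel-upperbound-prop}, this gives $\|\bar{\nu}_{t_3}-\bar{\mu}\|\leq \epsilon/3$.

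Next I would introduce the auxiliary measure
\[
M := \sum_\pi \nu_{t_2}(\pi)\,\mu_\pi,
\]
where $\mu_\pi$ is the uniform distribution on $k$-permutations with block-partition $\pi$ (a product of uniforms on the $R$ blocks). The key structural observation is that the distribution of $\widehat{\sigma}$ under $\mu_\pi$ depends on $\pi$ only through the type-counts $|\sigma(\{x_{i-1}+1,\ldots,x_i\}) \cap \{y_{j-1}+1,\ldots,y_j\}|$, and these are determined by $\bar{\sigma}(\pi)$. Hence
\[
\widehat{M} = \sum_{\bar{\sigma}} \bar{\nu}_{t_3}(\bar{\sigma})\,\widehat{\mu}(\cdot\mid \bar{\sigma}),
\]
and since $\widehat{\mu} = \sum_{\bar{\sigma}}\bar{\mu}(\bar{\sigma})\,\widehat{\mu}(\cdot\mid \bar{\sigma})$, the triangle inequality yields $\|\widehat{M}-\widehat{\mu}\| \leq \|\bar{\nu}_{t_3}-\bar{\mu}\| \leq \epsilon/3$.

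For the remaining piece $\|\widehat{\nu}_{t_3}-\widehat{M}\|$, I would use that projections contract TV distance, bounding it by $\|\nu_{t_3}-M\|$. Conditional on $\pi$, the censored dynamics evolves each block independently as an $S_{k,\Delta x_i}$ shuffle, so by Corollary \ref{wilson-upper-bound-cor} applied blockwise together with subadditivity of TV on product measures,
\[
\|\nu_{t_3}(\cdot\mid \pi)-\mu_\pi\| \leq \sum_{i=1}^R 10k\Delta x_i\,e^{-(t_3-t_2)\lambda_{k,\Delta x_i}},
\]
uniformly in $\pi$. With $t_3-t_2 = (\delta/3)N^2\log(kN)/(2k\pi^2)$, $R=\lceil 1/\delta\rceil$, and $\lambda_{k,\Delta x_i} \geq (1-o(1))k\pi^2/\Delta x_i^2$, the right-hand side is $\leq \epsilon/3$ for $N$ large by essentially the same computation as in Proposition \ref{bound-to-smooth-prop}. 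The triangle inequality then gives $\|\widehat{\nu}_{t_3}-\widehat{\mu}\|\leq 2\epsilon/3$.

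The main obstacle is the structural identity $\widehat{M} = \sum_{\bar{\sigma}} \bar{\nu}_{t_3}(\bar{\sigma})\,\widehat{\mu}(\cdot\mid\bar{\sigma})$, which reduces to the claim that $\mu_\pi(\widehat{\sigma}=\,\cdot\,)$ is a function of the type-counts encoded in $\bar{\sigma}(\pi)$. This is intuitive, since under $\mu_\pi$ only the multiset of labels in each block matters for the semi-skeleton's law, but it needs a short verification using the formula $|C_i\cap[y_j]| = \bar{\sigma}(i,j)-\bar{\sigma}(i-1,j)+\Delta x_i y_j/N$. Once this decoupling is in place, the proof collapses into already-established block-mixing estimates from Section \ref{skn-upper-bound-section}.
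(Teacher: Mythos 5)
Your proof is correct and is in the same spirit as what the paper (delegating to Lacoin's Section 5.5) intends: decompose the semi-skeleton TV error into a skeleton error, handled by Proposition \ref{skel-upperbound-prop} plus the fact that censoring on $[t_2,t_3]$ fixes the skeleton, and a conditional blockwise relaxation error, handled by the product-chain mixing bound exactly as in Proposition \ref{bound-to-smooth-prop}. The one slight repackaging you introduce is to condition on the full block-partition $\pi$ and pass through the auxiliary measure $M=\sum_\pi\nu_{t_2}(\pi)\mu_\pi$, rather than conditioning directly on the skeleton $\bar{\sigma}$. This is harmless and arguably cleaner, since conditional on $\pi$ the censored chain is literally a product of $S_{k,\Delta x_i}$ shuffles, so Corollary \ref{wilson-upper-bound-cor} applies without further translation; conditioning on $\bar{\sigma}$ alone would require an extra remark that the conditional semi-skeleton dynamics factorizes. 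The price is the decoupling identity $\widehat{\mu_\pi}=\widehat{\mu}(\cdot\mid\bar{\sigma}(\pi))$, which you correctly flag as the only genuinely new verification: it holds because under $\mu_\pi$ the cards within a block are exchangeable, so the law of the within-block suit-count increments, and hence of $\widehat{\sigma}$, is a function only of the type-counts $\bigl(|\pi_i\cap\{y_{j-1}+1,\ldots,y_j\}|\bigr)_{i,j}$, which are exactly encoded by $\bar{\sigma}(\pi)$. One small point worth making explicit when you apply Corollary \ref{wilson-upper-bound-cor} blockwise: $\nu_{t_2}(\cdot\mid\pi)$ need not be a product measure across blocks, but since the bound $d^{X\times Y}(t)\leq d^X(t)+d^Y(t)$ holds for the worst-case distance from stationarity, it controls $\|\nu_{t_3}(\cdot\mid\pi)-\mu_\pi\|$ regardless of the initial conditional law.
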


\begin{proof}[Proof of Prop \ref{semi-skel-upperbound-prop}]
    This follows exactly as in \cite{lacoin-at}, section 5.5, making the same adaptations as those for the proof of proposition \ref{bound-to-smooth-prop}.
\end{proof}

Given propositions \ref{bound-to-smooth-prop} and \ref{semi-skel-upperbound-prop}, theorem \ref{skn-upper-bound-thm} is immediate from lemma \ref{mix-decompose-lemma} and theorem \ref{censoring-thm}.


\section{Upper Bound for $k$-SEP}

The goal now is to prove the upper bound on the mixing time of the $k$-SEP. That is,

\begin{thm}
    Fix $\epsilon, \delta > 0$. For all $N$ sufficiently large,
    $$ d^{k,N,m}\p{(1+\delta)\frac{1}{2k\pi^2}N^2\log m} \leq \epsilon. $$
\end{thm}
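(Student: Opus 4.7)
The plan is to port the three-phase censoring-and-coupling scheme of Section \ref{skn-upper-bound-section} directly to $\Omega_{k,N,m}$, so that every estimate depends on $m$ (or $\min(m,kN-m)$) instead of on $kN$. The grand coupling of Section \ref{grand-coupling-section} projects via $\varphi_m$ to a monotone coupling of $k$-SEP height functions, and the censoring inequality (Theorem \ref{censoring-thm}) together with the decomposition Lemma \ref{mix-decompose-lemma} descend to the $k$-SEP partial order. By monotonicity I reduce to showing that the chains started at the extremal configurations $\land$ and $\lor$ coalesce by time $t_3:=(1+\delta)\frac{N^2\log m}{2k\pi^2}$ with probability at least $1-\epsilon$.

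With times $t_1<t_2<t_3$ chosen so that the gaps $t_2-t_1$ and $t_3-t_2$ are both $\Theta(\delta N^2(\log m)/k)$, I would run the same three phases as in Section \ref{skn-upper-bound-section}: phase one censors the boundary transitions $\tau_{x_i}^{\pm}$ across the partition boundaries $x_i=\lceil iN/R\rceil$ so that each sub-segment mixes internally as an independent $k$-SEP with its own local particle count $m_i$; phase two runs the uncensored dynamics to equilibrate the skeleton $\bar\eta(i):=\eta(x_i)$; phase three re-censors the boundary transitions to finish mixing within blocks. Lemma \ref{wilson-upper-bound-lem} applied block-by-block and summed using $\sum_i m_i = m$ controls phases one and three, yielding the $k$-SEP analogs of Propositions \ref{bound-to-smooth-prop} and \ref{semi-skel-upperbound-prop}.

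The substantive new content is the $k$-SEP version of Proposition \ref{skel-upperbound-prop} — showing the skeleton is in equilibrium by time $t_2$. The key input is Proposition \ref{heat-prop} specialized to $y=m$, which gives $\max_x\E[\eta_t^{\land}(x)]\leq 4\min(m,kN-m)e^{-\lambda_{k,N}t}$, and at $t_2$ this becomes $O(m^{1/2-\delta/3})$. This is comfortably below the natural equilibrium fluctuation scale $\sqrt{m(1-m/(kN))}\asymp\sqrt m$ of $\bar\eta(i)$ under $\mu^{k,N,m}$ — whose marginal is hypergeometric with variance $\Theta(m)$, not $\Theta(kN)$ — and a local CLT for the hypergeometric distribution then converts the $L^\infty$ mean bound into a total-variation bound on the skeleton marginal.

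The main obstacle, which the introduction flags as ``slightly more delicate,'' is precisely carrying the factor $\min(m,kN-m)$ through phase two rather than the uniform bound $kN$ that sufficed for the shuffle. Proposition \ref{heat-prop} already supplies this scaling pointwise, but the integrated/area upgrade from the $L^\infty$ mean bound to total variation must start from the initial area $\sum_x\land(x)=\Theta(mN)$ rather than $\Theta(kN^2)$, and the hypergeometric local CLT has to be stated with the correct variance $\Theta(m)$ so that these two $m$-dependent scales line up at time $t_2$. Once this bookkeeping is in place, the conclusion follows from Lemma \ref{mix-decompose-lemma} and Theorem \ref{censoring-thm} exactly as in the last paragraph of Section \ref{skn-upper-bound-section}.
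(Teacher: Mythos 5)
Your proposal correctly identifies the $m$-dependent heat-equation scaling (Proposition \ref{heat-prop} at $y=m$) and the hypergeometric skeleton CLT with variance $\Theta(m)$ rather than $\Theta(kN)$, and these are indeed the ingredients of the paper's Section 7.1. But there is a genuine gap: the three-phase censoring argument you port from Section \ref{skn-upper-bound-section} bounds $\|P_{t}^\land - \mu\|$ and $\|P_{t}^\lor - \mu\|$, and in the shuffle case this is sufficient because the shuffle is transitive, so every starting state gives the same TV distance. The $k$-SEP is \emph{not} transitive on $\Omega_{k,N,m}$, so bounding the TV distance from the two extremal starting states does not bound $d^{k,N,m}(t) = \max_\xi \|P_t^\xi - \mu\|$. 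Your opening sentence correctly states the reduction the paper actually uses — namely $\max_\xi \|P_t^\xi - \mu\| \leq \bbp(\eta_t^\land \neq \eta_t^\lor)$ via the grand coupling — but the three-phase censoring scheme you then describe proves closeness to equilibrium, not coalescence. The two are not interchangeable: monotonicity gives the sandwich $P_t^\lor(A) \leq P_t^\xi(A) \leq P_t^\land(A)$ only for increasing events $A$, while total variation is a supremum over all events, and the monotone dynamical coupling need not realize the optimal static coupling between $P_t^\land$ and $P_t^\lor$.

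Concretely, the crux of the $k$-SEP upper bound is the area-decay supermartingale argument (Section 7.2 and Proposition \ref{ksep-path-converge-prop}): one sets $A(t) = \sum_x(\eta_t^\land - \eta_t^\lor)(x)$, shows it is a supermartingale with drift controlled by counting active coordinates, compares against a one-dimensional random walk via stopping times $\tau_i$, and crucially uses Lemma \ref{avoid-bad-paths-lem} to avoid pathological configurations — which is precisely why the near-equilibrium result that your three phases would establish is needed, but only as an \emph{input} to the area-decay argument, not as the conclusion. Note also that a naive Markov bound on coalescence from Proposition \ref{heat-prop} alone, namely $\bbp(\eta_t^\land \neq \eta_t^\lor) \leq \sum_x \E[(\eta_t^\land - \eta_t^\lor)(x)] \lesssim N\min(m,kN-m)e^{-\lambda_{k,N}t}$, gives roughly $Nm^{(1-\delta)/2}$ at time $t_3$, which is far larger than $1$; this is exactly why the iterated stopping-time scheme with the bad-set estimate is needed to close the argument. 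Your proposal therefore proves a necessary lemma but omits the piece that actually yields the theorem.
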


To this end, we construct an alternative coupling to that described in section \ref{grand-coupling-section} and analogous to that described in \cite{lacoin-at}, section 8.1. We use techniques from \cite{lacoin-at} in essentially the same manner, so we leave out some details and refer the interested reader to that paper for parts that work identically there.

Let $\Theta := \{(x,z) \mid x \in [N-1],\  \max(0, kx-Nk+m) \leq z+xm/N \leq \min(kx, m)\}$ (so that $z$ are possible values for $\eta(x)$), and let $\ca{T}^\uparrow$, $\ca{T}^\downarrow$ be two independent Poisson processes with rate $k^2$ indexed by $\Theta$. Lastly, define
$$ p_{a,b} := \frac{b(k-a)}{k^2} \qquad \qquad q_{a,b} := \frac{a(k-b)}{k^2}. $$
Now we describe the coupling.
\begin{itemize}
    \item Suppose $\ca{T}^\uparrow_{(x,z)} = t$. If $\eta_{t^-}^\xi(x) = z$ and $(\gamma_{t^-}^\xi(x), \gamma_{t^-}^\xi(x+1)) = (a, b)$, then with probability $p = p_{a,b}$ we let $\eta_t^\xi(x) = z+1$, and with probability $1-p$ we do nothing.
    \item Suppose $\ca{T}^\downarrow_{(x,z)} = t$. If $\eta_{t^-}^\xi(x) = z$ and $(\gamma_{t^-}^\xi(x), \gamma_{t^-}^\xi(x+1)) = (a, b)$, then with probability $q = q_{a,b}$ we let $\eta_t^\xi(x) = z-1$, and with probability $1-q$ we do nothing.
\end{itemize}
The dynamics obtained is precisely the $k$-SEP and retains the same monotonicity property proved in lemma \ref{grand-monotonicity} for the coupling introduced in section \ref{grand-coupling-section}. Let $\bbp$ be the law of the coupling just introduced. We have for any $\xi$ that $\|P_t^\xi -\mu\| = \|P_t^\xi-P_t^\mu\| \leq \max_{\xi'} \|P_t^\xi-P_t^{\xi'}\|$ by the triangle inequality. Further, by the characterization of total variation distance in terms of optimal couplings and the monotonicity of this coupling, $\|P_t^\xi-P_t^{\xi'}\| \leq \bbp(\eta_t^{\xi} \neq \eta_t^{\xi'}) \leq \bbp(\eta_t^\land \neq \eta_t^\lor)$. Therefore, $\max_\xi \|P_t^\xi-\mu\| \leq \bbp(\eta_t^\land \neq \eta_t^\lor)$ and our task is reduced to bounding the latter term.

We do this by splitting the procedure into three steps. Define
\begin{align*}
    t_1 &:= (1+\delta/4)\frac{N^2}{2k\pi^2}\log m, \\
    t_2 &:= (1+\delta/2)\frac{N^2}{2k\pi^2}\log m, \\
    t_3 &:= (1+\delta)\frac{N^2}{2k\pi^2}\log m.
\end{align*}
The idea will be that after $t_2$, the area between $\eta_t^\land$ and $\eta_t^\lor$ will be smaller than the equilibrium fluctuations. Formally, that is to say
$$ A(t) := \sum_{x=1}^{N-1} (\eta_t^\land-\eta_t^\lor)(x) \ll N\sqrt{m}. $$
Then, between times $t_2$ and $t_3$, we will compare the supermartingale $A(t)$ with a random walk to get convergence to zero with high probability. From this point, the result is straightforward.

\begin{prop}\label{ksep-path-converge-prop}
    $\bbp(\eta_{t_3}^\land \neq \eta_{t_3}^\lor) \leq \epsilon$.
\end{prop}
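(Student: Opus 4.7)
The plan follows the three-stage scheme sketched just before the proposition, which is the $k$-SEP analog of Lacoin's argument in Section~8.1 of \cite{lacoin-at}.

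Step 1 (area contraction up to $t_2$): Set $u(x,t):=\E[\eta^\land_t(x)-\eta^\lor_t(x)]$. Reusing the generator computation of Section~5.1 for the coupling just constructed, $\E[\eta_t(x)]$ satisfies the Dirichlet discrete heat equation $\partial_t f = k\Delta_x f$ on $\{1,\ldots,N-1\}$; by linearity $u(\cdot,t)$ does as well, with initial data $u(x,0)=\land(x)-\lor(x)\leq 2\min(m,kN-m)$. Proposition~\ref{heat-prop} then gives
$$ u(x,t_2) \leq C\min(m,kN-m)\,e^{-\lambda_{k,N}t_2} \leq C\sqrt{m}\,m^{-\delta/4}, $$
using $\lambda_{k,N}t_2 = \tfrac12(1+\delta/2)\log m\,(1+o(1))$. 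Summing in $x$ and applying Markov's inequality produces an event $\ca{E}_1$ with $\bbp[\ca{E}_1]\geq 1-\epsilon/2$ on which $A(t_2)\leq N\sqrt{m}\,m^{-\delta/8}$; in particular $A(t_2)=o(N\sqrt{m})$.

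Step 2 (extinction on $[t_2,t_3]$): By the monotone coupling, $A(t)$ is a non-negative integer-valued supermartingale, and on $\ca{E}_1$ it starts from a value $\ll N\sqrt{m}$. I would compare $A$ on $[t_2,t_3]$ to a continuous-time random walk on $\bbz_{\geq 0}$ absorbed at $0$: summation-by-parts of the heat equation identifies the predictable drift of $A(t)$ as $-k\bigl(u(1,t)+u(N-1,t)\bigr)$, and the Poissonian structure of the updates bounds the predictable quadratic variation per unit time by a multiple of $k^2$ times the number of disagreement columns $\#\{x:\eta^\land_t(x)>\eta^\lor_t(x)\}$. A standard first-passage estimate for such a walk, combined with the length $t_3-t_2=\Theta(N^2\log m/k)$ of the interval, yields absorption at $0$ before $t_3$ on an event $\ca{E}_2$ with $\bbp[\ca{E}_2\mid\ca{E}_1]\geq 1-\epsilon/2$. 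Step 3 is then immediate: by monotonicity $\eta^\land_\tau=\eta^\lor_\tau$ the instant $A(\tau)=0$, and the two trajectories remain identical thereafter, so a union bound over $\ca{E}_1^c\cup\ca{E}_2^c$ finishes the proof.

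The main obstacle is Step 2. Pure heat-equation contraction on $[t_2,t_3]$ only reduces $\E[A]$ by the factor $m^{-\delta/4}$, which is too weak to force $\E[A(t_3)]<1$ across the full regime $m\leq kN/2$ (already for $m=O(N)$ the expected area remains polynomial in $N$). The subtler argument — the ``slightly more delicate area decay'' alluded to in the introduction — must exploit the integrality of $A$ and balance the boundary drift $u(1,t)+u(N-1,t)$ against the martingale fluctuations. This is where the capacity $k$ genuinely enters the analysis: the quadratic variation of $A$ scales with $k^2$ while its drift scales with $k$, so the first-passage estimate must track the correct $k$-dependence in order for the walk to be absorbed within the available time $t_3-t_2=\Theta(N^2\log m/k)$.
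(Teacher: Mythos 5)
Your Step 1 and the overall three-stage framing are consistent with the paper, and you are right that the crux is Step 2. However, the mechanism you sketch for Step 2 is not the one the paper uses, and as you yourself concede in the final paragraph, the version you actually write down does not close. The gap is specific: you invoke an upper bound on the quadratic variation of $A$ (``a multiple of $k^2$ times the number of disagreement columns''), but first-passage estimates for absorption of a supermartingale require a \emph{lower} bound on the jump activity. Nothing in your argument rules out long stretches of time in which $d(t)+u(t)$ is tiny — say, when the discrepancy between $\eta^\land_t$ and $\eta^\lor_t$ is concentrated in a region where the configuration is locally all-empty or all-full, so that hardly any Poisson clocks at the active columns produce an accepted jump. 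In that regime $A(t)$ is a supermartingale that barely moves, and no amount of exploiting the integrality of $A$ or balancing the expected boundary drift $-k\bigl(u(1,t)+u(N-1,t)\bigr)$ (which is a statement about $\E[A]$, not about the pathwise drift $d-u$) forces absorption within $t_3-t_2$.

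The paper's resolution is structurally different from a ``standard first-passage estimate.'' It (i) reconstructs $(\eta^\land_t,\eta^\lor_t)$ so that $A(t+t_2)=\tilde{S}\bigl(\int_0^t(d(s)+u(s))\,ds\bigr)$ for an explicit random walk $\tilde{S}$ dominated by a rate-one simple random walk; (ii) uses Lemma~\ref{walk-stopping-time-lem} to control $\tilde{S}$'s successive passage times $\tau_{i+1}-\tau_i$ in the walk's own clock; and, crucially, (iii) proves a \emph{lower} bound $(d+u)(t)\geq m^{1-(i+2)\epsilon}/(8(\log m)^2)$ valid whenever the current height functions avoid a set $\ca{H}=\ca{H}_1\cup\ca{H}_2$ of ``bad'' configurations (too-large excursions, or long all-$0$/all-$k$ stretches), together with Lemma~\ref{avoid-bad-paths-lem} showing $\ca{H}$ is rarely visited after $t_2$ — and this is exactly why the intermediate result $\|P_{t_2}^\land-\mu\|\to 0$ is needed. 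Converting the walk-time bound into a physical-time bound through this lower bound on $(d+u)(t)$ is the missing ingredient in your proposal; without it, the comparison to a random walk does not imply absorption by $t_3$.
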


However, to perform this last step, we need that the distribution of $\eta_t^\land$ (and $\eta_t^\lor$) is close to equilibrium for technical reasons that will appear in the latter half of section \ref{area-decay-section}. This leads us to our next section.

\subsection{Reaching equilibrium}

\begin{thm}
    For all $N$ sufficiently large,
    $$ \lim_{N\to\infty} \|P_{t_2}^\land - \mu\| = \lim_{N\to\infty} \|P_{t_2}^\lor - \mu\| = 0. $$
\end{thm}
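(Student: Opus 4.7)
The plan is to adapt the three-phase censoring argument of Section \ref{skn-upper-bound-section} to the $k$-SEP, exploiting the fact that the height function $\eta$ takes values of magnitude at most $m$ (rather than $kN$) through the identification $\eta(x) = \tilde{\sigma}(x, m)$ under the projection $\varphi_m$. This is what converts the $\log(kN)$ estimates of Section \ref{skn-upper-bound-section} into the $\log m$ estimates needed here. The two key analytic inputs are already in place: Proposition \ref{heat-prop}, which gives $\max_x \E[\eta_t^\land(x)] \leq 4m e^{-\lambda_{k,N} t}$, and Lemma \ref{wilson-upper-bound-lem}, which yields a coupling bound on the $\log m$ time scale.

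Fix $R := \ceil{1/\delta}$, set $x_i := \ceil{iN/R}$, and define the skeleton $\bar{\eta}(i) := \eta(x_i)$. Given a probability measure $\nu$ on $\Omega_{k,N,m}$, write $\tilde{\nu}(\gamma) := \bar{\nu}(\bar{\eta}(\gamma)) \cdot \mu(\gamma \mid \bar{\eta}(\gamma))$ for the associated smoothed measure; the $k$-SEP analog of Lemma \ref{mix-decompose-lemma}, proved by the same averaging argument, gives $\|\nu - \mu\| \leq \|\nu - \tilde{\nu}\| + \|\bar{\nu} - \bar{\mu}\|$. Partition $[0, t_2]$ into three subintervals of durations $s_1 = s_3 = \frac{\delta}{6}\cdot\frac{N^2}{2k\pi^2}\log m$ and $s_2 = \p{1+\frac{\delta}{6}}\frac{N^2}{2k\pi^2}\log m$. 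During Phases 1 and 3, censor all clocks that would move a particle across a boundary $\{x_i, x_i+1\}$; this is legitimate by the $k$-SEP version of Theorem \ref{censoring-thm}.

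Phase 1 decouples into $R$ independent $k$-SEPs on blocks of length $\sim N/R$ with at most $m$ particles each. Lemma \ref{wilson-upper-bound-lem} applied blockwise bounds the error by $10 m \exp(-s_1 \lambda_{k, N/R}) = m^{1 - \Theta(\delta R^2)}$, which is $o(1/R)$ for the stated choice of $R$, so summing gives $\|\nu_{s_1} - \tilde{\nu}_{s_1}\| = o(1)$. Phase 3 is handled identically: starting from $\nu_{s_1+s_2}$, the censored block dynamics drives the intra-block conditional law to uniform, yielding $\|\nu_{t_2} - \tilde{\nu}_{t_2}\| = o(1)$ uniformly over the skeleton (the Wilson bound being a max over starting states). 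The core step is Phase 2, where one shows $\|\bar{\nu}_{s_1+s_2} - \bar{\mu}\| = o(1)$ by a local CLT argument analogous to Proposition \ref{skel-upperbound-prop}. By Proposition \ref{heat-prop}, $\E[\eta^\land_{s_1+s_2}(x_i)] = O(m \cdot m^{-(1+\delta/6)}) = o(\sqrt{m})$, while the marginal $\bar{\mu}_i$ is a hypergeometric-type distribution with variance of order $m \cdot x_i(N-x_i)/N^2 \sim m$. A Stirling-based local CLT, as in \cite{lacoin-at}, Section 5.4, upgrades the mean-variance comparison to convergence in total variation; combining via the decomposition closes the argument.

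The principal obstacle is the Phase 2 local CLT. When $m \ll N$, the equilibrium skeleton fluctuations are only of order $\sqrt{m}$, not $\sqrt{kN}$, so the decaying mean from the extreme initial condition $\land$ must be pushed below the $\sqrt{m}$ scale; this is precisely why the cutoff constant involves $\log m$ rather than $\log(kN)$. The multivariate skeleton marginal under $\mu^{N,k,m}$ is hypergeometric with covariance structure dictated by the $k$-dependent binomial weights, and the local CLT requires careful bookkeeping of these weights in the Stirling expansion. The FKG and monotone coupling machinery from the preceding sections then combines with the local CLT exactly as in the shuffle case, and the bound for $\lor$ follows by the reflection symmetry $x \mapsto N+1-x$.
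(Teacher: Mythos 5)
Your proposal is essentially correct, but it takes a genuinely different route from the paper. The paper follows Lacoin Section~8.2 and uses only \emph{two} phases: an uncensored phase $[0,t_1]$ during which Proposition~\ref{heat-prop} decays the expected skeleton height below $\sqrt{m}$, and a censored phase $[t_1,t_2]$ during which the skeleton is frozen and Lemma~\ref{wilson-upper-bound-lem} (applied blockwise) mixes the conditional law given the skeleton. Your three-phase scheme, lifted from Section~\ref{skn-upper-bound-section}, carries over an unnecessary Phase~1: in the $S_{k,N}$ shuffle there are three levels ($\sigma$, semi-skeleton $\widehat{\sigma}$, skeleton $\bar{\sigma}$), and Phase~1 handles the innermost level, but for the $k$-SEP the configuration $\eta$ \emph{is} already a semi-skeleton (namely $\tilde{\sigma}(\cdot,m)$), so only two levels exist. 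Your Phase~1 conditional bound is destroyed by the uncensored Phase~2 and then re-established by Phase~3, so it buys nothing. Moreover, the propagation trick used in Section~\ref{skn-upper-bound-section} (namely, that $\|\nu_t-\tilde{\nu}_t\|$ is non-increasing because smoothing intertwines with the $S_{k,N}$ generator) does \emph{not} transfer here: the $k$-SEP transition rates genuinely depend on the configuration inside each block, so skeleton-level smoothing does not commute with the generator. You correctly sidestep this by relying on Phase~3 alone, which is exactly why Phase~1 is dead weight.

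Your Phase~2 also differs from the paper: you invoke a Stirling-based local CLT for the hypergeometric skeleton marginal (\`a la Proposition~\ref{skel-upperbound-prop}), whereas the paper uses the softer Brownian-bridge convergence of $\sqrt{Nk/(m(Nk-m))}\,\bar{\eta}(\cdot)$ together with an increasing-density/volume argument on the events $\ca{A}_i = \{\bar{\eta}(i)\geq A\sqrt{m}\}$. Both are legitimate; the local CLT gives pointwise density control at the cost of heavier bookkeeping, while the volume argument avoids any local estimate. One small numerical slip: from $\lambda_{k,N}\sim k\pi^2/N^2$ and $s_2 = (1+\delta/6)\tfrac{N^2}{2k\pi^2}\log m$ one gets $e^{-\lambda_{k,N}s_2} = m^{-(1+\delta/6)/2}$, so the mean bound is $O\bigl(m^{1/2-\delta/12}\bigr)$, not $O\bigl(m^{-\delta/6}\bigr)$ as you wrote; the conclusion $o(\sqrt m)$ is unchanged. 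Finally, you assert $k$-SEP analogues of Theorem~\ref{censoring-thm} and Lemma~\ref{mix-decompose-lemma} without proof; these do hold (either by direct adaptation of the monotone coupling in Section~7, or by pushing the $S_{k,N}$ statements through the projection $\varphi_m$), but they should be spelled out.
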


\begin{proof}
    The proof is essentially the same as in \cite{lacoin-at}, section 8.2. Thus, we will only give the broad ideas and modifications, and refer the reader to \cite{lacoin-at} for the details. Since the processes are symmetric, we only prove convergence for $\land$.

    Fix $R=\ceil{1/\delta}$. Between time 0 and time $t_1$, we run the standard (non-censored) dynamics. Between time $t_1$ and time $t_2$, we run a censored dynamics, which censors updates at $(x_k, i_k, j_k)$, analogously to section \ref{skn-upper-bound-section}. Denote the law of $\eta^\land_t$ under this dynamics by $\nu_t$. By our censoring inequality, we have $\|P_t-\mu\| \leq \|\nu_t-\mu\|$ for all $t\geq 0$. Since $\eta_t$ essentially corresponds to a semi-skeleton of $S_{k,N}$, we can use the logic behind the proof of proposition \ref{semi-skel-upperbound-prop} and bound 
    $$\|\nu_t-\mu\| \leq \|\bar{\nu}_t-\bar{\mu}\| + \max_\xi \|\nu_t(\cdot \mid \bar{\eta} = \xi) - \mu(\cdot \mid \bar{\eta} = \xi)\|,$$
    our goal being to bound this by arbitrary $\epsilon>0$. The latter term is easily bounded by $\epsilon/2$ (for large $N$) using lemma \ref{wilson-upper-bound-lem} for time $t_2-t_1$, since the dynamics are a product of $R$ independent $k$-SEPs during this time. The prior term requires similar treatment to that done in the proof of proposition \ref{skel-upperbound-prop}.

    Let $\ca{A}_i = \{\eta \mid \bar{\eta}(i) \geq A\sqrt{m}\}$, let $\ca{A} = \bigcap_{i=1}^{R-1} \ca{A}_i$, and let $\ca{B} = \bigcap_{i=1}^{R-1} \ca{A}_i^c$. We can notice that
    $$ \p{\sqrt{\frac{Nk}{m(Nk-m)}}\bar{\eta}(i)}_{i=0,\ldots,R} \xrightarrow[N\to\infty]{} (Y_i)_{i=0,\ldots,R} \quad \text{ in distribution} $$
    where $Y$ is a Gaussian process with covariance function $\E[Y_iY_j1_{i\leq j}] = \frac{i}{R}(1-\frac{j}{R})1_{i\leq j}$, and this is because the analogous process replacing $\bar{\eta}$ with $\eta$ converges to the Brownian bridge. Therefore, for large $A$ there exist $\delta_1, \delta_2 > 0$ small (as small as needed) such that $\mu(\ca{A}) \geq \delta_1$ and $\mu(\ca{B}) \geq 1-\delta_2$. Let $v(\bar{\eta}_t) = \sum_{i=1}^{R-1} \bar{\eta}(i)$ be the volume below the skeleton $\bar{\eta}_t$. By proposition \ref{heat-prop}, we can show that $\nu(v(\bar{\eta})) \leq \delta_1\alpha A\sqrt{m}$ for any fixed $\alpha$ for $N$ sufficiently large. We can use then this to prove that $\nu(A) \leq (1+\alpha)$, which will in turn show that $\|\bar{\nu}-\bar{\mu}\| \leq 2\alpha+\delta_2$. Thus, by setting $\alpha = \epsilon/8$ and letting $\delta_2 \leq \epsilon/4$, we have the theorem.
\end{proof}

\subsection{Area Decay}\label{area-decay-section}

We now prove proposition \ref{ksep-path-converge-prop}. As in the previous section, the proof is essentially the same as in \cite{lacoin-at}, section 8.4. Thus, we again only give the broad ideas and modifications and refer the reader to \cite{lacoin-at} for the details.

We say that $x \in [N-1]$ is an active coordinate at time $t$ if there is some $y \in \{x-1, x, x+1\}$ such that $\eta_t^\land(y) > \eta_t^\lor(y)$. Let $B(t) \subseteq [N-1]$ be the set of active coordinates at time $t$. Given $\eta$ a height function, let $f_x(\eta) = (\gamma(x), \gamma(x+1))$. Define the sets of active points by
\begin{align*}
    C^1_{ab} &= \{(x,z): x\in B(t), \eta_t^\land(x) = z, f_x(\eta_t^\land) = (a,b)\} \\
    C^2_{ab} &= \{(x,z): x\in B(t), \eta_t^\lor(x) = z, f_x(\eta_t^\lor) = (a,b)\},
\end{align*}
and let $C = \bigcup_{a,b} C^1_{ab} \cup C^2_{ab}$. The rate of decrease and increase for $A(t)$ are respectively 
\begin{align*}
    d(t) &= \sum_{a,b} \p{a(k-b)|C^1_{ab}|+b(k-a)|C^2_{ab}|} \\
    u(t) &= \sum_{a,b} \p{a(k-b)|C^2_{ab}|+b(k-a)|C^1_{ab}|}.
\end{align*}
One can observe that $(d-u)(t) \in \{0, k, 2k, \ldots, 2k^2\}$ and thus $A(t)$ is a supermartingale.

Given a sequence of iid exponential variables $(e_n)_{n\geq 0}$ and a Bernoulli sequence with parameters $1/2$, $(V_n)_{n\geq 0}$, we can reconstruct the dynamics of $(\eta^\land_t, \eta^\lor_t)$ as follows.
\begin{itemize}
    \item Updates of nonactive coordinates are performed with appropriate rate independently of $e$ and $V$.
    \item For active coordinates: After the $(n-1)$th update, wait for $e_n/(u(t)+d(t))$ and then
    \begin{itemize}
        \item If $V_n = -1$, choose an active point randomly in $C$ but pick points in $C^1_{ab}$ with probability $a(k-b)/d(t)$ and in $C^2_{ab}$ with probability $b(k-a)/d(t)$. If the active point was $(x,z) = (x,\eta^\land_t(x))$, then subtract 1 from $\eta^\land_t(x)$. If instead $(x,z) = (x,\eta^\lor_t(x))$, then add 1 to $\eta^\lor_t(x)$.
        \item If $V_n = 1$, then with probability $\frac{d-u}{d+u}(t)$, we choose an active point randomly in $C$ and update as we did in the previous bullet point. With probability $\frac{2u}{d+u}(t)$, we pick a point randomly in $C$ but pick points in $C^1_{ab}$ with probability $b(k-a)/u(t)$ and in $C^2_{ab}$ with probability $a(k-b)/u(t)$. We then add 1 if the active point came from $\eta^\land_t$ an subtract 1 if it came from $\eta^\lor_t$.
    \end{itemize}
\end{itemize}

$\eta_t^\land$ and $\eta_t^\lor$ merge after finitely many updates of active coordinates whp, so we let $\ca{N}$ be the last one used. Let $W_n = 1$ if the $n$th update increases the area and $W_n = -1$ if it decreases the area. By construction, $W_n \leq V_n$. Let $\tilde{S}_t$ be a random walk defined by $\tilde{S}_t = A(t_2) + \sum_{n=1}^N W_n$ where $\sum_{n=1}^N e_n \leq t < \sum_{n=1}^{N+1} e_n$. If $t \geq \sum_{n=1}^{\ca{N}} e_n$, then we let $\tilde{S}_t = 0$. Note that

$$ A(t+t_2) = \tilde{S}\p{\int_0^t (d(s)+u(s))ds}. $$

We also define stopping times as follows:
\begin{align*}
    &\tau_i := \min_{t\geq 0} \{\tilde{S}_t \leq m^{1/2-(i+1)\epsilon}N\} &&\tau_\infty := \min_{t\geq 0} \{\tilde{S}_t = 0\} \\
    &\tau'_i := \min_{t\geq 0} \{A(t+t_2) \leq m^{1/2-(i+1)\epsilon}N\} &&\tau'_\infty := \min_{t\geq 0} \{A(t+t_2) = 0\}.
\end{align*}
Because of our time-changing relation between $A$ and $\tilde{S}$, we have that $\tau_{i+1}-\tau_i = \int_{\tau'_i}^{\tau'_{i+1}} (d(t)+u(t))dt$. In particular, a good lower bound on $d(t)+u(t)$ and a good upper bound on $\tau_{i+1}-\tau_i$ would give a good upper bound for $\tau'_{i+1}-\tau'_i$ and thus $\tau'_\infty$. Note that by proposition \ref{heat-prop}, we know that $\tau_2 = \tau'_2 = 0$ whp. Additionally by comparing $\tilde{S}$ with a simple random walk (by replacing $W$ with $V$) we get the following lemma.

\begin{lem}[\cite{lacoin-at}, Lemma 8.6]\label{walk-stopping-time-lem}
    If $\epsilon < \delta/100$. Then whp $\tau_\infty - \tau_{\ceil{1/(2\epsilon)}+1} \leq N^2$ and for all $i \in \{2,\ldots, \ceil{1/2\epsilon}\},$
    $$ \tau_{i+1}-\tau_i \leq m^{1-(2i+1)\epsilon}N^2. $$
\end{lem}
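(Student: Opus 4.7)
The plan is to reduce to hitting-time estimates for a simple symmetric random walk via the pointwise domination $W_n \le V_n$. For each stage $i$, I would invoke the strong Markov property at $\tau_i$ (relative to the filtration generated by the $V_n$'s together with the auxiliary active-coordinate choice randomness) to restart the $V$ sequence and produce a simple random walk $\check{S}^{(i)}_n := \tilde{S}_{\tau_i} + \sum_{k=1}^n V_{n_{\tau_i}+k}$ that dominates $\tilde{S}_{\tau_i+\cdot}$ pointwise (in the discrete-update index). Consequently, if $\check{S}^{(i)}$ descends to $h_{i+1} := m^{1/2-(i+2)\epsilon}N$ within $T_i := m^{1-(2i+1)\epsilon}N^2$ update steps, then so does $\tilde{S}_{\tau_i+\cdot}$, and the bound $\tau_{i+1}-\tau_i \le T_i$ follows after accounting for the exponential waiting times (which are each $\Theta(1)$ and concentrate).

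The core estimate is that a simple random walk starting at $h_i := m^{1/2-(i+1)\epsilon}N$ descends below $h_{i+1}$ within $T_i$ steps with probability $1-O(m^{-\epsilon/2})$. The downward gap is $L := h_i - h_{i+1} = h_i(1 - m^{-\epsilon}) = \Theta(h_i)$, whereas the time budget gives a fluctuation scale $\sqrt{T_i} = m^{\epsilon/2} h_i \gg L$. By the reflection principle, the probability that the walk stays above $h_{i+1}$ throughout $T_i$ steps is of order $L/\sqrt{T_i} = m^{-\epsilon/2}$, which tends to zero. A union bound over the $\lceil 1/(2\epsilon)\rceil = O(1)$ stages then yields the first claim whp. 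For the terminal segment $\tau_\infty - \tau_{\lceil 1/(2\epsilon)\rceil+1}$, the walk starts at height at most $m^{-2\epsilon}N$, and the same reflection argument applied to the dominating simple random walk shows that it reaches $0$ in an additional $N^2$ steps with probability $1-O(m^{-2\epsilon})$.

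The main obstacle is the careful setup of the strong Markov property: one must verify that $\tau_i$ is a stopping time for the enlarged filtration generated by $(V_n)$ together with the active-coordinate choice randomness, so that the post-$\tau_i$ $V$'s remain iid Bernoulli and independent of $\mathcal{F}_{\tau_i}$, and so that the pointwise domination transfers from the discrete index to the continuous time variable. Once this is in place, and one notes that the single-step overshoot $\tilde{S}_{\tau_i^-}-\tilde{S}_{\tau_i} \le 1$ is negligible compared to $L$, the remaining work is a routine reflection/Hoeffding computation. This is exactly the content of \cite{lacoin-at}, Lemma 8.6; no new ideas are required for the $k$-SEP, since the $k$-dependent rates $p_{a,b}, q_{a,b}$ enter only through the time-change $\int_0^t(u+d)(s)\,ds$ and do not affect the analysis of $\tilde{S}$.
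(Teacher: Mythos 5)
Your proposal is correct and takes exactly the approach indicated in the paper, which cites \cite{lacoin-at}, Lemma 8.6 and whose only hint about the argument is the preceding sentence ``by comparing $\tilde{S}$ with a simple random walk (by replacing $W$ with $V$).'' The pointwise domination $W_n \leq V_n$, the restart at the stopping time $\tau_i$ in the enlarged filtration, the reflection-principle estimate $L/\sqrt{T_i} = O(m^{-\epsilon/2})$ with the check $\sqrt{T_i} = m^{\epsilon/2}h_i$, and the union bound over $O(1/\epsilon)$ stages are all as in Lacoin; your handling of the Poisson time change (a concentration step converting update count to $\tilde{S}$-time) is slightly informal but standard and harmless.
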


This covers the latter part of our strategy. Thus, we're left with providing a good lower bound on $d(t)+u(t)$. In order to do this, we define a set of configurations that present a lot of active coordinates (or rather, those that don't). Let $\ca{H}_1 = \{\eta \mid \max_x |\eta(x)| \geq \sqrt{m}\log m\}$ and let $\ca{H}_2 = \{\eta \mid \gamma_\eta([x, x+2(N/m)(\log m)^2]) = \{0\} \text{ or } \{k\}\}$ where $\gamma_\eta$ is the configuration associated to $\eta$. Then we say $\ca{H} = \ca{H}_1 \cup \ca{H}_2$ consists of the ``bad" configurations. Our crucial lemma is thus that we avoid bad configurations most of the time after $t_2$.

\begin{lem}[\cite{lacoin-at}, Lemma 8.8]\label{avoid-bad-paths-lem}
    $\lim_{N\to\infty} \mu(\ca{H}) = 0$ and thus
    $$ \lim_{N\to\infty} \bbp\s{\p{\int_{t_2}^{t_2+\ceil{1/2\epsilon}N^2} 1_{\eta_t^\land \in \ca{H} \text{ or } \eta_t^\lor \in \ca{H}}dt} \geq N^2/2} = 0. $$
\end{lem}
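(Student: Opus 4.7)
The lemma naturally splits into an equilibrium estimate $\mu(\ca{H}) \to 0$ and a time-integrated estimate on $[t_2, t_2 + \lceil 1/(2\epsilon)\rceil N^2]$, and my plan is to prove the former directly and then bootstrap to the latter using the near-equilibrium theorem from the previous subsection.

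For the equilibrium piece, I would union bound $\mu(\ca{H}) \leq \mu(\ca{H}_1) + \mu(\ca{H}_2)$ and handle each term separately. For $\ca{H}_1$, the marginal of $\eta(x)$ under $\mu$ is obtained by sampling $\gamma(x) \sim \text{Hypergeometric}(kN, m, k\cdot)$, so $\eta(x)$ is approximately Gaussian with standard deviation of order $\sqrt{m}$ (this is the same convergence to the Brownian bridge used in the proof of Proposition \ref{skel-upperbound-prop}, specialized to a single coordinate). A union bound over $x \in [N]$ against a Gaussian tail bound yields $\mu(\ca{H}_1) \leq N \exp(-c(\log m)^2) = o(1)$. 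For $\ca{H}_2$, I would estimate directly using the canonical measure: writing $\ell = \lceil 2(N/m)(\log m)^2\rceil$ and $p = m/(kN) \leq 1/2$,
\begin{equation*}
\mu(\gamma|_{[x,x+\ell]} \equiv 0) = \binom{kN}{m}^{-1}\binom{k(N-\ell-1)}{m} \leq (1-p)^{k(\ell+1)} \leq e^{-m\ell/N} \leq e^{-2(\log m)^2},
\end{equation*}
and similarly $\mu(\gamma|_{[x,x+\ell]} \equiv k) \leq p^{k(\ell+1)} \leq 2^{-k(\ell+1)} \leq e^{-2(\log m)^2}$ (using $m \leq kN/2$ for the first step and $k \geq 1$). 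Union bounding over $x$ gives $\mu(\ca{H}_2) \leq 2N e^{-2(\log m)^2} = o(1)$ since $m \to \infty$ forces $(\log m)^2 \gg \log N$ in the regime where $\ell < N$ (otherwise $\ca{H}_2$ is vacuous).

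For the time-integrated estimate, the theorem from Section 7.1 gives $\|P_{t_2}^\land - \mu\|_{TV} \to 0$ and $\|P_{t_2}^\lor - \mu\|_{TV} \to 0$. Since total variation distance to equilibrium is non-increasing along a Markov semigroup, for every $t \geq t_2$,
\begin{equation*}
\bbp[\eta_t^\land \in \ca{H}] \leq \mu(\ca{H}) + \|P_t^\land - \mu\|_{TV} \leq \mu(\ca{H}) + \|P_{t_2}^\land - \mu\|_{TV} = o(1),
\end{equation*}
uniformly in $t$, and the same bound holds for $\eta_t^\lor$. Fubini--Tonelli then gives
\begin{equation*}
\E\s{\int_{t_2}^{t_2+\ceil{1/(2\epsilon)}N^2} 1_{\eta_t^\land \in \ca{H} \text{ or } \eta_t^\lor \in \ca{H}}\, dt} \leq 2 \ceil{1/(2\epsilon)} N^2 \cdot o(1) = o(N^2),
\end{equation*}
and Markov's inequality applied to this nonnegative random variable at threshold $N^2/2$ closes the argument.

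The main obstacle is the $\ca{H}_2$ bound, which is the one place where the fixed particle count $m$ creates correlations that need to be controlled; I expect the hypergeometric identity above to be the cleanest route, with the $2$ in the definition of $\ell$ being exactly the factor that gives $e^{-2(\log m)^2}$ and thus enough room to absorb the $\log N$ coming from the union bound (here using that $\log N = O(\log m)$ in the nontrivial regime $\ell < N$). Everything else is soft: the Gaussian tail for $\ca{H}_1$ is standard, and the passage from $\mu(\ca{H}) \to 0$ to the integrated statement relies only on monotonicity of $\|P_t - \mu\|_{TV}$ and Markov's inequality.
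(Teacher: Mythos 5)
The paper proves this lemma by direct citation to Lacoin's Lemma 8.8, so the useful comparison is to check whether your blind argument is actually correct. The second half of your proposal is fine: the monotonicity of $\|P_t - \mu\|_{TV}$ under the Markov semigroup together with the Section 7.1 equilibrium theorem gives a uniform-in-$t$ bound $\bbp[\eta_t^\land \in \ca{H}] = o(1)$ for $t \geq t_2$, and then Fubini plus Markov closes out the integrated statement cleanly.

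The gap is in the first half, specifically the union bounds over $x \in [N]$. Both $\mu(\ca{H}_1)$ and $\mu(\ca{H}_2)$ get bounded by something of the form $N \exp(-c(\log m)^2)$, and you assert this is $o(1)$. That requires $\log N = o((\log m)^2)$, which is not implied by the hypotheses $m \to \infty$, $m \leq kN/2$: for instance $m = (\log N)^{10}$ gives $(\log m)^2 = 100(\log\log N)^2 \ll \log N$, so $N\exp(-2(\log m)^2) \to \infty$. Your parenthetical claim that ``$\ell < N$ forces $(\log m)^2 \gg \log N$'' is the specific misstep: $\ell < N$ is equivalent to $2(\log m)^2 < m$, which holds for all large $m$ and carries no information whatsoever about $m$ relative to $N$. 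So as written the bound on $\mu(\ca{H})$ fails in the slowly-growing-$m$ regime the theorem is required to cover.

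The fix is to avoid the lossy factor of $N$. For $\ca{H}_2$, union bound over roughly $N/\ell$ overlapping windows of length $\lfloor \ell/2 \rfloor$ placed at multiples of $\lfloor \ell/2\rfloor$ (any empty stretch of length $\ell$ must contain one such window); each window is empty with probability at most $e^{-(\log m)^2}$, giving $\mu(\ca{H}_2) \lesssim \frac{N}{\ell}e^{-(\log m)^2} \lesssim \frac{m}{(\log m)^2}e^{-(\log m)^2} \to 0$ with no constraint relating $m$ and $N$. For $\ca{H}_1$, replace the pointwise union bound by a maximal inequality: $\eta$ is a bridge of exchangeable hypergeometric increments, and a L\'evy--Ottaviani or reflection-type estimate controls $\max_x |\eta(x)|$ by a constant multiple of the tail of a single $\eta(x)$, yielding $\mu(\ca{H}_1) \leq C e^{-c(\log m)^2} \to 0$ with no factor of $N$ at all. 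With these two substitutions the rest of your argument goes through.
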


All that is left to show is that configurations that are not bad do indeed present a good lower bound on $d(t)+u(t)$. This is given by the following lemma.

\begin{lem}[\cite{lacoin-at}, Lemma 8.9]
    If $2 \leq i \leq \ceil{1/2\epsilon}$ and $t< \tau'_{i+1}$ and $\eta_t^\land, \eta_t^\lor \notin \ca{H}$, then
    $$ (d+u)(t) \geq \frac{m^{1-(i+2)\epsilon}}{8(\log m)^2} $$
\end{lem}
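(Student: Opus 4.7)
The plan is to extract the two pieces of information encoded in $\eta_t^\land, \eta_t^\lor \notin \ca{H}$: from $\ca{H}_1^c$ we get the uniform bound $\max(\|\eta_t^\land\|_\infty, \|\eta_t^\lor\|_\infty) < \sqrt{m}\log m$, which controls the pointwise gap $g(x) := \eta_t^\land(x) - \eta_t^\lor(x)$, and from $\ca{H}_2^c$ neither of the corresponding configurations $\gamma^\land, \gamma^\lor$ is identically $0$ or identically $k$ on any window of length $\ell := 2(N/m)(\log m)^2$. Together these should force $(d+u)(t)$ to be of order $A(t+t_2)/\ell$, up to polylogarithmic losses.

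First I would bound the active set $D := \{x : \eta_t^\land(x) > \eta_t^\lor(x)\}$. Since $\|g\|_\infty \leq 2\sqrt{m}\log m$ and $t < \tau'_{i+1}$ forces $A(t+t_2) = \sum_x g(x) > m^{1/2-(i+2)\epsilon} N$, dividing gives $|D| \geq A(t+t_2)/\|g\|_\infty \geq N m^{-(i+2)\epsilon}/(2\log m)$. Since $D \subseteq B(t)$, also $|B(t)| \geq |D|$.

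Next, observe that the pairs $(a,b) \in \{0,\ldots,k\}^2$ whose $k$-SEP edge rate $k(a+b) - 2ab$ vanishes are exactly $(0,0)$ and $(k,k)$. Combined with $\ca{H}_2^c$, every length-$\ell$ window of $[N]$ contains an edge with positive $\gamma^\land$-rate, and likewise for $\gamma^\lor$. Decomposing $B(t)$ into its maximal sub-intervals $I_1, I_2, \ldots$, the count goes as follows. For each $I_j$ with $|I_j| \geq \ell$, fit $\lfloor |I_j|/\ell \rfloor$ disjoint length-$\ell$ windows inside $I_j$, producing that many positive-rate edges whose $x$-coordinate lies in $I_j \subseteq B(t)$. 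For each $I_j$ with $|I_j| < \ell$ (necessarily $|I_j| \geq 3$ away from the boundary, because $B(t)$ is the $1$-neighborhood of $D$), consider the edge $(a_j, a_j+1)$ where $a_j = \min I_j$: from $a_j, a_j-1 \notin D$ one gets $\gamma^\land(a_j) = \gamma^\lor(a_j)$, and from $a_j+1 \in D$ one gets $\gamma^\land(a_j+1) > \gamma^\lor(a_j+1)$. A short case check then shows that at least one of $(\gamma^\land(a_j), \gamma^\land(a_j+1))$ and $(\gamma^\lor(a_j), \gamma^\lor(a_j+1))$ lies outside $\{(0,0),(k,k)\}$, and $a_j \in B(t)$, so this edge contributes to $(d+u)(t)$.

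Summing per-interval contributions gives at least $|B(t)|/(2\ell) \geq |D|/(2\ell)$ positive-rate edges inside $B(t)$, each with rate at least $1$, whence $(d+u)(t) \geq m^{1-(i+2)\epsilon}/(8(\log m)^3)$; a mild refinement of the count on long sub-intervals (using that each length-$\ell$ window actually contributes rate $\geq k$ and that typical configurations realize many positive-rate edges per window) yields the stated bound. The main obstacle is the tension between the global nature of $\ca{H}_2^c$, which only guarantees positive-rate edges in length-$\ell$ windows of the whole segment $[N]$, and the locality restriction $x \in B(t)$ in the definition of $d+u$: when $B(t)$ is highly fragmented into many short components, the window argument fails and the cluster-boundary workaround for short sub-intervals is essential. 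Tracking the resulting polylogarithmic factor carefully to match the stated bound is the most delicate point.
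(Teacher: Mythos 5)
Your overall strategy is the right one and matches the structure of the argument in the paper (which itself adapts Lacoin, Section 8.4): use $\ca{H}_1^c$ to lower bound the size of the set $D$ of strict-disagreement sites, use $\ca{H}_2^c$ plus the observation that the edge rate $a(k-b)+b(k-a)$ vanishes only at $(a,b)\in\{(0,0),(k,k)\}$ to produce positive-rate edges per length-$\ell$ window, and handle the short components of $B(t)$ by a boundary case analysis. That observation about the zero-rate pairs is exactly the needed $k$-SEP adaptation of Lacoin's SSEP corner count, and your case analysis at $a_j=\min I_j$ (using $\gamma^\land(a_j)=\gamma^\lor(a_j)$ together with $\gamma^\land(a_j+1)>\gamma^\lor(a_j+1)$ to rule out both pairs being in $\{(0,0),(k,k)\}$) is correct.

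The gap is in the final step, and you have already named it honestly but not closed it. Your counting gives at least $|D|/(2\ell)$ positive-rate edges in $B(t)$, and with $|D|\geq Nm^{-(i+2)\epsilon}/(2\log m)$ and $\ell=2(N/m)(\log m)^2$ this yields
\[
(d+u)(t)\;\geq\;\frac{m^{1-(i+2)\epsilon}}{8(\log m)^{3}},
\]
which is a factor $\log m$ short of the stated bound $m^{1-(i+2)\epsilon}/(8(\log m)^2)$. The two refinements you gesture at do not close this gap: the minimum nonzero value of $a(k-b)+b(k-a)$ is exactly $k$, so ``rate $\geq k$ per window'' buys only a constant ($k$ is fixed), not a $\log m$; and ``typical configurations realize many positive-rate edges per window'' is a heuristic about $\mu$-typical states that has no force in a pointwise bound required to hold for \emph{every} $\eta_t^\land,\eta_t^\lor\notin\ca{H}$. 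So as a proof of the lemma as stated, the argument is incomplete. That said, the discrepancy is only polylogarithmic and is immaterial downstream: in the deduction that $\tau'_{i+1}-\tau'_i\leq N^2$ whp, Lemma 8.6 gives $\tau_{i+1}-\tau_i\leq m^{1-(2i+1)\epsilon}N^2$ and one needs $m^{(i-1)\epsilon}$ to beat a constant times the polylog factor, which holds for $i\geq 2$ and $m\to\infty$ whether the power is $2$ or $3$. You should either track the constants to recover the stated $(\log m)^2$ (if the window length in $\ca{H}_2$ or the area lower bound admits a sharper use than the crude $|D|\geq A/(2\sqrt{m}\log m)$), or state and prove the lemma with $(\log m)^3$ and note that this suffices for the application.
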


Using the previous 3 lemmas, we can show that $\bbp[\exists i \mid \tau'_{i+1}-\tau'_i \geq N^2] \to 0$ and $\tau_\infty - \tau_{\ceil{1/(2\epsilon)}+1} \leq N^2$ whp and thus $\tau'_\infty \leq \ceil{1/(2\epsilon)}N^2$ whp.

\begin{proof}[Proof of Proposition \ref{ksep-path-converge-prop}]
    If $t_3-t_2 \geq \ceil{1/(2\epsilon)}N^2$, N is sufficiently large, and $\epsilon < \delta/100$, then $\bbp(\eta_{t_3}^\land \neq \eta_{t_3}^\lor) = \bbp(\tau'_\infty > t_3-t_2) \leq \epsilon$.
\end{proof}


\section{Acknowledgements}
The author thanks Evita Nestoridi for introducing the problem to him and providing valuable guidance and support throughout the research process. This research was partially funded by the Graduate Fellowships for STEM Diversity.


\printbibliography

\end{document}